\let\optbf\bf 
\let\optbf\null
\numberwithin{equation}{section}
\newtheorem{thm}{Theorem}[section]
\newtheorem{lemma}[thm]{Lemma}
\newtheorem{prop}[thm]{Proposition}
\newtheorem{cor}[thm]{Corollary}
{\theorembodyfont{\rmfamily}
\newtheorem{defn}[thm]{Definition}

\newtheorem{rmk}[thm]{Remark}
\newtheorem{notation}[thm]{Notation}
}
\newcommand{\qed}{\hfill \mbox{\raggedright \rule{.07in}{.1in}}}
\newenvironment{proof}{\vspace{1ex}\noindent{\bf
Proof.}\hspace{0.5em}}{\hfill\qed\vspace{1ex}}
\newenvironment{pfof}[1]{\vspace{1ex}\noindent{\bf Proof of
#1.}\hspace{0.5em}}{\hfill\qed\vspace{1ex}}
\newcommand{\alphaupperbound}{\frac{1}{9}}
\newcommand{\bbb}{\beta} 
\newcommand{\one}{\ensuremath{\mathbf 1}}
\renewcommand{\H}{{\bold{H}}}
\let\phi\varphi
\let\epsilon\varepsilon
\let\tilde\widetilde
\newcommand{\cone}{\ensuremath{{\mathcal C}_2}}
\newcommand{\eval}[1]{\downharpoonright_{#1}}
\newcommand{\Var}{\operatorname{Var}}
\newcommand{\mean}[2]{\left[#2\right]_{#1}}
\newcommand{\Sprindzuk}{Sprindzuk's }
\newcommand{\R}{{\mathbb R}}
\newcommand{\N}{{\mathbb N}}
\newcommand{\Lip}{\operatorname{Lip}}
\def\a{\alpha}
\def\b{\beta}
\def\R{\ensuremath{\mathbb R}}
\def\N{\ensuremath{\mathbb N}}
\def\B{\ensuremath{\mathcal B}}
\def\P{\ensuremath{\mathcal P}}
\def\E{\mathbb E}
\def\TF{\mathcal{T}}
\begin{document}

\bibliographystyle{plain}

\title {Central limit theorems for sequential and random  intermittent dynamical systems.}

\author{Matthew Nicol \thanks{Department of Mathematics,
University of Houston,
Houston Texas,
USA. e-mail: $<$nicol@math.uh.edu$>$.}
\and Andrew T\"or\"ok \thanks{Department of Mathematics,
University of Houston,
Houston Texas,
USA
 and
 Institute of Mathematics of
 the Romanian Academy, P.O. Box 1--764, RO-70700
 Bucharest, Romania.
 e-mail: $<$torok@math.uh.edu$>$.}
\and Sandro Vaienti
\thanks{Aix Marseille Universit\'e, CNRS, CPT, UMR 7332, 13288 Marseille, France and
Universit\'e de Toulon, CNRS, CPT, UMR 7332, 83957 La Garde, France.
e-mail:$<$vaienti@cpt.univ-mrs.fr$>$.}}

\date{June 27, 2016}


\maketitle


\tableofcontents{}

\begin{abstract}
  We establish self-norming central limit theorems for non-stationary
  time series arising as observations on sequential maps possessing an
  indifferent fixed point. These transformations are obtained by
  perturbing the slope in the Pomeau-Manneville map. We also obtain
  quenched central limit theorems for random compositions of these
  maps.
\end{abstract}

\section{Introduction}

In a preceding series of two papers \cite{HNTV}, \cite{lom-paper}, we
considered a few statistical properties of non-stationary dynamical systems
arising by the sequential composition of (possibly) different maps. The
first article \cite{HNTV} dealt with the Almost Sure Invariance Principle
(ASIP) for the non-stationary process given by the observation along the
orbit obtained by concatenating maps chosen in a given set. We choose maps
in one and more dimensions which were piecewise expanding, more precisely
their transfer operator (Perron-Frobenius, "PF") with respect to the
Lebesgue measure was quasi-compact on a suitable Banach space.
This allows to approximate the original process with a reverse martingale
plus an error. By a recent result by Cuny and Merlev\`ede
\cite{Cuny_Merlevede}, the reverse martingale satisfies the ASIP. The error
is shown to be essentially bounded due to the presence of a spectral gap in
the PF operator on a Banach space continuously injected in $L^{\infty}$
(from now on all the $L^p$ spaces will be with respect to the ambient
Lebesgue measure $m$ and they will be denoted with $L^p$ or $L^p(m).$).
Moreover, the same spectral property allowed us to show that for expanding
maps chosen close enough, the variance $\sigma_n^2$ grows linearly, which
permits to approximate the original process almost everywhere with a finite
sum of i.i.d. Gaussian variables with the same variance.

The second paper \cite{lom-paper} considered composition of
Pomeau-Manneville like maps, obtained by perturbing the slope at the
indifferent fixed point $0.$ We got polynomial decay of correlations for
particular classes of centered observables, which could also be interpreted
as the decay of the iterates of the PF operator on functions of zero
(Lebesgue) average; this fact is also known as {\em loss of memory.} In
this situation the PF operator is not quasi-compact and although the
process given by the observation along a sequential orbit can be decomposed
again as the sum of a reverse martingale difference plus an error, apriori
the latter turns out to be bounded only in $L^1$ and this was an obstacle
to obtain an almost sure result like the ASIP by {\em only} looking at the
almost sure convergence of the reverse martingale difference. Instead one
could hope to get a (distributional) Central Limit Theorem (CLT); in this
regard a general approach to CLT for sequential dynamical systems has been
proposed and developed in \cite{Conze_Raugi}. It basically applies to
systems with a quasi-compact PF operator and it is not immediately
transposable to maps with do not admit a spectral gap. The main goal of our
paper is to prove the CLT for the sequential composition of
Pomeau-Manneville maps with varying slopes. A fundamental tool in obtaining
such a result will be the polynomial loss of memory bound obtained
in~\cite{lom-paper}; we are now going to recall it also because it will
determine the regularity of the observables to which our CLT will apply;
see Theorem~\ref{thm:decay}.

We consider the family of Pomeau-Manneville maps
\begin{equation}\label{PM} T_{\a}(x)= \begin{cases} x + 2^{\a}x^{1+\a}, \
0\le x \le 1/2\\ 2x-1, \ 1/2 \le x \le 1
  \end{cases} \qquad 0 < \a <1.
\end{equation} Actually in \cite{lom-paper} we considered a slightly
different family of this type, but pointed out that both versions could be
worked out with the same techniques (see~\cite{Aimino_thesis}), and lead to
the same result; here we prefer to use the {\em classical} version
(\ref{PM}). As in \cite{LSV}, we identify the unit interval $[0,1]$ with
the circle $S^1$, so that the maps become continuous. Given $0< \b_k \le
\a <1$, denote by $P_{\b_k}$ or $P_k$ the {\em Perron-Frobenius} operator
associated with the map $T_k= T_{\b_k}$ w.r.t. the measure $m$. For
concatenations we use equivalently the notations
\[ \TF_m^{n-m+1}:=T_{\b_{n}}\circ T_{\b_{n-1}}\circ \cdots \circ T_{\b_m}=
T_{n}\circ T_{n-1}\circ\cdots \circ T_m.
\]
\[ \P_m^{n-m+1}:=P_{\b_{n}}\circ P_{\b_{n-1}}\circ \cdots \circ P_{\b_m}=
P_{n}\circ P_{n-1}\circ\cdots \circ P_m.
\]
\[
\P^{n}:=\P_1^{n} \qquad \TF^{n}:=\TF_1^{n}
\]
where \emph{the exponent denotes the number of maps in the concatenation}.
For simplicity we use $\TF^{\infty}:= \cdots T_n\circ \cdots \circ T_1$ for
a given sequence of transformations.

The Perron-Frobenius operator $P_{k}$ associated to $T_{k}$ satisfies the
duality relation
\[
\int_M P_{k}f \ g\ dm\ = \ \int_M f\ g\circ T_{k} \ dm,\;\; \mbox{ for
  all } f\in L^1, \ g\in L^{\infty}
\]
and this is preserved under concatenation.

We next consider~\cite{LSV, lom-paper} the cone {\cone} of functions given
by (here $X(x)=x$ is the identity function):
\[
\cone:= \{f\in C^0((0,1])\cap L^1(m) \mid \ f\ge 0, \ f \
\mbox{decreasing},\ X^{\a+1}f \ \mbox{increasing}, \ f(x)\le a x^{-\a}\
m(f)\}\footnote{By "decreasing" we mean "nonincreasing".}
\]

\begin{rmk}
  Some coefficients that appear later depend on the value $a$ that defines
  the cone $\cone$; however, we will not write explicitly this dependence.
\end{rmk}

Fix $0< \a < 1$; as proven in~\cite{lom-paper}, provided $a$ is large
enough, the cone {\cone} is preserved by all operators
$P_{\b}, \ 0< \b \le \a < 1$. The following polynomial decay result holds:

\begin{thm}[\cite{lom-paper}]\label{thm:decay} Fix $0<\a<1$ and consider
  a cone $\cone$ as above. Suppose $\psi, \phi$ in $\cone$ have equal
  expectation, $\int \phi dm= \int \psi dm$. Then for any sequence
  $T_{\b_1},\cdots, T_{\b_n}$, $n \ge 1$, of maps of Pomeau-Manneville type
  \eqref{PM} with $0 < \b_k\le \a < 1$, $k\in [1,n]$, we have
  \begin{equation}\label{DC}
    \int |P_{\b_n}\circ\cdots\circ
    P_{\b_1}(\phi)-P_{\b_n}\circ\cdots\circ P_{\b_1}(\psi)|dm \le C_{\a}
    (\|\phi\|_1+\|\psi\|_1)n^{-\frac{1}{\a}+1}(\log n)^{\frac{1}{\a}},
  \end{equation}
  where the constant $C_{\a}$ depends only on the map $T_{\a}$, and
  $\|\cdot\|_1$ denotes the $L^1$ norm.

  A similar rate of decay holds for observables $\phi$ and $\psi$ that are
  $C^1$ on $[0,1]$; in this case the rate of decay has an upper bound given
  by
  \[ C_{\a} \
  \mathcal{F}(\|\phi\|_{C^1}+\|\psi\|_{C^1})n^{-\frac{1}{\a}+1}(\log
  n)^{\frac{1}{\a}}
  \] where the function $\mathcal{F}: \R\rightarrow \R$ is affine.
\end{thm}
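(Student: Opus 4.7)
The plan is to exploit the cone invariance $P_\beta \cone \subset \cone$ (uniform in $0<\beta\le\alpha$) together with the pointwise bound $f(x) \le a x^{-\alpha} m(f)$ satisfied by every $f \in \cone$. Since $\int(\phi-\psi)\,dm = 0$, the difference $h = \phi - \psi$ has mean zero, and writing $h = h^+ - h^-$ yields $\|h^\pm\|_1 \le \tfrac{1}{2}(\|\phi\|_1 + \|\psi\|_1)$; after normalization, the task reduces to bounding $\|\P_1^n \rho_1 - \P_1^n \rho_2\|_1$ for two probability densities $\rho_1, \rho_2 \in \cone$.

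First I would separate near- and far-from-fixed-point behaviour. Fix a threshold $\epsilon_n$ of order $n^{-1/\alpha}$. The pointwise cone bound shows that any $\rho \in \cone$ has mass $O(\epsilon_n^{1-\alpha}) = O(n^{-1/\alpha+1})$ on $[0,\epsilon_n]$, which already saturates the target rate up to logarithms. On the complement $[\epsilon_n, 1]$ the sequential system behaves effectively uniformly expandingly: any point in $[\epsilon_n, 1/2]$ is pushed into $(1/2,1]$ within $O(\log n)$ iterations of arbitrary $T_{\beta_k}$ with $\beta_k \le \alpha$, and once in $(1/2,1]$ the maps expand by a factor of at least $2$. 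A Lasota--Yorke-type estimate uniform in $\beta_k$ should then give exponential loss of memory for the portion of the density supported away from the fixed point. Working in windows of length comparable to $\log n$, one contracts the ``bulk'' distance exponentially per window while leaking $O(n^{-1/\alpha+1})$ of ``tail'' mass at each window; summing the contributions over the $\sim(\log n)^{1/\alpha}$ windows needed to make the exponential in $\log n$ dominate the polynomial error produces the claimed $n^{-1/\alpha+1}(\log n)^{1/\alpha}$ rate.

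For $C^1$ observables, the natural route is to write $\phi = c_1 g_1 - c_2 g_2$ as a difference of elements of $\cone$ by adding and subtracting a large multiple $M$ of a fixed reference function in $\cone$ (a scaled $x^{-\alpha}$ is convenient), with $M$ chosen affine in $\|\phi\|_{C^1}$ so that the monotonicity and the pointwise cone constraints are met; the same recipe applies to $\psi$. Linearity of $\P_1^n$ then extends the cone estimate with a prefactor affine in the $C^1$ norms, producing the affine function $\mathcal{F}$ of the statement.

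The main obstacle is the non-stationarity. In the single-map case one can invoke projective-metric contraction on $\cone$ (as in~\cite{LSV}), but along varying sequences $(\beta_k)$ no uniform contraction rate is available, and the decay must be extracted from uniform cone preservation plus the tail estimate alone. This is what motivates the time-window decomposition in place of a direct Hilbert-metric argument, and it is also the source of the parasitic $(\log n)^{1/\alpha}$ factor, which represents the price of re-synchronising the ``good'' and ``bad'' parts of the mass at every window.
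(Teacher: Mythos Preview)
This theorem is not proved in the present paper; it is quoted from~\cite{lom-paper} (note the attribution in the theorem header) and used as an input throughout. There is therefore no proof here to compare your proposal against.

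For what it is worth, your outline has the right architecture and matches the strategy of~\cite{lom-paper}: uniform cone invariance, the pointwise bound $f(x)\le a\,x^{-\alpha} m(f)$ to control mass near the indifferent fixed point, and a bulk/tail splitting at scale $\epsilon_n\sim n^{-1/\alpha}$. Your reduction of the $C^1$ case to the cone case---writing a $C^1$ observable as a difference of cone elements with coefficients affine in $\|\phi\|_{C^1}$---is exactly the mechanism used; the present paper even records this step separately as Lemma~\ref{lem:C1*Pk} and Corollary~\ref{cor:C1*Pk-applications}, confirming the affine dependence that produces~$\mathcal{F}$. The one place your sketch is genuinely soft is the claim that a ``uniform Lasota--Yorke estimate'' yields exponential loss of memory on the bulk: for non-stationary compositions of intermittent maps there is no spectral gap to fall back on, and a bare LY inequality does not by itself give exponential contraction of densities supported on $[\epsilon_n,1]$. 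In~\cite{lom-paper} this step is handled by an explicit induction tracking how mass is exchanged between a neighbourhood of $0$ and its complement under each $P_{\beta_k}$, and the $(\log n)^{1/\alpha}$ factor emerges from that bookkeeping rather than from a window count of the form you describe (which, as written, is dimensionally unclear). If you want to turn your proposal into a proof, that is the step you would have to make precise.
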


For the proof of the CLT Theorem~\ref{thm:CLT} we need better decay than in
$L^1$. In this paper we improve the above result to decay in $L^p$,
provided $\a$ is small enough.


Note that $\P^n\phi \in \cone$ if $\phi\in \cone$ and
$m(\P^n\phi)=m(\phi)$, so
\[
\left|[\P^n(\phi) - \P^n(\psi)]\eval{x} \right| \le
\left|\P^n(\phi)\eval{x} \right| + \left|\P^n(\psi)\eval{x} \right|
\le a m(\phi) x^{-\alpha} + a m(\psi) x^{-\alpha}
\]

\begin{prop}\label{prop:decay_Lp}

  Under the assumptions on Theorem~\ref{thm:decay}, if $1 \le p <
  1/\a$ then
  \begin{equation}\label{DC_Lp}
    \|P_{\b_n}\circ\cdots\circ
    P_{\b_1}(\phi)-P_{\b_n}\circ\cdots\circ P_{\b_1}(\psi)\|_{L^p(m)}\le
    C_{\a,p} (\|\phi\|_1+\|\psi\|_1)n^{1 -\frac{1}{p \alpha }}
    \left(\log n\right)^{\frac{1}{\alpha}\frac{1-\a p}{p- \a p}}
  \end{equation}
  where the constant $C_{\a,p}$ depends only on the map $T_{\a}$ and
  $p$.

  As in Theorem~\ref{thm:decay}, a similar $L^p$-decay result also holds
  for observables $\phi, \psi \in C^1([0,1])$.
\end{prop}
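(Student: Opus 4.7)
The plan is to interpolate between the $L^1$ bound of Theorem~\ref{thm:decay} and an $L^\infty$-type pointwise control coming from the cone structure, by splitting the integral over a carefully chosen cutoff $\delta$ near the indifferent fixed point $0$. Since $\phi,\psi\in\cone$, both $\P^n\phi$ and $\P^n\psi$ lie in $\cone$, so the cone definition gives $\P^n\phi(x)\le a\,m(\phi)\,x^{-\a}$ and similarly for $\psi$. Setting $f_n:=\P^n\phi-\P^n\psi$ and $C:=a(\|\phi\|_1+\|\psi\|_1)$, the inequality displayed just before the proposition reads $|f_n(x)|\le C\,x^{-\a}$ on $(0,1]$.

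For $\delta\in(0,1)$ I split $[0,1]=[0,\delta]\cup[\delta,1]$. On $[0,\delta]$ I apply the pointwise bound:
\[
\int_0^\delta |f_n|^p\,dm\le \frac{C^p}{1-\a p}\,\delta^{1-\a p},
\]
where convergence uses the hypothesis $p<1/\a$. On $[\delta,1]$ I pull out one factor of $\|f_n\|_{L^\infty([\delta,1])}\le C\delta^{-\a}$ from the pointwise bound and keep the remaining factor in $L^1$:
\[
\int_\delta^1 |f_n|^p\,dm\le (C\delta^{-\a})^{p-1}\|f_n\|_1 \le C^{p-1}\delta^{-\a(p-1)}A_n,
\]
where $A_n:=C_\a(\|\phi\|_1+\|\psi\|_1)\,n^{1-1/\a}(\log n)^{1/\a}$ is the bound provided by Theorem~\ref{thm:decay}.

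The two pieces are balanced by taking $\delta\sim (A_n/C)^{1/(1-\a)}$, which gives (up to constants depending on $\a,p$)
\[
\|f_n\|_p^p \lesssim C^{(p-1)/(1-\a)}\,A_n^{(1-\a p)/(1-\a)},
\]
and hence $\|f_n\|_p\lesssim C^{(p-1)/[p(1-\a)]}A_n^{(1-\a p)/[p(1-\a)]}$. The two exponents of $\|\phi\|_1+\|\psi\|_1$ sum to $1$; the exponent of $n$ is $(1-1/\a)(1-\a p)/[p(1-\a)]=1-1/(p\a)$; and the exponent of $\log n$ is $(1/\a)(1-\a p)/[p(1-\a)]=(1-\a p)/[\a(p-\a p)]$. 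These are precisely the exponents appearing in~\eqref{DC_Lp}. The $C^1$ statement follows by the usual trick from~\cite{lom-paper}: decompose any $C^1$ observable as an affine combination of functions in $\cone$ with coefficients controlled by $\|\cdot\|_{C^1}$, so that the pointwise estimate $|\P^n\phi(x)|\lesssim \|\phi\|_{C^1}\,x^{-\a}$ persists and the split-and-optimise argument above applies verbatim.

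The argument is essentially bookkeeping; the only mildly delicate point is to check that the optimal $\delta$ indeed lies in $(0,1)$ for $n$ large, which is automatic since $A_n\to 0$, and to absorb the finitely many initial $n$ into the constant $C_{\a,p}$. The genuine ingredients are the pointwise cone bound and the $L^1$ decay of Theorem~\ref{thm:decay}; everything else is an elementary Hölder-type interpolation.
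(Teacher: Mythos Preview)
Your proof is correct and is essentially the same as the paper's: the paper packages the split-and-optimise interpolation (pointwise cone bound on $[0,x_*]$, sup bound times $L^1$ on $[x_*,1]$, then minimise over $x_*$) as a separate Lemma~\ref{lem:L2bounds}, applied with $K=2a(\|\phi\|_1+\|\psi\|_1)$ and the $L^1$ bound from Theorem~\ref{thm:decay}, and handles the $C^1$ case via Lemma~\ref{lem:C1*Pk}, which is precisely the ``decompose a $C^1$ observable as a difference of cone functions'' trick you invoke.
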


\begin{proof}
  For functions in the cone $\cone$, Theorem~\ref{thm:decay} gives
  $L^1$-decay; then Lemma~\ref{lem:L2bounds} together with the
  preceding discussion implies $L^{p}$-decay for $\alpha$ small
  enough. Note that we use this Lemma with
  $K=2a(\|\phi\|_1+\|\psi\|_1)$ and the $L^1$-bound given by the
  Theorem, and then the coefficient in the $L^p$-bound is proportional
  to $(\|\phi\|_1+\|\psi\|_1)$ as well.

  To prove the decay for $C^1$ observables, we use Lemma~\ref{lem:C1*Pk}
  (same approach as in the proof of Theorem~\ref{thm:decay}).
\end{proof}

Note that the convergence of the quantity (\ref{DC}) implies the decay of
the non-stationary correlations with respect to $m$:
\[
\begin{aligned} \left|\int \psi \phi\circ T_{\b_n}\circ \cdots \circ
T_{\b_1} dm-\int \psi dm \ \int \phi\circ T_{\b_n}\circ \cdots \circ
T_{\b_1} dm \right| \\ \le \|\phi\|_{\infty} \left\|P_{\b_n}\circ \cdots
\circ P_{\b_1}(\psi)-P_{\b_n}\circ \cdots \circ P_{\b_1} \left(\one
\left(\int \psi dm\right)\right) \right\|_1 \end{aligned}
\] provided $\phi$ is essentially bounded and $(\int \psi dm)\one$ is
in the functional space where the convergence of (\ref{DC}) takes
place. In particular, this holds for $C^1$ observables, by
Theorem~\ref{thm:decay}.


As it is suggested by the preceding loss of memory result, centering the
observable is the good way to define the process when it is not stationary,
in order to consider limit theorems. To simplify the exposition, we
introduce the following notation:

\begin{defn}\label{def.mean}
  For $\phi:[0,1]\to \R$ sufficiently regular (often $C^1$) introduce the
  following normalization {\em along a sequential orbit}:
  \begin{equation}\label{eq.mean}
    \mean{k}{\phi}:=\phi-\int \phi( T_k \circ \cdots \circ T_1)
    dm.
  \end{equation}
  However, to simplify notation, it is convenient to set
  $\mean{0}{\phi}=0$.
\end{defn}

Conze and Raugi \cite{Conze_Raugi}
defined the sequence of transformations $\{T_1, T_2, T_3, \dots\}$ to be
{\em pointwise ergodic} whenever the law of large numbers is satisfied,
namely
\[
  \lim_{n\rightarrow \infty}\frac1n \sum_{k=1}^n\left[\phi( T_k \circ
    \cdots \circ T_1 x)-\int \phi( T_k \circ \cdots \circ T_1) dm\right]=0
  \text{ for Lebesgue-a.e. $x$.}
\]
We will prove in Theorem \ref{thm:SBC} that such a law of large numbers
holds for our observations provided $0<\alpha<1.$ It is therefore natural
to ask about a non-stationary Central Limit Theorem for the sums
\begin{equation}\label{eq:defn-S_n}
  S_n:=\sum_{k=1}^{n}\mean{k}{\phi} \circ T_{k}\circ \cdots \circ T_{1}
\end{equation}
for a given sequence
$\TF^{\infty}:= \cdots \circ T_n\circ \cdots \circ T_1:$ this will be the
content of the next sections.


To be more specific we will prove in Theorem~\ref{thm:CLT} a non-stationary
central limit theorem similar to that proved by Conze and
Raugi~\cite{Conze_Raugi} for (piecewise expanding) sequential systems:
\begin{equation}\label{cltt}
\frac{S_n}{\sqrt{\Var(S_n)}} \to^{d} \mathcal{N}(0,1).
\end{equation}

At this point, we would like to make a few comments about our result
compared to that of Conze and Raugi. Theorem 5.1 in \cite{Conze_Raugi}
shows that, when applied to the quantities defined above and for
classes of maps enjoying a quasi-compact transfer operator:
\begin{enumerate}
\item[(1)] If the norms $||S_n||_2$ are bounded, then the sequence
  $S_n, n\ge 1$ is bounded.
\item[(2)] If $||S_n||_2\rightarrow \infty$, then \eqref{cltt} holds.
\end{enumerate}
We are not able to prove item (1) for the intermittent map following
the same approach as in \cite{Conze_Raugi}, since it uses the uniform
boundedness of the sequence $\H_n\circ \mathcal{T}^k$, where the
function $\H_n$ is defined in (\ref{eq:h_n}) and is just the error in
the martingale approximation as we discussed above. We can only prove
that $\H_n$ is bounded uniformly in $n$ on each set of the form $[a,
1), a>0$, and do not expect it to be bounded near 0 (look at the
stationary case).

Instead, our central limit theorem will satisfy item (2) under the
assumption that the variance $||S_n||_2^2$ grows at a certain rate and for some limitation on the range of values of $\alpha$.  It
seems difficult to get such a result in full generality for the
intermittent map considered here. Conze and Raugi proved the linear
growth of the variance in their Theorem 5.3 under a certain number of
assumptions, including the presence of a spectral gap for the transfer
operator. We showed in our paper \cite{HNTV} that those assumptions
apply to several classes of expanding maps even in higher
dimensions.

However, for concatenations given by the {\em same} intermittent map $T_\a$
with $\alpha<1/2,$ the variance is linear in $n$, provided the observable
is not a coboundary for $T_\a$. In section~\ref{sec:nearby-maps} we prove
that the linear growth of the variance still holds if we take maps
$T_{\beta_n}$ with $\beta_n$ arbitrary but close to a fixed $\bbb$, and an
observable is not a coboundary for $T_\bbb$; therefore, the CLT holds. See
Theorem~\ref{thm:linear-variance}. Our proof of
Theorem~\ref{thm:linear-variance} uses an estimate of interesting related
work of Lepp{\"a}nen and Stenlund~\cite{Leppanen-Stenlund}, which we learnt
about after a first version of this paper was completed. Their result
allowed us to give another example where variance grows linearly for a
sequential dynamical system of intermittent type maps, and hence the
non-stationary CLT holds. The focus of~\cite{Leppanen-Stenlund} is however
more on the strong law of large numbers and convergence in probability
rather than the CLT. They also consider quasi static systems, introduced
in~\cite{Leppanen-Stenlund2}.

In section~\ref{sec:quenched} we show that the variance grows linearly for
almost all sequences when we compose intermittent maps chosen from a finite
set and we take them according to a fixed probability distribution. This
means that for almost all sequences (with respect to the induced Bernoulli
measure) of maps, the central limit theorem holds (a {\em quenched} CLT).
See Theorem~\ref{thm:CLTquenched}.

\begin{rmk}\label{rmk:no-log-factor}
  For simplicity, in many of the following statements we will use as rate
  of decay $n^{-\frac{1}{\a}+1}$, ignoring the $\log n$-factor. This is
  correct if we take for $\alpha$ a slightly larger value (and is actually
  the correct rate of decay for the stationary case).
\end{rmk}

\begin{notation}
  For any sequences of numbers $\{a_n\}$ and $\{b_n\}$, we will write
  $a_n\approx b_n$ if $c_1b_n\le a_n\le c_2b_n$ for some constants
  $c_2\ge c_1>0$ and $n \gg 1$; similarly, use
  $a_n\gtrsim b_n$ for a one sided asymptotic relation.
\end{notation}


\section{Cones and Martingales}

In order to get the right martingale representation, we begin by recalling
a few formulas concerning the transfer operator; the conditional
expectation is considered with respect to the measure $m$, and $\B$ denotes
the Borel $\sigma$-algebra on $[0,1]$. We have:
%
\[
\E[\phi \mid \TF^{-k}\B]= \frac{\P^k(\phi)}{\P^k(\one)} \circ \TF^k
\]
\[
P(\phi\circ T \cdot \psi)=\phi \cdot P(\psi)
\]
and therefore, for $0\le \ell\le k$
\[
\E[\phi\circ \TF^\ell \mid \TF^{-k}\B]=
\frac{\P_{\ell+1}^{k-\ell}(\phi \cdot \P^\ell(\one))}{\P^k(\one)}
\circ \TF^k.
\]
Recall that for $L^2(m)$-functions these conditional expectations are
the orthogonal projections in $L^2(m)$.

We denote, as in Definition~\ref{def.mean}, $\phi -m(\phi\circ\TF^j)$ by
$\mean{j}{\phi}$, with the convention that $\mean{0}{\phi}=0$. Therefore we
have for the centered sum (\ref{eq:defn-S_n}):
$S_n=\sum_{k=1}^n \mean{k}{\phi}\circ \TF^k=\sum_{k=0}^n \mean{k}{\phi}\circ \TF^k$.

Introduce
\[
\H_n\circ \TF^n :=\E(S_{n-1} \mid \TF^{-n} \B).
\]
Hence $\H_1=0$, and the explicit formula for $\H_n$ is
\begin{equation}\label{eq:h_n}
\H_n =\frac{1}{\P^n \one} \left[P_n (\mean{n-1}{\phi} \P^{n-1} \one) +P_n P_{n-1}
  (\mean{n-2}{\phi} \P^{n-2} \one)+ \dots + P_n P_{n-1} \dots P_1 (\mean{0}{\phi} \P^0
  \one)\right].
\end{equation}
It is not hard to check that setting
\[
S_n=M_n + \H_{n+1}\circ \TF^{n+1}
\]
the sequence $\{M_n\}$ is a reverse martingale for the decreasing
filtration $\{\B_n:=\TF^{-n}\B\}$:
\[
\E(M_n \mid \B_{n+1})=0.
\]
In particular,
\begin{equation}
  \label{eq:psi_n}
  M_n-M_{n-1} = \psi_n \circ \TF^n \text{\quad with \quad} \psi_n :=\mean{n}{\phi}
  +\H_n-\H_{n+1} \circ T_{n+1}.
\end{equation}

We recall three lemmas from~\cite{BC-CLT}, stated in the current context:

\begin{lemma}[{{\cite[Lemma 2.6]{BC-CLT}}}]\label{lem:c}
\[
\sigma_n^2:=\E[(\sum_{i=1}^n \mean{i}{\phi}\circ
\TF^i)^2]=\sum_{i=1}^n\E[\psi_i^2\circ \TF^i] - \int \H_1^2 +\int
\H_{n+1}^2\circ \TF^{n+1}
\]
(and $\H_1=0$).
\end{lemma}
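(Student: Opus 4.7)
The plan is to carry out a standard reverse martingale variance computation, exploiting the two facts that make the martingale decomposition work: first, $M_n = S_n - \H_{n+1}\circ \TF^{n+1}$ is the residual after projecting $S_n$ onto the $\sigma$-algebra $\B_{n+1}$, and second, the increments $\psi_i \circ \TF^i$ are pairwise orthogonal. I then assemble the two pieces.

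First I would establish the Pythagorean split
\[
\E[S_n^2] \;=\; \E[M_n^2] \;+\; \int \H_{n+1}^2\circ \TF^{n+1}\,dm.
\]
The identity $\H_{n+1}\circ \TF^{n+1}=\E[S_n\mid \B_{n+1}]$ (read off from the formula for $\H_n$, using $\E[\phi\mid\B_k]=\P^k\phi/\P^k\one\circ \TF^k$) shows that $M_n = S_n - \E[S_n\mid \B_{n+1}]$. Hence $M_n$ is orthogonal in $L^2(m)$ to every $\B_{n+1}$-measurable function, in particular to $\H_{n+1}\circ \TF^{n+1}$ itself; squaring $S_n = M_n + \H_{n+1}\circ \TF^{n+1}$ and integrating gives the split above.

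Next I would show that the increments $D_i := M_i - M_{i-1} = \psi_i \circ \TF^i$ are orthogonal, so that $\E[M_n^2]=\sum_{i=1}^n \E[\psi_i^2\circ \TF^i]$. The key observation is that $\E[D_i\mid \B_{i+1}]=0$: indeed the defining property $\E[M_k\mid \B_{k+1}]=0$ applied at $k=i$ gives $\E[M_i\mid \B_{i+1}]=0$, and applied at $k=i-1$ together with the tower property (using $\B_{i+1}\subset \B_i$) gives $\E[M_{i-1}\mid \B_{i+1}]=0$. For any $j>i$ the increment $D_j$ is $\B_j$-measurable, and since the filtration is decreasing we have $\B_j\subset \B_{i+1}$, so
\[
\E[D_iD_j] \;=\; \E\bigl[D_j\,\E[D_i\mid \B_{i+1}]\bigr] \;=\; 0.
\]
Expanding $M_n^2=(\sum_iD_i)^2$ and using this orthogonality yields $\E[M_n^2]=\sum_{i=1}^n\E[D_i^2]$.

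Combining the two steps gives
\[
\sigma_n^2 \;=\; \sum_{i=1}^n \E[\psi_i^2\circ \TF^i] \;+\; \int \H_{n+1}^2\circ \TF^{n+1}\,dm,
\]
and since $\H_1=0$ one may trivially subtract $\int \H_1^2$ to match the stated form. There is no real obstacle here beyond bookkeeping; the only delicate point is confirming that $\E[M_{i-1}\mid \B_{i+1}]$ really does vanish, which requires the elementary but easy-to-overlook remark that $\B_{i+1}\subset \B_i$ so that the tower property reduces $\E[M_{i-1}\mid \B_{i+1}]$ to $\E[\E[M_{i-1}\mid \B_i]\mid \B_{i+1}]=0$, rather than trying to apply the hypothesis $\E[M_k\mid \B_{k+1}]=0$ with the wrong index.
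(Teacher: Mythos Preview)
Your proof is correct. The paper does not actually give its own argument for this lemma: it simply cites \cite[Lemma~2.6]{BC-CLT} and remarks that one should replace their $\omega_n$ by the present $\H_n$. Your self-contained computation---the Pythagorean split $\E[S_n^2]=\E[M_n^2]+\int \H_{n+1}^2\circ\TF^{n+1}$ from $M_n=S_n-\E[S_n\mid\B_{n+1}]$, followed by orthogonality of the reverse-martingale increments $\psi_i\circ\TF^i$ via $\E[D_i\mid\B_{i+1}]=0$---is exactly the standard argument behind that cited lemma, so there is no genuine difference in approach, only that you have spelled it out.
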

To prove this Lemma we replace our $\H_n$ with $\omega_n$ in \cite{BC-CLT}.

\begin{lemma}[{{\cite[proof of Lemma
      3.3]{BC-CLT}}}] \label{lem:peligrad-computation}
  Let $\H_j^{\epsilon}=\H_j \one_{\{|\H_j|\le \epsilon \sigma_n\}}$, where for
  simplicity of notation we have left out the dependence on $n$. Then
\[
\int\left(\sum_{j=1}^n \psi_j \circ \TF^j \cdot \H_{j+1}^{\epsilon}\circ
  \TF^{j+1}\right)^2 =\sum_{j=1}^n\int \left(\psi_j \circ \TF^j \cdot
  \H_{j+1}^{\epsilon}\circ \TF^{j+1}\right)^2
\]
\end{lemma}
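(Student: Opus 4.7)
The plan is to expand the square on the left and reduce the identity to showing that all off-diagonal cross terms integrate to zero, using a reverse-martingale orthogonality coming from $\E[M_n \mid \B_{n+1}] = 0$.

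First I would record the key consequence of the hypothesis on $M_n$. Since $\psi_j\circ\TF^j = M_j - M_{j-1}$ and the filtration is decreasing, $\B_{j+1}\subset \B_j$, so by the tower property
\[
\E[\psi_j\circ\TF^j \mid \B_{j+1}] = \E[M_j\mid\B_{j+1}] - \E\bigl[\E[M_{j-1}\mid\B_j]\mid\B_{j+1}\bigr] = 0.
\]
Thus $\psi_j\circ\TF^j$ is a reverse martingale difference with respect to $\B_{j+1}$.

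Next, set $a_j := (\psi_j\circ\TF^j)\cdot(\H_{j+1}^{\epsilon}\circ\TF^{j+1})$. Expanding the square, the identity to be proved reduces to $\int a_j a_k\, dm = 0$ whenever $j<k$. The key point is the direction of the filtration: since $\B_n$ is decreasing, one has $\B_{k+1}\subset \B_k\subset \B_{j+1}$ for $j<k$. Consequently $\psi_k\circ\TF^k$ (which is $\B_k$-measurable) and $\H_{k+1}^{\epsilon}\circ\TF^{k+1}$ (which is $\B_{k+1}$-measurable, as $\H_{k+1}^\epsilon = \H_{k+1}\one_{\{|\H_{k+1}|\le\epsilon\sigma_n\}}$ is a Borel function of $\H_{k+1}$) are \emph{both} $\B_{j+1}$-measurable, and so is $\H_{j+1}^{\epsilon}\circ\TF^{j+1}$. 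Pulling these three factors out of the conditional expectation gives
\[
\E[a_j a_k \mid \B_{j+1}] = (\H_{j+1}^{\epsilon}\circ\TF^{j+1})(\psi_k\circ\TF^k)(\H_{k+1}^{\epsilon}\circ\TF^{k+1})\cdot\E[\psi_j\circ\TF^j\mid\B_{j+1}] = 0,
\]
so $\int a_j a_k\, dm = 0$ and only the diagonal survives.

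The main point to get right is just the direction of the filtration: because $\B_n$ shrinks in $n$, one conditions on $\B_{j+1}$ (the \emph{larger} of the relevant $\sigma$-algebras) rather than on $\B_{k+1}$; then the factors with the higher indices are automatically adapted, which is exactly what lets the standard martingale-orthogonality argument go through in this reverse setting. Apart from this bookkeeping, the proof is just the computation above.
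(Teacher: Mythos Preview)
Your argument is correct and is exactly the standard reverse-martingale orthogonality computation one expects here: for $j<k$ the factors $\H_{j+1}^\epsilon\circ\TF^{j+1}$, $\psi_k\circ\TF^k$, $\H_{k+1}^\epsilon\circ\TF^{k+1}$ are $\B_{j+1}$-measurable (and the truncated $\H^\epsilon$'s are bounded, so the product is integrable), so the cross term vanishes after conditioning on $\B_{j+1}$. The paper does not supply its own proof of this lemma but simply cites \cite[proof of Lemma~3.3]{BC-CLT}; your write-up is precisely the argument behind that citation.
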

The last formula in the proof of {\cite[Lemma 2.6]{BC-CLT}} gives:
\begin{lemma}\label{lem:e}
\[
\sigma_n^2=\sum_{i=1}^n \E [ \mean{i}{\phi}^2\circ \TF^i ] + 2\sum_{i=1}^n \E
[ (\H_i\mean{i}{\phi}) \circ \TF^i ]
\]

\end{lemma}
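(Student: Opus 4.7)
The plan is to prove the identity by induction on $n$, exploiting the recursion $S_n = S_{n-1} + \mean{n}{\phi}\circ\TF^n$ together with the defining relation $\H_n \circ \TF^n = \E[S_{n-1} \mid \B_n]$ recorded just before~\eqref{eq:h_n}. The base case $n=1$ is immediate: since $\H_1=0$, both sides reduce to $\E[\mean{1}{\phi}^2\circ\TF^1]$ (and for $n=0$ everything is zero).

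For the inductive step, I would square the recursion to obtain
\[
S_n^2 = S_{n-1}^2 + 2\,S_{n-1}\cdot (\mean{n}{\phi}\circ\TF^n) + (\mean{n}{\phi}\circ\TF^n)^2,
\]
and then take expectations. For the cross term I would use that $\mean{n}{\phi}\circ\TF^n$ is $\B_n$-measurable (recall $\B_n=\TF^{-n}\B$), so by the tower property
\[
\E\bigl[S_{n-1}\cdot (\mean{n}{\phi}\circ\TF^n)\bigr] = \E\bigl[(\mean{n}{\phi}\circ\TF^n)\cdot \E[S_{n-1}\mid\B_n]\bigr] = \E\bigl[(\H_n\,\mean{n}{\phi})\circ\TF^n\bigr].
\]
The variances therefore satisfy
\[
\sigma_n^2 = \sigma_{n-1}^2 + \E[\mean{n}{\phi}^2\circ\TF^n] + 2\,\E\bigl[(\H_n\,\mean{n}{\phi})\circ\TF^n\bigr],
\]
and the inductive hypothesis closes the argument.

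I do not expect any significant obstacle; the whole computation rests on the conditional-expectation characterization of $\H_n$ and on the fact that functions of the form $f\circ\TF^n$ are automatically $\B_n$-measurable. Conceptually, Lemma~\ref{lem:e} is a repackaging of Lemma~\ref{lem:c}: whereas the latter assembles $\sigma_n^2$ from the martingale-difference squares $\psi_j^2$ together with the boundary term $\int\H_{n+1}^2\circ\TF^{n+1}$, Lemma~\ref{lem:e} instead absorbs all the cross-correlations $\E[(\mean{i}{\phi}\circ\TF^i)(\mean{j}{\phi}\circ\TF^j)]$ (for $j<i$) into the single correction $2\sum_i \E[(\H_i\,\mean{i}{\phi})\circ\TF^i]$. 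This alternative form is typically the more convenient of the two when bounding $\sigma_n^2$, since the centered observables $\mean{i}{\phi}$ are much more concrete objects than the martingale differences $\psi_i$.
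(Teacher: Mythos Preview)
Your inductive argument is correct and self-contained: the key step---replacing $S_{n-1}$ by its conditional expectation $\H_n\circ\TF^n$ in the cross term---is exactly what the definition of $\H_n$ permits, and the rest is bookkeeping.

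The paper does not actually prove this lemma; it simply cites it as ``the last formula in the proof of \cite[Lemma 2.6]{BC-CLT}''. In that reference the identity emerges as a by-product of the computation establishing Lemma~\ref{lem:c}: one expands $\psi_j^2$ via $\psi_j=\mean{j}{\phi}+\H_j-\H_{j+1}\circ T_{j+1}$ (the same expansion appearing later in the proof of Theorem~\ref{thm:CLT} here), sums, and uses the reverse-martingale property of $\psi_j\circ\TF^j$ to kill the cross terms $\E[\psi_j\circ\TF^j\cdot\H_{j+1}\circ\TF^{j+1}]$; combining with Lemma~\ref{lem:c} then yields Lemma~\ref{lem:e}. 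Your route is genuinely different and more economical: you never introduce $\psi_j$ at all, working only with $S_{n-1}$ and its projection onto $\B_n$. What the paper's (cited) approach buys is that Lemmas~\ref{lem:c} and~\ref{lem:e} fall out of one and the same calculation; what your approach buys is a direct, two-line proof that makes the role of $\H_n$ as a ``packaged past'' completely transparent.
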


 The following Lemma plays a crucial role all along this paper. In a
 slightly different form it was introduced and used in
 \cite[Sect. 4]{LSV}, without a proof, and subsequently in
 \cite{lom-paper}. We now give a detailed proof in a more general
 setting.
\begin{lemma} 
  \label{lem:C1*Pk}
  Assume given a $C^1$-function $\phi:[0,1]\to \R$ and $h\in
  \cone$. where the cone $\cone$ is defined with $a>1$.

  Denote by $X$ the function $X(x)=x$. If
  \begin{eqnarray*}
    \label{eq:into-the-cone-conditions}
    \lambda & \le & -|\phi'|_\infty \\
    \nu & \ge & - |\phi+\lambda X|_\infty
    \\
    \delta & \ge & \frac{a}{\alpha+1} \big(|\phi'|_\infty+ |\lambda|\big) m(h)\\
    \delta & \ge & \frac{a}{a-1} |\phi+\lambda X + \nu|_\infty m(h)
  \end{eqnarray*}
  then
  \begin{equation}\nonumber
    (\phi +\lambda X + \nu) h  + \delta \in \cone.
  \end{equation}
\end{lemma}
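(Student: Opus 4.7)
The plan is to verify, in turn, the four defining properties of $\cone$ for the candidate function $g := F h + \delta$, where I set $F := \phi + \lambda X + \nu$ and (for later use) $H := X^{\alpha+1} h$; membership of $h$ in $\cone$ gives $H$ nondecreasing and the sup bound $H(x) \le a x \, m(h)$ (from $h(x) \le a x^{-\alpha} m(h)$). The first hypothesis $\lambda \le -|\phi'|_\infty$ forces $F' = \phi' + \lambda \le 0$, so $F$ is nonincreasing; combined with the second hypothesis $\nu \ge -|\phi + \lambda X|_\infty$, this yields $F(x) \ge -|\phi + \lambda X|_\infty + \nu \ge 0$. Since $F, h \ge 0$, $\delta \ge 0$ (the right-hand sides of hypotheses~3--4 are nonnegative), and $F, h$ are both nonincreasing, the product $Fh$ is nonincreasing; hence $g \ge 0$ and $g$ is nonincreasing, which are the first two cone properties.

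For the monotonicity of $X^{\alpha+1} g = F H + \delta X^{\alpha+1}$, I would fix $0 < x \le y \le 1$ and decompose
\[
y^{\alpha+1} g(y) - x^{\alpha+1} g(x) = F(y)\bigl[H(y) - H(x)\bigr] + H(x)\bigl[F(y) - F(x)\bigr] + \delta\bigl(y^{\alpha+1} - x^{\alpha+1}\bigr).
\]
The first piece is $\ge 0$ since $F \ge 0$ and $H$ is nondecreasing. For the remaining two I would apply $H(x) \le a x \, m(h)$, the Lipschitz estimate $F(y) - F(x) \ge -(|\phi'|_\infty + |\lambda|)(y-x)$, and the convexity bound $y^{\alpha+1} - x^{\alpha+1} \ge (\alpha+1) x^{\alpha}(y-x)$, reducing the required positivity to the pointwise inequality $(\alpha+1) \delta x^{\alpha} \ge a x \, m(h)(|\phi'|_\infty + |\lambda|)$, equivalently $\delta \ge \tfrac{a}{\alpha+1}(|\phi'|_\infty+|\lambda|) m(h) \, x^{1-\alpha}$. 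Because $0 < \alpha < 1$ and $x \le 1$ force $x^{1-\alpha} \le 1$, this is ensured by hypothesis~3.

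For the last cone property $g(x) \le a x^{-\alpha} m(g)$, I would combine $F(x) \le |\phi + \lambda X + \nu|_\infty$ and $h(x) \le a x^{-\alpha} m(h)$ to obtain $g(x) \le a x^{-\alpha} |\phi + \lambda X + \nu|_\infty m(h) + \delta$, and use $m(g) = m(Fh) + \delta \ge \delta$ to get $a x^{-\alpha} m(g) \ge a x^{-\alpha} \delta$. The desired inequality then rearranges to $\delta\,(a x^{-\alpha} - 1) \ge a x^{-\alpha} |\phi + \lambda X + \nu|_\infty m(h)$; since $u \mapsto u/(u-1)$ is decreasing for $u > 1$, the worst case over $x \in (0,1]$ is $x = 1$ (giving $u = a$), which produces exactly hypothesis~4. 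The principal obstacle is the increment argument for $X^{\alpha+1} g$: all three features of $h \in \cone$ (nonnegativity, monotonicity of $H$, and the sup bound $H \le a X m(h)$) must be brought to bear simultaneously against the fact that $F$ itself is decreasing, and the role of the constant $\delta$ is precisely to absorb the one negative contribution $H(x)[F(y)-F(x)]$; the other three cone properties reduce to clean signed-monotonicity checks and a single one-variable optimization.
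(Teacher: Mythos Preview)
Your proof is correct and follows essentially the same approach as the paper's: verify each cone condition in turn, using that $F=\phi+\lambda X+\nu$ is nonnegative and nonincreasing, the Lipschitz bound on $F$, the convexity estimate $y^{\alpha+1}-x^{\alpha+1}\ge(\alpha+1)x^{\alpha}(y-x)$, and the cone bound $h(x)x^{\alpha}\le a\,m(h)$. Your additive decomposition $F(y)[H(y)-H(x)]+H(x)[F(y)-F(x)]+\delta(y^{\alpha+1}-x^{\alpha+1})$ for the $X^{\alpha+1}g$-increment is a slightly cleaner organization than the paper's (which divides through by $h(x)x^{\alpha+1}$ and uses $h(y)y^{\alpha+1}/h(x)x^{\alpha+1}\ge1$), and for the last condition you lower-bound $m(g)\ge\delta$ whereas the paper keeps the $\inf F\cdot m(h)$ term, but these are cosmetic differences leading to the same sufficient conditions.
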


\begin{rmk}
  It follows immediately that if $\phi\in C^1([0,1])$ and $h\in \cone$ then
  we can use Theorem~\ref{thm:decay} and Proposition~\ref{prop:decay_Lp} to
  obtain decay of $\P^\ell(\phi h-m(\phi h))$: consider
  $\Phi:=(\phi + \lambda X + \nu )h + \delta$,
  $\Psi:= (\lambda X + \nu )h + \delta + m(\phi h)$, with constants chosen
  according to Lemma~\ref{lem:C1*Pk} so that $\Phi , \Psi \in \cone$ (by
  definition, $m(\Phi)=m(\Psi)$), and write
  \[
  \P^\ell \big(\phi\cdot h -m(\phi\cdot h)\big)=\P^\ell(\Phi-\Psi).
  \]
\end{rmk}



\begin{cor}\label{cor:C1*Pk-applications}

\let\newphi\omega

  In particular, for a sequence $\newphi_k\in C^1([0,1])$ with
  $\|\newphi_k\|_{C^1}\le K$ and $h_k\in \cone$ with $m(h_k)\le M$ (e.g,
  $h_k:=\P^k(\one)$), one can choose constants $\lambda$, $\nu$ and
  $\delta$ so that
\begin{equation}\nonumber
  (\newphi_k +\lambda X + \nu) h_k   + \delta,
  (\lambda X + \nu) h_k   + \delta + m(\newphi_k h_k)\in \cone
  \text {\quad for all $k\ge 1$}
\end{equation}
and therefore
\begin{equation}\nonumber
  ||\P^n\big(\newphi_k h_k-m(\newphi_k h_k)\big)||_1\le C_{\a,K,M} \;
  n^{-\frac{1}{\a}+1}(\log n)^{\frac{1}{\a}}
  \text{\quad for all $n\ge 1$, $k\ge 1$},
\end{equation}
where the constant $C_{\a,K,M}$ has an explicit expression in terms of
$\a, K$ and $M.$ Decay in $L^p$ now follows from
Lemma~\ref{lem:L2bounds}: if $1\le p < 1/\alpha$ then
\begin{equation}\nonumber
  ||\P^n\big(\newphi_k h_k-m(\newphi_k h_k)\big)||_p\le C_{\alpha, K, M, p} \;
  n^{-\frac{1}{p\alpha}+1}
  \text{\quad for all $n\ge 1$, $k\ge 1$}
\end{equation}
(ignoring the $\log$-correction, see Remark~\ref{rmk:no-log-factor})
where the constant on the right hand side depends now upon $p$ too.
\end{cor}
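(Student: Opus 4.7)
The plan is to invoke Lemma~\ref{lem:C1*Pk} separately for each $k$ with $\phi=\omega_k$ and $h=h_k$, and to verify that the constants $\lambda$, $\nu$, $\delta$ needed there can be chosen \emph{independently} of $k$ using only the hypotheses $\|\omega_k\|_{C^1}\le K$ and $m(h_k)\le M$. Since these hypotheses give $|\omega_k|_\infty\le K$ and $|\omega_k'|_\infty\le K$, the choices
\[
\lambda := -K, \qquad \nu := 2K, \qquad \delta := \max\left\{\frac{2aKM}{\alpha+1},\ \frac{4aKM}{a-1}\right\} + KM
\]
satisfy the inequalities of Lemma~\ref{lem:C1*Pk} simultaneously for all $k$ (the extra $KM$ slack is built in to accommodate the constant $m(\omega_k h_k)$, which satisfies $|m(\omega_k h_k)|\le KM$, and ensures $\delta+m(\omega_k h_k)\ge 0$).

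With these choices, set
\[
\Phi_k := (\omega_k + \lambda X + \nu)h_k + \delta, \qquad
\Psi_k := (\lambda X + \nu)h_k + \delta + m(\omega_k h_k).
\]
Applying Lemma~\ref{lem:C1*Pk} with $\phi=\omega_k$ (respectively with $\phi\equiv 0$, noting that adding a nonnegative constant preserves $\cone$ by inspection of its defining conditions) gives $\Phi_k,\Psi_k\in\cone$ for every $k$. By construction $m(\Phi_k)=m(\Psi_k)$ and $\Phi_k-\Psi_k = \omega_k h_k - m(\omega_k h_k)$, and the $L^1$ norms $\|\Phi_k\|_1=m(\Phi_k)$ and $\|\Psi_k\|_1=m(\Psi_k)$ are uniformly bounded by a constant $C_0=C_0(\alpha,a,K,M)$.

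Applying Theorem~\ref{thm:decay} to the pair $(\Phi_k,\Psi_k)\in\cone$ directly yields
\[
\|\P^n\big(\omega_k h_k - m(\omega_k h_k)\big)\|_1 = \|\P^n\Phi_k - \P^n\Psi_k\|_1 \le C_\alpha\cdot 2C_0\cdot n^{-\frac{1}{\alpha}+1}(\log n)^{\frac{1}{\alpha}},
\]
which is the first stated bound with $C_{\alpha,K,M}:=2C_\alpha C_0$. For the $L^p$ upgrade, note that $\P^n\Phi_k,\P^n\Psi_k\in\cone$, so they inherit the pointwise domination $\P^n\Phi_k(x)\le a\,m(\Phi_k)\,x^{-\alpha}$ and similarly for $\Psi_k$. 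Thus
\[
\bigl|\P^n(\omega_k h_k - m(\omega_k h_k))(x)\bigr| \le 2aC_0\, x^{-\alpha} \quad \text{for every } n,k.
\]

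The main (and only) step with any subtlety is packaging this pointwise $x^{-\alpha}$ bound together with the $L^1$ decay just established and feeding them into Lemma~\ref{lem:L2bounds}, which is an interpolation result with $K=2aC_0$ and $L^1$-rate $n^{-1/\alpha+1}$: this produces the claimed $L^p$ rate $n^{-\frac{1}{p\alpha}+1}$ (absorbing the logarithmic factor into a slightly larger choice of $\alpha$ as in Remark~\ref{rmk:no-log-factor}), with a constant depending in addition on $p$. The bookkeeping of $\lambda,\nu,\delta$ is the only place where the uniformity in $k$ is used; everything else follows mechanically from the results already stated.
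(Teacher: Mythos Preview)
Your proof is correct and follows exactly the approach implicit in the paper: choose uniform constants in Lemma~\ref{lem:C1*Pk} to place both $\Phi_k$ and $\Psi_k$ in the cone, apply Theorem~\ref{thm:decay} for the $L^1$ bound, and then upgrade to $L^p$ via the pointwise $x^{-\alpha}$ domination inherited from $\cone$ together with Lemma~\ref{lem:L2bounds}. One minor point of phrasing: for $\Psi_k$, rather than invoking ``adding a nonnegative constant preserves $\cone$'' (which is true but awkward here since $m(\omega_k h_k)$ may be negative), it is cleaner to apply Lemma~\ref{lem:C1*Pk} directly with $\phi\equiv 0$ and constant term $\delta+m(\omega_k h_k)$---your $KM$ slack guarantees this still meets the lemma's lower bounds.
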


\begin{pfof}{Lemma~\ref{lem:C1*Pk}}
  Denote $\Phi:=(\phi+\lambda X + \nu) h + \delta$. There are three
  conditions for $\Phi$ to be in $\cone$.

  \underline{$\Phi$ nonnegative and decreasing.} If
  $\lambda \le -\sup \phi'$ and $\nu\ge -\inf (\phi + \lambda X)$ then
  $\phi+\lambda X + \nu$ is decreasing and nonnegative. Therefore $\Phi$ is
  also decreasing (because $h\in\cone$) and nonnegative provided
  $\delta \ge 0$.

  \underline{$\Phi X^{1+\alpha}$ increasing.} For $0 < x < y \le 1$,
  need
  \begin{eqnarray*}
    & & \big[(\phi (x)+\lambda x + \nu) h(x) + \delta\big]x^{1+\alpha} \le
    \big[(\phi (y)+\lambda y + \nu) h(y) + \delta\big]y^{1+\alpha}  \\
    &\iff& [\phi(x) + \lambda x +\nu] \le [\phi(y) + \lambda y +\nu]\frac
    {h(y)}{h(x)}\frac{y^{\alpha+1}}{x^{\alpha+1}} +
    \delta \left[\frac{y^{\alpha+1}}{x^{\alpha+1}} -1\right] \frac{1}{h(x)}
  \end{eqnarray*}
  Since $h X^{\alpha+1}\ge 0$ is increasing, $1\le \frac
  {h(y)}{h(x)}\frac{y^{\alpha+1}}{x^{\alpha+1}}$, so it suffices to
  have
  \begin{eqnarray*}
    & & \phi(x) + \lambda x +\nu \le [\phi(y) + \lambda y +\nu] +
    \delta \left[\frac{y^{\alpha+1}}{x^{\alpha+1}} -1\right] \frac{1}{h(x)}\\
    & \iff&  \delta \ge -\big[(\phi(y) + \lambda y +\nu) -
    (\phi(x) + \lambda x +\nu)\big] \frac{h(x)}{\frac{y^{\alpha+1}}{x^{\alpha+1}} -1}.
  \end{eqnarray*}
  By the mean value theorem and using that $\alpha \le 1$,
  $y^{\alpha+1}-x^{\alpha+1}=(\alpha+1)\xi^\alpha(y-x)\ge (\alpha+1)
  x^\alpha(y-x) \ge (\alpha+1) x (y-x)$; therefore
  \begin{equation*}
    0\le \frac{h(x)}{\frac{y^{\alpha+1}}{x^{\alpha+1}} -1} =
    \frac{h(x) x^{\alpha+1}}{y^{\alpha+1} - x^{\alpha+1}}
    \le
    \frac{h(x) x^\alpha}{(\alpha+1) (y - x)}
    \le \frac{a m(h)}{(\alpha+1) (y - x)}.
  \end{equation*}
  Meanwhile,
  \begin{equation*}
    -\big[(\phi(y) + \lambda y +\nu) -
    (\phi(x) + \lambda x +\nu)\big]  \le (|\phi'|_\infty+|\lambda|)(y-x).
  \end{equation*}
  Using these in the above lower bound for $\delta$, we conclude that
  it suffices to have
  \begin{equation*}
    \delta  \ge  \frac{a}{\alpha+1} \big(|\phi'|_\infty+ |\lambda|\big) m(h)
  \end{equation*}

  \underline{$\Phi X^{\alpha}\le a m(\Phi$).} Using that $h X^\alpha
  \le a m(h)$,
  \begin{eqnarray*}
    [(\phi+\lambda X + \nu) h + \delta] X^\alpha
    \le   (\phi+\lambda X + \nu) h X^\alpha + \delta
    \le  \sup (\phi+\lambda X + \nu)a m(h) + \delta.
  \end{eqnarray*}
  On the other hand, $a m((\phi+\lambda X + \nu) h + \delta)\ge a
  \inf(\phi+\lambda X + \nu) m(h) + a \delta$, so it suffices to have
\begin{eqnarray*}
  \sup  (\phi+\lambda X + \nu)a m(h) + \delta  \le a \inf(\phi+\lambda
  X + \nu) m(h) + a \delta \\
  \iff
  \delta  \ge  \frac{a}{a-1} \big[   \sup  (\phi+\lambda X + \nu) - \inf(\phi+\lambda X + \nu) \big]m(h).
\end{eqnarray*}
\end{pfof}



Note that, since the transfer operators are monotone,
\[
\left|P_n \dots P_{k+1} [\phi \P^k \one] \eval{x} \right| \le P_n \dots
P_{k+1}[|\phi|_{\infty} \P^k \one] \eval{x} = |\phi |_{\infty}P_n \dots
P_{k+1}[ \P^k \one] \eval{x}.
\]
Since $|\phi |_{\infty}P_n \dots P_{k+1}[ \P^k \one]$ lies in the cone
$\cone$ this implies that
\[
|P_n \dots P_{k+1} [\phi \P^k \one]| \eval{x} \le a |\phi|_\infty
x^{-\alpha}.
\]
The following Lemma gives control over the $L^p$-norm of functions with
such a bound.

\begin{lemma}\label{lem:L2bounds}

  Suppose that $f\in L^1(m)$ and $|f (x)| \le K x^{-\alpha}$.
  Then, provided $p \ge 1$ and $\alpha p < 1$,
  \[
  ||f||_p\le C_{\alpha,p} ||f||_1^{\frac{1-\alpha p}{p-p\alpha}}
  K^{\frac{p-1}{p - p\alpha}}
  \]

  In particular, if $|f (x)| \le K x^{-\alpha}$ and
  $||f||_1 \le M n^{1-\frac{1}{\alpha}}$, then
  \[
  ||f||_p\le C_{K, M, \alpha,p} n^{1-\frac{1}{p\alpha}} \text{\ for
    $1\le p < 1/\alpha$.}
  \]
  Therefore, for $1\le p < 1/(2\alpha)$, there is $\delta >0$ such that
  $||f||_{p}\le C_{K, M,\alpha, p} n^{-1-\delta}$.
\end{lemma}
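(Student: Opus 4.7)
The plan is a standard interpolation/splitting argument. I would write
$$\|f\|_p^p = \int_0^\delta |f|^p\, dm + \int_\delta^1 |f|^p\, dm$$
for a parameter $\delta\in (0,1)$ to be optimized, and then estimate each piece using a different feature of $f$.

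For the integral over $[0,\delta]$ I would use the pointwise bound $|f(x)|\le Kx^{-\alpha}$ directly: since $\alpha p<1$,
$$\int_0^\delta |f|^p\, dm \le K^p \int_0^\delta x^{-\alpha p}\, dx = \frac{K^p}{1-\alpha p}\,\delta^{1-\alpha p}.$$
For the integral over $[\delta,1]$ I would split one factor: writing $|f|^p = |f|^{p-1}\cdot |f|$ and using the pointwise bound only on the $(p-1)$st power, which is monotone decreasing in $x$ on $[\delta,1]$, gives
$$\int_\delta^1 |f|^p\, dm \le K^{p-1}\delta^{-\alpha(p-1)}\, \|f\|_1.$$
Combining, $\|f\|_p^p \le C_{\alpha,p}\bigl(K^p \delta^{1-\alpha p} + K^{p-1}\delta^{-\alpha(p-1)}\|f\|_1\bigr)$.

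Next I would optimize by choosing $\delta$ so that the two terms balance, namely $\delta^{1-\alpha}\sim \|f\|_1/K$, i.e.\ $\delta = c\,(\|f\|_1/K)^{1/(1-\alpha)}$. Substituting back, both terms become $C\, K^{(p-1)/(1-\alpha)}\,\|f\|_1^{(1-\alpha p)/(1-\alpha)}$. Taking $p$th roots yields the first claimed bound
$$\|f\|_p\le C_{\alpha,p}\, K^{\frac{p-1}{p(1-\alpha)}}\, \|f\|_1^{\frac{1-\alpha p}{p(1-\alpha)}}.$$
(One should check that the optimal $\delta$ lies in $(0,1]$; otherwise $\delta=1$ gives the crude bound $\|f\|_p^p \le C K^p/(1-\alpha p)$, which is even better and absorbs into the constant.)

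For the second statement, I would just substitute $\|f\|_1\le M n^{1-1/\alpha}$ into the bound above. The exponent of $n$ becomes
$$\left(1-\tfrac{1}{\alpha}\right)\cdot\frac{1-\alpha p}{p(1-\alpha)} = -\frac{1-\alpha p}{\alpha p} = 1-\frac{1}{p\alpha},$$
which gives $\|f\|_p\le C n^{1-1/(p\alpha)}$. The final assertion is immediate: when $p<1/(2\alpha)$ we have $1/(p\alpha)>2$, so the exponent $1-1/(p\alpha)<-1$, i.e.\ it equals $-1-\delta$ for some $\delta>0$. I do not foresee any real obstacle; the only mildly delicate point is ensuring that the minimizing $\delta$ is admissible, which is a routine case distinction.
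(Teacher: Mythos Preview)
Your proposal is correct and follows essentially the same approach as the paper: split the integral at a point, use the pointwise bound $|f(x)|\le Kx^{-\alpha}$ on the near-zero piece and the factorization $|f|^p=|f|^{p-1}|f|$ (with the pointwise bound on $|f|^{p-1}$ and the $L^1$-norm on the remaining factor) on the other piece, then optimize the splitting point. Your treatment of the admissibility of the optimal $\delta$ is in fact slightly more careful than the paper's version.
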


\begin{proof}
  %
  The case $p=1$ is obviously true, so we assume from now on that
  $p>1$.  Denote $C_1:=||f||_1$.  Compute, for $0<x_* \le 1$, and
  $\alpha p < 1$: $\int_{x_*}^1 |f|^{p}dx \le \sup \{ |f(x)|^{p-1}
  \mid x_*\le x \le 1 \} \int_0^1 |f| dx \le K^{p-1}
  x_*^{-\alpha(p-1)} C_1$, and $\int_{0}^{x_*} |f|^{p}dx\le
  K^{p}\int_{0}^{x_*} x^{-\alpha p}dx = \frac{K^{p}}{1-\alpha p}
  x_*^{1-\alpha p}$.  We want to minimize over $x_*$ the quantity
  \[
  G(x_*):=K^{p-1} C_1 x_*^{-\alpha(p-1)} + K^{p} \frac{1}{1-\alpha p}
  x_*^{1-\alpha p} = A x_*^{-\alpha(p-1)} + B x_*^{1-\alpha p}.
  \]
  It reaches its minimum value for $x_*^{\alpha-1}=\frac{B(1-\alpha
    p)}{A \alpha (p-1)}$, which gives for the minimum of $G^{1/p}$ the
  value
  \[
  C_{\alpha, p} C_1^{\frac{1-\alpha p}{1-\alpha}\frac {1} {p}}
  K^{\frac{p-1}{p}\frac{1}{1-\alpha}}.
  \]

  For the last statement notice that
  $\frac{1-p\alpha}{p\alpha}>1 \iff 0<\alpha p <1/2$.
\end{proof}\\


\begin{cor}\label{cor:h-bounded}
  We have:
  \begin{enumerate}
  \item $||\H_n||_{q}$ is uniformly bounded in $n$ for $1\le
    q<\frac{1}{2\a}.$
  \item $||\H_n\circ \mathcal{T}^n||_{r}$ is uniformly bounded in $n$
    for $1\le r<\frac{1}{2\a}-\frac{1}{2}$.
\end{enumerate}
\end{cor}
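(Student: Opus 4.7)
My plan is to work with the explicit formula~\eqref{eq:h_n} for $\H_n$, which I rewrite as
\[
\H_n = \frac{f_n}{h_n},\qquad f_n:=\sum_{k=0}^{n-1} g_k,\qquad g_k:=\P_{k+1}^{n-k}\bigl(\mean{k}{\phi}\cdot\P^k\one\bigr),\qquad h_n:=\P^n\one.
\]
Since $\int \mean{k}{\phi}\,\P^k\one\,dm = \int \mean{k}{\phi}\circ\TF^k\,dm = 0$ by duality, each $g_k$ is the image of a zero-mean function under a concatenation of $n-k$ transfer operators. Applying Corollary~\ref{cor:C1*Pk-applications} with $\omega_k := \mean{k}{\phi}$ and $h_k := \P^k\one$ (observe that $\|\omega_k\|_{C^1}\le 2\|\phi\|_{C^1}$ and $m(h_k)=1$ uniformly in $k$) yields the $L^1$ decay $\|g_k\|_1 \le C(n-k)^{1-1/\alpha}$, ignoring the $\log$-factor per Remark~\ref{rmk:no-log-factor}. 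Monotonicity of the transfer operator together with the cone bound gives the pointwise estimate $|g_k(x)|\le 2|\phi|_\infty h_n(x)\le 2a|\phi|_\infty x^{-\alpha}$, so Lemma~\ref{lem:L2bounds} interpolates between these to
\[
\|g_k\|_p \le C(n-k)^{1-1/(p\alpha)},\qquad 1\le p<1/\alpha.
\]

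For part~(1) I apply Minkowski's inequality,
\[
\|\H_n\|_q \le \sum_{k=0}^{n-1}\|g_k/h_n\|_q,
\]
and use a uniform-in-$n$ lower bound $h_n\ge c>0$ (which is the delicate point, discussed below) to obtain $\|g_k/h_n\|_q\le \|g_k\|_q/c\le C'(n-k)^{1-1/(q\alpha)}$. The resulting series in $k$ is bounded uniformly in $n$ precisely when $1-1/(q\alpha)<-1$, i.e. $q<1/(2\alpha)$, proving~(1).

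For part~(2) I derive the bound from (1) by H\"older's inequality, using the cone upper bound $h_n\le a\,x^{-\alpha}$:
\[
\|\H_n\circ\TF^n\|_r^r \;=\; \int |\H_n|^r\, h_n\,dm \;\le\; a\int |\H_n|^r x^{-\alpha}\,dm \;\le\; a\,\|\H_n\|_q^r\,\|x^{-\alpha}\|_{q/(q-r)}.
\]
The last norm is finite iff $\alpha q/(q-r)<1$, equivalently $r<q(1-\alpha)$. Letting $q\nearrow 1/(2\alpha)$ yields the stated range $r<1/(2\alpha)-1/2$.

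The main obstacle is establishing the uniform lower bound on $h_n=\P^n\one$ required in part~(1). This is genuinely delicate: as the authors themselves note, $\H_n$ is only known to be uniformly bounded on sets of the form $[a,1)$, reflecting that sequential iterates are hard to control near the indifferent fixed point~$0$. The required control of $1/h_n$ must be extracted either by comparison of the sequential iterates $\P^n\one$ with a stationary invariant density (building on techniques from~\cite{LSV,lom-paper}), or by a more refined splitting of the integral $\int|\H_n|^q\,dm$ that separates a neighborhood of $0$ from the bulk of $[0,1]$ and treats them by different pointwise estimates.
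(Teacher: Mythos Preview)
Your approach is essentially the same as the paper's: decompose $\H_n$ via~\eqref{eq:h_n}, apply Minkowski, bound each term in $L^q$ using Corollary~\ref{cor:C1*Pk-applications} and Lemma~\ref{lem:L2bounds}, and sum the resulting series. Part~(2) is handled in the paper by the same H\"older argument you give, just phrased via $\P^n\one\in L^p$ for $p<1/\alpha$ rather than via the pointwise bound $h_n\le a x^{-\alpha}$; these are equivalent and yield the same range for~$r$.

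Where you go astray is in declaring the uniform lower bound on $h_n=\P^n\one$ to be ``the main obstacle'' and leaving it unresolved. It is not an obstacle at all: the bound $\P^n\one\ge D_\alpha>0$ on $(0,1]$, uniformly in $n$ and in the sequence of maps, is established in~\cite[Remark~1.3]{lom-paper} and is used freely throughout the present paper (see e.g.\ the proof of Lemma~\ref{lem:pointwise-bound-for-H_n}). You appear to have conflated two distinct issues. The remark you cite about $\H_n$ being bounded only on sets $[a,1)$ concerns the \emph{sup-norm} of $\H_n$, which can indeed blow up near~$0$ because the numerator $f_n$ can grow like $x^{-\alpha-1}$ there (cf.\ Lemma~\ref{lem:pointwise-bound-for-H_n}); this has nothing to do with the denominator $h_n$ vanishing. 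With the lower bound on $h_n$ cited, your argument for part~(1) is complete and matches the paper's.
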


\begin{proof}
  Recall that $\H_n$ is given in \eqref{eq:h_n}. By~\cite[Remark
  1.3]{lom-paper}, $\P^n(\one)\ge D_\alpha>0$ on $(0,1]$. We now apply
  Minkowski's inequality in the sum defining $\H_n$. Thanks to
  Lemma~\ref{lem:L2bounds} each term of the form $P_n P_{n-1}\dots
  P_{n-\ell} (\mean{n-\ell-1}{\phi} \P^{n-\ell-1} {\bf 1}), \ \ell\in[0,n-1]$ will
  be bounded in $L^p$ by $\frac{2}{D_{\a}}\ C_{\a, K,p} \ \ell
  ^{1-\frac{1}{p\a}},$ where $K$ is the $C^1$ norm of $\phi$. The role
  of $h_k$ in Lemma \ref{cor:C1*Pk-applications} is now played by
  $\P^{n-\ell-1} \one$ and therefore $M=1.$ By summing over $\ell$
  from $1$ to infinity, we get a convergent series whenever $p\a<1/2.$
  We now write $\int |\H_n\circ\TF^n|^rdx=\int |\H_n|^r \P^n{\one} \
  dx.$ Since $\P^n{\one}$ belongs to $L^p(m)$ for $1\le
  p<\frac{1}{\a}$ by the definition of $\cone$ and its invariance
  property,
  it suffices that the function $|\H_n|^{r\frac{p}{p-1}}$ be uniformly in
  $L^1(m)$, and therefore, by the previous item, that
  $r\frac{p}{p-1}<\frac{1}{2\a}.$ Thus it suffices to have
  $1\le r<\frac{p-1}{2p\a}$ for some $1\le p<\frac{1}{\a}$, which means
  $1\le r<\frac{1}{2\a}-\frac{1}{2}$.
\end{proof}


As we said in the Introduction, we will also have a pointwise bound on
the $\H_n$'s.

\begin{lemma}\label{lem:pointwise-bound-for-H_n}
  For $0 < \a < 1/2$, there is a  constant $C$ depending on $\a$ and $K=||\phi||_{C^1}$,  such that
  \begin{equation}\label{eq:pointwise-bound}
    |\H_n(x)|\le C x^{-\a-1} \quad \text{for all $x\in (0,1]$, $n\ge
      1$}.
  \end{equation}
\end{lemma}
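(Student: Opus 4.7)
The strategy is to pass to $U_n := \H_n \cdot \P^n\one$. Because $\P^n\one \ge D_\alpha > 0$ by \cite[Remark~1.3]{lom-paper}, it is enough to prove $|U_n(x)| \le C' x^{-\alpha-1}$ uniformly in $n$. Writing the explicit formula \eqref{eq:h_n} as
\[
U_n = \sum_{\ell=1}^{n-1} V_\ell, \qquad V_\ell := \P_{n-\ell+1}^{\ell}\bigl(\mean{n-\ell}{\phi}\,\P^{n-\ell}\one\bigr),
\]
reduces the problem to controlling each $V_\ell$ pointwise and then summing.

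For every $\ell$, the factor $\mean{n-\ell}{\phi}\,\P^{n-\ell}\one$ has the form $(\phi-c_\ell)h$ with $|c_\ell|\le\|\phi\|_\infty$ and $h=\P^{n-\ell}\one\in\cone$ of unit mean, so Lemma~\ref{lem:C1*Pk} (applied as in Corollary~\ref{cor:C1*Pk-applications}) produces a decomposition $V_\ell = V_\ell^+-V_\ell^-$ with $V_\ell^\pm\in\cone$ and with common mean $\mu_\ell$ bounded by a constant $C_K$ depending only on $K=\|\phi\|_{C^1}$ and $\alpha$. Cone membership supplies two complementary estimates on each $V_\ell^\pm$: since $V_\ell^\pm$ is decreasing and nonnegative on $[0,1]$, the inequality $\int_{1/2}^1 V_\ell^\pm\ge V_\ell^\pm(1)/2$ gives $V_\ell^\pm(1)\le 2\|V_\ell^\pm\|_1=2\mu_\ell$; and since $x^{\alpha+1}V_\ell^\pm(x)$ is increasing on $(0,1]$, $V_\ell^\pm(x)\le V_\ell^\pm(1)\,x^{-\alpha-1}$. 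Combining,
\[
|V_\ell(x)| \le 4 C_K\, x^{-\alpha-1} \quad\text{for all $\ell\ge 1$ and $x\in(0,1]$.}
\]

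The main obstacle is that summing this uniform estimate over $\ell=1,\dots,n-1$ produces an unwanted factor of $n$. I plan to close the argument by bringing in the $L^1$-decay $\|V_\ell\|_1\le C\ell^{1-1/\alpha}$ furnished by Theorem~\ref{thm:decay}, which is summable precisely under the hypothesis $\alpha<1/2$. Splitting at a cutoff $\ell_0=\ell_0(x)$, I would use the uniform cone bound above on the head $\ell\le\ell_0$ and, on the tail $\ell>\ell_0$, exploit the cone structure of $V_\ell^\pm$ to translate $L^1$-smallness of $V_\ell=V_\ell^+-V_\ell^-$ into pointwise smallness: since both $V_\ell^\pm$ are decreasing and $x^{\alpha+1}V_\ell^\pm$ increasing, their difference has bounded total variation on any subinterval $[x,1]$ and its size at $x$ is controlled by $\|V_\ell\|_1/x$ up to a factor of $x^{-\alpha}$. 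The hardest step will be making this translation from $L^1$-decay to pointwise decay quantitatively sharp, of the form $|V_\ell(x)|\lesssim x^{-\alpha-1}\ell^{1-1/\alpha}$ (ignoring logarithms, cf.~Remark~\ref{rmk:no-log-factor}); once such a tail estimate is in hand, choosing $\ell_0(x)$ to balance the head and tail yields the required total bound $|U_n(x)|\le C'x^{-\alpha-1}$ uniformly in $n$, and dividing by $\P^n\one\ge D_\alpha$ gives the claim.
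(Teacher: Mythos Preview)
Your setup is exactly the paper's: reduce to $U_n=\sum_\ell V_\ell$ with $V_\ell=V_\ell^+-V_\ell^-$, $V_\ell^\pm\in\cone$ of uniformly bounded mean, and then try to convert the $L^1$-decay $\|V_\ell\|_1\lesssim \ell^{1-1/\alpha}$ into a pointwise bound. But the decisive step---your ``hardest step''---is not carried out. The heuristic you give (bounded variation of $V_\ell$ on $[x,1]$, size at $x$ controlled by $\|V_\ell\|_1/x$ up to $x^{-\alpha}$) is too vague: bounded variation alone does not force $L^1$-smallness to imply $L^\infty$-smallness with a uniform constant, and your argument does not identify any modulus of continuity that is \emph{uniform in $\ell$}. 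Without that uniformity the translation fails.

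The missing observation is that for $f\in\cone$ the function $X^{\alpha+1}f$ is not merely increasing but Lipschitz, with seminorm bounded by $a(1+\alpha)\,m(f)$ (this is \cite[footnote to Lemma~2.3]{LSV}, recalled in the paper as \eqref{eq:lip-norm}). Hence $g_\ell:=X^{\alpha+1}V_\ell = X^{\alpha+1}V_\ell^+ - X^{\alpha+1}V_\ell^-$ has a Lipschitz constant bounded uniformly in $\ell$ (since $m(V_\ell^\pm)\le C_K$). For a function $g$ with Lipschitz constant $L$ on $[0,1]$ one has $\|g\|_\infty \le C_L\|g\|_1$ (a ball of radius $\|g\|_\infty/(2L)$ around the maximizer already contributes $\|g\|_\infty^2/(4L)$ to the $L^1$-norm). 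Since $\|g_\ell\|_1\le \|V_\ell\|_1\lesssim \ell^{1-1/\alpha}$, this yields directly
\[
|V_\ell(x)| \le x^{-\alpha-1}\|g_\ell\|_\infty \lesssim x^{-\alpha-1}\,\ell^{1-1/\alpha}
\]
for every $\ell$, and summing over $\ell$ converges for $\alpha<1/2$. No cutoff $\ell_0(x)$ or head/tail split is needed.
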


\begin{proof}
  By using again formula~\eqref{eq:h_n} for $\H_n$ (where $\phi_0=0$)
  and the bound $\P^n(\one)\ge D_\alpha>0$ we are left with the
  pointwise estimate of
  \begin{equation}\nonumber 
    P_n (\mean{n-1}{\phi} \P^{n-1} \one) +P_n P_{n-1}
    (\mean{n-2}{\phi} \P^{n-2} \one)+ \dots + P_n P_{n-1} \dots P_1 (\mean{0}{\phi} \P^0
    \one).
  \end{equation}
  By Corollary~\ref{cor:C1*Pk-applications}, for each $k\ge 1$ one can
  write $\mean{k}{\phi}\P^k\one = \big(\phi
  -m(\phi\circ\TF^k)\big)\P^k\one=A_k-B_k$ where $A_k, B_k\in \cone$
  with $m(A_k), m(B_k)$ uniformly bounded by some constant $ C_{\a, K}
  <\infty$. Therefore, by the decay Theorem~\ref{thm:decay} (and
  ignoring the $\log$-correction), there is a new constant $C'$
  depending only on $\a$ and $K$ such that
  \begin{equation}\label{eq:Ak-Bk_L1-bound}
    \|\P_{k+1}^{n-k}(A_k-B_k)\|_1 \le C'
    (n-k)^{-\frac{1}{\alpha}+1}.
  \end{equation}

  We now recall the footnote to the proof of \cite[Lemma 2.3]{LSV}: if
  $f\in \cone$ with $m(f)\le M$ then
    \begin{equation}\label{eq:lip-norm}
      |x^{\a+1} f(x)-y^{\a+1} f(y)| \le a(1+\a)M|x-y| \quad \text{for
        $0<x,y\le 1$}.
    \end{equation}
    But a bound $|g(x)-g(y)|\le L|x-y|$ for the Lipschitz-seminorm
    $|g|_{\Lip}$ implies
  \begin{equation}\label{eq:lip-integral}
    \|g\|_1\ge C_L \|g\|_\infty.
  \end{equation}

  Combining the above observations and
  since $m(\P_{k+1}^{n-k}(f))=m(f)$, we obtain that
  $|X^{\a+1}\P_{k+1}^{n-k}(A_k-B_k)|_{\Lip} \le
  |X^{\a+1}\P_{k+1}^{n-k} (A_k)|_{\Lip}+|X^{\a+1}\P_{k+1}^{n-k}
  (B_k)|_{\Lip}\le L$ uniformly for $n\ge 1$, $1\le k < n$, and then
  \[
  \|X^{\a+1}\P_{k+1}^{n-k}(A_k-B_k)\|_\infty \le 1/C_L \|X^{\a+1}
  \P_{k+1}^{n-k}(A_k-B_k)\|_1 \le C'' (n-k)^{-\frac{1}{\alpha}+1}
  \]
  for a new constant $C''$ depending only on $\a, K, L,$ which implies
  that
  \[
  |\P_{k+1}^{n-k}(A_k-B_k)(x)| \le x^{-\a-1} C'' (n-k)^{-\frac{1}{\alpha}+1}
  \]
  and therefore, for $0<\alpha < 1/2$,
  \[
  \left|\sum_{k=1}^{n-1} \P_{k+1}^{n-k}(A_k-B_k)(x)\right| \le
  x^{-\a-1} C'' \sum_{k=1}^{n-1} (n-k)^{-\frac{1}{\alpha}+1} \le C
  x^{-\a-1}
  \]
  as desired.
\end{proof}

We finish this Section by proving a type of Borel-Cantelli Lemma which
is an unavoidable tool in proving non-stationary limit theorems.


\begin{thm}[Strong Borel-Cantelli]\label{thm:SBC}
  Suppose that for $j\ge 1$,
  $\psi_j \in C^1([0,1])$ with uniformly bounded $C^1$-norms.

  (a) If $0 < \alpha < 1/2$ then
  \[
  \sum_{j=1}^n \psi_j (\TF^j) -\sum_{j=1}^n m(\psi_j (\TF^j))= O
  (n^{1/2}(\log\log n)^{3/2}) \text{\quad $m$-a.e.}
  \]
  and therefore, if $\liminf_j m(\psi_j\circ \TF^j)>0$ then
  \[
  \frac{\sum_{j=1}^n \psi_j (\TF^j x)}{\sum_{j=1}^n m(\psi_j \circ
    \TF^j)}\to 1 \text{\quad $m$-a.e. $x$.}
  \]

  (b) If $0<\a<1$ then
  \[
  \frac{1}{n} {\left[\sum_{j=1}^n \psi_j (\TF^j x) - \sum_{j=1}^n
      m(\psi_j \circ \TF^j)\right]}\to 0 \text{\quad $m$-a.e. $x$.}
  \]
\end{thm}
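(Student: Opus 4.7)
The plan is to reduce Theorem \ref{thm:SBC} to a variance estimate for the centered partial sums and then apply a Borel--Cantelli/maximal inequality argument to obtain the $m$-a.e.\ growth rates. Set
\[
X_j \;:=\; \psi_j\circ\TF^j \;-\; m(\psi_j\circ\TF^j) \;=\; \mean{j}{\psi_j}\circ\TF^j,
\]
so that the theorem is about the $m$-a.e.\ behaviour of $S_n = \sum_{j=1}^n X_j$.

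The first step is to estimate the covariances. For $1\le i<j$, combining the duality of $\P^j$ with the factoring identity $\P^i(F\circ\TF^i\cdot h) = F\cdot \P^i h$ gives
\[
\E[X_iX_j] \;=\; \int \psi_j\cdot \P_{i+1}^{j-i}\big(\mean{i}{\psi_i}\cdot \P^i\one\big)\,dm,
\]
where the constant $m(\psi_j\circ\TF^j)$ drops out because $\int \mean{i}{\psi_i}\cdot \P^i\one\,dm = 0$. Since $\P^i\one\in\cone$ with $m(\P^i\one)=1$ and the $\psi_i$ have uniformly bounded $C^1$-norm, Corollary \ref{cor:C1*Pk-applications} (applied with $\omega_k:=\psi_k$, $h_k:=\P^k\one$) yields $\|\P_{i+1}^{j-i}(\mean{i}{\psi_i}\P^i\one)\|_1 \le C(j-i)^{1-1/\alpha}$, and hence $|\E[X_iX_j]|\le C(j-i)^{1-1/\alpha}$ by H\"older. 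For part (a), $\alpha<1/2$ forces $1-1/\alpha<-1$, the correlation series is summable, and one gets the uniform block estimate $\V(\sum_{j=M+1}^{M+N}X_j)\le CN$; for part (b), $\alpha<1$ gives at worst $\V(S_N)\le CN^{\max(1,\,3-1/\alpha)} = o(N^2)$.

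For part (b), I would apply Chebyshev's inequality along a subsequence $N_k := \lceil k^{\rho}\rceil$ with $\rho$ large enough that $\sum_k m\{|S_{N_k}|>\epsilon N_k\}$ is summable; Borel--Cantelli then gives $S_{N_k}/N_k\to 0$ $m$-a.e., and a standard maximal inequality (again driven by the variance bound) interpolates between consecutive $N_k$'s to recover $S_n/n\to 0$ along the full sequence. For part (a), the linear block-variance bound together with a Gal--Koksma / Rademacher--Menshov inequality already produces the weaker rate $S_N = O(\sqrt{N}(\log N)^{3/2+\epsilon})$ a.e. To sharpen this to $(\log\log N)^{3/2}$ I would invoke the reverse-martingale decomposition $S_N = M_N + \H_{N+1}\circ\TF^{N+1}$ developed earlier: apply a law of the iterated logarithm for reverse martingales (Heyde--Scott or Stout) to $M_N$, whose bracket grows linearly in $N$ by Lemma \ref{lem:c}, to obtain $M_N = O(\sqrt{N\log\log N})$ $m$-a.e., and combine Corollary \ref{cor:h-bounded} (uniform $L^r$-boundedness of $\H_n\circ\TF^n$) with the pointwise bound of Lemma \ref{lem:pointwise-bound-for-H_n} via Chebyshev--Borel--Cantelli to control the error $\H_{N+1}\circ\TF^{N+1}$ by $o(\sqrt{N})$ $m$-a.e.

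The main obstacle is this last refinement. The elementary variance/Gal--Koksma route only produces the $(\log N)^{3/2+\epsilon}$ rate, so improving to $(\log\log N)^{3/2}$ genuinely requires the martingale structure, which in turn demands $L^r$-boundedness of $\H_n\circ\TF^n$ for $r$ large enough that a Chebyshev--Borel--Cantelli argument on the error closes. Corollary \ref{cor:h-bounded} provides this only for $r<1/(2\alpha)-1/2$, which is exactly the mechanism that forces the restriction $\alpha<1/2$ in part (a).
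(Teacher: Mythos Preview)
Your covariance computation and the resulting variance bounds are exactly what the paper uses, and your argument for part~(b) (Chebyshev along a polynomial subsequence, Borel--Cantelli, then interpolation) is essentially the paper's Lemma~\ref{lem:SLLN}; the only cosmetic difference is that the paper interpolates using the uniform $L^\infty$ bound on the summands rather than a maximal inequality.

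For part~(a), however, you are working harder than the paper does. The paper simply feeds the linear block-variance estimate into Sprindzuk's theorem (your ``Gal--Koksma/Rademacher--Menshov'' step) and stops there; that yields $O\big(n^{1/2}(\log n)^{3/2+\epsilon}\big)$ a.e., and the $(\log\log n)^{3/2}$ in the statement is a slip. Your proposed refinement via the reverse-martingale decomposition and a LIL is therefore not needed, and in fact it does not close on the stated range: controlling $\H_{N+1}\circ\TF^{N+1}=o(\sqrt{N})$ a.e.\ by Chebyshev--Borel--Cantelli requires an $L^r$ bound with $r>2$, whereas Corollary~\ref{cor:h-bounded} only gives $r<\tfrac{1}{2\alpha}-\tfrac{1}{2}$, forcing $\alpha<\tfrac{1}{5}$ rather than $\alpha<\tfrac{1}{2}$. (There is also the minor issue that the decomposition $S_N=M_N+\H_{N+1}\circ\TF^{N+1}$ in the paper is written for a fixed observable $\phi$, so you would need to redo it for the varying $\psi_j$.) In short: drop the LIL refinement, and your proof is the paper's proof.
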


\begin{proof}
  To prove the first statement in part (a) we will use the {\Sprindzuk}
  Theorem~\ref{thm:sprindzuk} in the Appendix. By adding the same constant
  to all the $\psi_j$'s and rescaling, we can assume without loss of
  generality that $\inf_j m(\psi_j\circ \TF^j) > 0$ and
  $\sup_j m(\psi_j\circ \TF^j)\le 1$. We take $g_k=m(\psi_k\circ \TF^k)$
  and $h_k=1$ in Theorem~\ref{thm:sprindzuk}, thus it suffices to give a
  linear upper bound for $\E[(\sum_{j=1}^n \psi_j\circ \TF^j -b_n )^2]$,
  where $b_n:=\sum_{j=1}^n m(\psi_j\circ \TF^j)$; note that the same
  estimate can be derived for sums over $m \le j \le n$. Expand
  \begin{multline*}
    \E[(\sum_{j=1}^n \psi_j \circ \TF^j -b_n )^2] =\sum_{j=1}^n\E [\psi_j
    \circ \TF^j -m(\psi_j \circ \TF^j)]^2\\
    +2\sum_{i=1}^n \sum_{j>i} \E[(\psi_j \circ \TF^j-m(\psi_j \circ
    \TF^j)(\psi_i \circ \TF^i-m(\psi_i \circ \TF^i))]
  \end{multline*}
  and use the decay to estimate the mixed terms. Denote, following
  Definition~\ref{def.mean}, $\mean{j}{g}:=g-m(g\circ \TF^j)$. Then, for
  $j>i\ge 1$,
    \begin{multline*}
      |\E[(\psi_j\circ \TF^j-m(\psi_j\circ \TF^j)(\psi_j\circ \TF^j-m(\psi_j\circ \TF^j)]| = |\E[\mean{j}{\psi_j} \circ \TF^j \cdot
      \mean{i}{\psi_i} \circ
      \TF^i]| \\
      = |\E[(\mean{j}{\psi_j} \circ \TF_{i+1}^{j-i} \cdot
      \mean{i}{\psi_i} \cdot \P^i(\one)]| = |\E[(\mean{j}{\psi_j} \cdot
      \P_{i+1}^{j-i} (\mean{i}{\psi_i}
      \P^i(\one))]| \\
      \le \|\mean{j}{\psi_j}\|_\infty \|\P_{i+1}^{j-i} (\mean{i}{\psi_i}
      \P^i(\one))\|_1 \le C (j-i)^{1-\frac 1 \alpha}
    \end{multline*}
    where in the last inequality we used
    Corollary~\ref{cor:C1*Pk-applications}.
    Therefore
    \begin{multline*}
      \E[(\sum_{j=1}^n \psi_j\circ \TF^j -b_n )^2] \\ \le 2\sum_{i=1}^n
      |(\psi_j\circ \TF^j-m(\psi_j\circ \TF^j)|_{\infty} m(\psi_j\circ \TF^j) + 2 C
      \sum_{i=1}^n \sum_{j>i} (j-i)^{1-\frac{1}{\alpha}} \le n C',
    \end{multline*}
    where the constants $C, C'$ are independent of $j$ and $n.$. The
    conclusion now follows from the {\Sprindzuk}
    Theorem~\ref{thm:sprindzuk}.

    For (b), note that for $1/2 \le \a < 1$ the above computation still
    gives
    \begin{equation*}
      \E[(\sum_{j=1}^n \psi_j\circ \TF^j -b_n )^2]\le C
      n^{3-\frac{1}{\alpha}}
    \end{equation*}
    which implies that
    \[
    \sum_{j=1}^n \psi_j\circ \TF^j -b_n = O(n^{1-\eta}) \text{ a.s.}
    \]
    for some $\eta>0$, see the standard Lemma~\ref{lem:SLLN}.
  \end{proof}

  \begin{lemma}\label{lem:SLLN}
    Assume the random variables $X_n$ have mean zero, and there are
    $M<\infty$, $\gamma <2$ such that
    \[
    \|X_n\|_\infty\le M, \quad \Var\big(\sum_{k=1}^n X_k\big) \le C
    n^\gamma \qquad \text { for all $n$.}
    \]
    Then
    \begin{equation*}
      \sum_{k=1}^n X_k = O(n^{\eta}) \text { a.s. for  $\eta > \frac{\gamma+1}{3}$.}
    \end{equation*}
  \end{lemma}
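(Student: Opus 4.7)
The plan is a standard sparse-subsequence plus Borel--Cantelli argument, with the uniform bound $\|X_n\|_\infty \le M$ used to interpolate between consecutive terms of the subsequence. We may assume $\eta < 1$ (otherwise the conclusion is trivial from $|S_n|\le Mn$) and also that $\gamma \ge 0$ (otherwise replace $\gamma$ by $0$). Let $S_n:=\sum_{k=1}^n X_k$.

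First I would fix an exponent $\beta>0$ (to be chosen at the end) and pass to the subsequence $n_k := \lfloor k^\beta\rfloor$. By hypothesis, Chebyshev's inequality yields
\[
\mathbb{P}\bigl(|S_{n_k}| > n_k^\eta\bigr) \le \frac{\Var(S_{n_k})}{n_k^{2\eta}} \le C\, n_k^{\gamma-2\eta} \le C'\, k^{\beta(\gamma-2\eta)}.
\]
If $\beta$ is chosen so that $\beta(2\eta-\gamma) > 1$ (which requires $2\eta > \gamma$; this is guaranteed by $\eta > (\gamma+1)/3$ since $\gamma < 2$), these probabilities form a summable series, and Borel--Cantelli gives $|S_{n_k}| = O(n_k^\eta)$ almost surely.

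Next I would control the oscillation of $S_n$ between consecutive $n_k$'s using the essential supremum bound: for $n_k \le n \le n_{k+1}$,
\[
|S_n - S_{n_k}| \le M\,(n_{k+1}-n_k) \le C\, k^{\beta-1}.
\]
I want this to be $O(n_k^\eta) = O(k^{\beta\eta})$, which forces the constraint $\beta-1 \le \beta\eta$, i.e.\ $\beta(1-\eta)\le 1$. Combining both constraints, $\beta$ must satisfy
\[
\frac{1}{2\eta-\gamma} \;<\; \beta \;\le\; \frac{1}{1-\eta},
\]
and this interval is nonempty precisely when $1-\eta < 2\eta - \gamma$, i.e.\ $\eta > (\gamma+1)/3$, which is the hypothesis. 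Picking any such $\beta$ then combines the two bounds to give $|S_n| \le |S_{n_k}| + |S_n - S_{n_k}| = O(n^\eta)$ almost surely.

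The only potentially delicate point is verifying that $n_{k+1}-n_k \lesssim k^{\beta-1}$ uniformly (for noninteger $\beta$ and with the floor function), but this follows from the mean value theorem applied to $t\mapsto t^\beta$; all other steps are routine. The real content of the lemma is the arithmetic $\eta > (\gamma+1)/3$ that matches the two competing exponents.
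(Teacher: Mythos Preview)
Your proof is correct and follows essentially the same approach as the paper's: Chebyshev along a polynomial subsequence $n_k\approx k^\beta$, Borel--Cantelli to control $S_{n_k}$, and then the uniform bound $\|X_n\|_\infty\le M$ to interpolate via $n_{k+1}-n_k\lesssim k^{\beta-1}$, with the two constraints on $\beta$ meeting exactly at $\eta=(\gamma+1)/3$. The only differences are cosmetic (the paper parametrizes with $1-\delta$ in place of your $\eta$ and $\omega$ in place of your $\beta$, and leaves the final optimization implicit), and your handling of the edge cases and the floor-function gap estimate is slightly more careful.
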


  \begin{proof}
    Denote $S_n:=\sum_{k=1}^n X_k$. From Tchebycheff's inequality,
    \begin{equation*}
      P(|S_n| > n^{1-\delta}) \le \frac{\Var(S_n)}{(n^{1-\delta})^2} \le C
      n^{\gamma-2 \delta -2}.
    \end{equation*}
    Pick $\delta>0$ so that $\gamma-2 \delta -2<0$ and $\omega>0$ such that
    $\omega (2-\gamma+2 \delta)>1$. Then, for the subsequence
    $n_k:=k^\omega$,
    \begin{equation*}
      \sum_k P(|S_{n_k}| > n_k^{1-\delta}) < \infty
    \end{equation*}
    so, by Borel-Cantelli,
    \begin{equation}\label{eq:subsequence}
      |S_{n_k}| = O(n_k^{1-\delta}) \text { a.s.}
    \end{equation}
    Using \eqref{eq:subsequence}, one has a.s.: if $n_k \le n < n_{k+1}$
    for some $k$, then
    \[
    |S_n|\le |S_{n_k}| + [n_{k+1}-n_k] \sup \|X_\ell\|_\infty \le
    O(n_k^{1-\delta})+ C k^{\omega-1} M \le O(n^{1-\delta})+
    C(n^{1/\omega})^{\omega-1} M
    \]
    therefore $|S_n|=O(n^\eta)$ a.s. with
    \[
    \eta=\max\left\{1-\delta,\frac{\omega-1}{\omega}\right\}.
    \]
    Optimize over $\delta$ and $\omega$ to get the claimed lower bound on
    $\eta$.
  \end{proof}

\section{Central Limit Theorem}

We assume in this section that $0 < \a < 1/2$ (note that in the
stationary case the CLT holds only in this range).  With our approach
we can only prove the non-stationary CLT for a lower upper bound on
$\a$, which will be stated later.

We define scaling constants $\sigma_n^2=\E[(\sum_{j=1}^{n}\mean{j}{\phi} \circ
\mathcal{T}^j)^2]$. This sequence of constants play the role of non-stationary
variance. As we pointed out in the Introduction, giving estimates on the growth and non-degeneracy of $\sigma_n$
in this non-stationary setting is more difficult than in the usual
stationary case.

\begin{thm}[CLT for $C^1$ functions]
  \label{thm:CLT}
  Let $\phi$ be a $C^1([0,1])$ function, and define $S_n$ as
  in~\eqref{eq:defn-S_n},
  \begin{equation}\nonumber
    S_n:=\sum_{k=1}^{n}\mean{k}{\phi} \circ T_{\b_k}\circ \cdots \circ T_{\b_1}.
  \end{equation}

  Assume that
  \[
    \sigma_n^2:=\Var(S_n)=\E[(\sum_{i=1}^n \mean{i}{\phi}\circ \TF^i)^2]
    \gtrsim  n^{\beta}.
  \]

  Then
  %
  \[
    \text{$0 <\a < \alphaupperbound$ and $\b > \frac{1}{2(1-2\a)}$}
    \quad\implies\quad \frac{S_n}{\sigma_n} \to^{d} \mathcal{N}(0,1).
  \]

  In particular, $\b>9/14=0.643$ suffices for any
  $0< \a < \alphaupperbound$, and the lower bound on $\b$ approaches
  $\frac 1 2$ as $\a$ approaches zero.
\end{thm}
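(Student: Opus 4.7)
The strategy is to apply a reverse martingale central limit theorem to the martingale $M_n$ from the decomposition $S_n = M_n + \H_{n+1}\circ\TF^{n+1}$ of Section~2, whose differences are $\psi_n\circ\TF^n$ with $\psi_n = \mean{n}{\phi} + \H_n - \H_{n+1}\circ T_{n+1}$. The first task is to check that the coboundary remainder is negligible at the scale $\sigma_n$: by Corollary~\ref{cor:h-bounded}(2), $\|\H_{n+1}\circ\TF^{n+1}\|_r$ is uniformly bounded whenever $r<\frac{1}{2\a}-\frac12$. For $\a<1/9$ this holds with $r=2$, and together with the hypothesis $\sigma_n^2\gtrsim n^{\b}\to\infty$ one obtains $\H_{n+1}\circ\TF^{n+1}/\sigma_n\to 0$ in $L^2$, hence in probability. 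By Slutsky it then suffices to prove $M_n/\sigma_n \to^{d} \mathcal N(0,1)$.

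For the martingale CLT I would verify a Lyapunov moment condition on the array $X_{n,j}:=\psi_j\circ\TF^j/\sigma_n$, together with convergence to $1$ of its quadratic variation. Writing
\[
\E|\psi_j\circ\TF^j|^{2+\delta}=\int|\psi_j|^{2+\delta}\,\P^j(\one)\,dm
\]
and applying H\"older reduces the bound to controlling $\|\psi_j\|_{(2+\delta)p}$ and $\|\P^j(\one)\|_{p/(p-1)}$. The first is uniform in $j$ by Corollary~\ref{cor:h-bounded}(1) provided $(2+\delta)p<1/(2\a)$, while the second is uniform in $n$, by definition of $\cone$, provided $p/(p-1)<1/\a$, i.e.\ $p>1/(1-\a)$. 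The Lyapunov ratio is then $O(n^{1-\b(2+\delta)/2})$, which vanishes once $\delta>2/\b-2$. Choosing $\delta$ slightly larger than $2-8\a$ (possible thanks to $\b>1/(2(1-2\a))$) and substituting into the compatibility condition $1/(1-\a)<1/(2\a(2+\delta))$ yields the algebraic inequality $16\a^2-9\a+1>0$; this holds for every $\a<1/9$ and is the common source of both parameter restrictions in the theorem (and explains why at the endpoint $\a=1/9$ one needs $\b>9/14$).

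The remaining, and more delicate, ingredient is the convergence of the quadratic variation $\sigma_n^{-2}\sum_{j=1}^n(\psi_j\circ\TF^j)^2\to 1$ in probability. Lemma~\ref{lem:c} already identifies its mean as $1-\sigma_n^{-2}\E[\H_{n+1}^2\circ\TF^{n+1}]=1+o(1)$, so the task reduces to showing $\Var\bigl(\sigma_n^{-2}\sum_j(\psi_j\circ\TF^j)^2\bigr)=o(1)$. I would expand this as a double sum of covariances $\operatorname{Cov}\bigl((\psi_i\circ\TF^i)^2,(\psi_j\circ\TF^j)^2\bigr)$, split each $\psi_j$ into its $C^1$ part $\mean{j}{\phi}$ and its $\H$-contributions, and estimate the off-diagonal terms using Theorem~\ref{thm:decay} and Corollary~\ref{cor:C1*Pk-applications} for the $C^1$ factors, combined with the truncation $\H_j=\H_j^{\epsilon}+(\H_j-\H_j^{\epsilon})$ from Lemma~\ref{lem:peligrad-computation} and the pointwise bound $|\H_j(x)|\le C x^{-\a-1}$ of Lemma~\ref{lem:pointwise-bound-for-H_n} to absorb the tails of the $\H$-factors. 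The main obstacle is arranging the H\"older exponents and the truncation level $\epsilon$ so that \emph{all} the polynomially decaying contributions are simultaneously summable; the window $\a<1/9$, $\b>1/(2(1-2\a))$ is precisely what allows the variance step and the Lyapunov step to close with the same choice of parameters.
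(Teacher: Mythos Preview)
Your reduction to the reverse-martingale CLT, the treatment of the coboundary remainder $\H_{n+1}\circ\TF^{n+1}$, and your verification of the Lindeberg-type condition via a Lyapunov moment are all correct and close to the paper's argument; the paper checks Lindeberg directly by H\"older/Chebyshev rather than Lyapunov, but the two are interchangeable here. One point to flag: the constraint $\a<1/9$ in the paper does \emph{not} come from condition~(ii), and the constraint $\b>1/(2(1-2\a))$ does not come from your Lyapunov arithmetic either---both emerge from the quadratic-variation step (see below), so your statement that the algebraic inequality $16\a^2-9\a+1>0$ is ``the common source of both parameter restrictions'' is not accurate.

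The genuine gap is your plan for the quadratic variation. You propose to estimate $\operatorname{Cov}\bigl((\psi_i\circ\TF^i)^2,(\psi_j\circ\TF^j)^2\bigr)$ directly by expanding and applying the decay Theorem~\ref{thm:decay}. The obstruction is that $\psi_j$ is neither $C^1$ nor bounded: it contains $\H_j$ and $\H_{j+1}\circ T_{j+1}$, so $\psi_j^2\,\P^j(\one)$ cannot be placed in cone-difference form via Corollary~\ref{cor:C1*Pk-applications}. Unwinding $\H_j$ via~\eqref{eq:h_n}, the quadratic pieces $\H_j^2$ and $\H_j\H_{j+1}\circ T_{j+1}$ become sums of $\sim j^2$ cone-terms, and the resulting bound on the double sum over $i<j$ is too large to be $o(\sigma_n^4)$. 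Neither the truncation of Lemma~\ref{lem:peligrad-computation} nor the pointwise bound of Lemma~\ref{lem:pointwise-bound-for-H_n} yields polynomial decorrelation in $j-i$ for these pieces. The paper avoids this entirely by the algebraic identity
\[
\psi_j^2=\mean{j}{\phi}^2+(\H_j^2-\H_{j+1}^2\circ T_{j+1})-2\psi_j\,\H_{j+1}\circ T_{j+1}+2\mean{j}{\phi}\H_j,
\]
after which the $\H^2$-difference telescopes, the $\psi_j\H_{j+1}$ term is handled by the Peligrad orthogonality trick (Lemma~\ref{lem:peligrad-computation}), the $\mean{j}{\phi}^2$ part is a sum of $C^1$ observables controlled a.s.\ by the Strong Borel--Cantelli Theorem~\ref{thm:SBC}, and only $\sum_j(\mean{j}{\phi}\H_j)\circ\TF^j$ requires a covariance expansion (Lemma~\ref{lem:phi-H}). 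That expansion works because $\mean{i}{\phi}\H_i\,\P^i\one$ is a sum of only $i$ cone-differences (Lemma~\ref{lem:decay2}), giving the bound $Ci/(j-i)^{\a^*}$ whose double sum is $O(n^{1/(1-2\a)})$---this is where $\b>1/(2(1-2\a))$ actually comes from. Finally, the paper still has to pass from $\sum_j\psi_j^2\circ\TF^j$ to the \emph{conditional} sum $\sum_j\E[\psi_j^2\circ\TF^j\mid\B_{j+1}]$ required by Theorem~\ref{thm:Brown}; this is Theorem~\ref{bU}, which uses that the difference is itself a reverse martingale difference and needs $\psi_j\circ\TF^j\in L^4$ uniformly, whence the bound $\a<1/9$. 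Your proposal does not address this last step either.
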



\begin{rmk}
   The above Theorem holds, with the same proof, if we allow $\phi$ to vary
   but stay bounded in $C^1$ (as in our Strong Borel-Cantelli
   Theorem~\ref{thm:SBC}). That is, consider
   \[
   S_n:=\sum_{k=1}^{n}\mean{k}{\phi_k} \circ T_{\b_k}\circ \cdots \circ
   T_{\b_1}
   \]
   where $\phi_k\in C^1([0,1])$ have uniformly bounded $C^1$-norms.

   To keep the notation simpler, we will not prove this more general case.
\end{rmk}

Following the approach of Gordin we will express
$S_n=\sum_{j=1}^n \mean{j}{\phi} \circ \TF^j$ as the sum of a
(non-stationary) martingale difference array and a controllable error term
and then use the following Theorem from Conze and Raugi~\cite[Theorem
5.8]{Conze_Raugi}, which is a modification of a result of
B.~M.~Brown~\cite{Brown} from martingale differences to reverse martingale
differences.

\begin{thm}[{{\cite[Theorem 5.8]{Conze_Raugi}}}]\label{thm:Brown}
  Let $(X_i, \mathcal{B}_i)$ be a sequence of differences of square
  integrable reversed martingales, defined on a probability space
  $(\Omega, \mathcal{B}, \mathcal{P})$. For $n\ge 0$ let
\[
  S_n=X_0+\ldots + X_{n-1},~\sigma_n^2=\sum_{k=0}^{n-1}
  \E[X_k^2],~V_n=\sum_{k=0}^{n-1} \E[X_k^2| \mathcal{B}_{k+1}].
\]
Assume the following two conditions hold:
\begin{itemize}
\item[(i)] the sequence of random variables $(\sigma_n^{-2} V_n)_{n\ge1} $ converges in probability to $1$.
\item[(ii)] For each $\epsilon>0$,
  $\lim_{n\to \infty} \sigma_n^{-2} \sum_{k=0}^{n-1} \E[X_k^2 \one_{\{
    |X_k|> \epsilon \sigma_n\}} ]=0.$
\end{itemize}
Then
\[
\lim_{n\to \infty} \sup_{\alpha \in \R} \left|P
  \left[\frac{S_n}{\sigma_n}<a\right]-\frac{1}{\sqrt{2\pi}}
  \int_{-\infty}^{\alpha} e^{-\frac{x^2}{2}}~dx\right|=0.
\]
\end{thm}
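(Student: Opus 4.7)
The plan is to reduce this reverse-martingale CLT to the classical forward-martingale CLT (Brown, 1971; equivalently Hall--Heyde, 1980, Theorem 3.2) by a time-reversal relabeling of the indices. For each fixed $n$, define, for $1 \le k \le n$,
\[
  X_{n,k} := X_{n-k}, \qquad \mathcal{F}_{n,k} := \mathcal{B}_{n-k}, \qquad \mathcal{F}_{n,0} := \mathcal{B}_n.
\]
Since $(\mathcal{B}_j)_{j \ge 0}$ is decreasing, $(\mathcal{F}_{n,k})_{k=0}^n$ is increasing; since $X_j$ is $\mathcal{B}_j$-measurable, $X_{n,k}$ is $\mathcal{F}_{n,k}$-measurable; and the reverse-martingale condition $\E[X_j \mid \mathcal{B}_{j+1}] = 0$ becomes $\E[X_{n,k} \mid \mathcal{F}_{n,k-1}] = 0$ under the substitution $j = n-k$. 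Thus $(X_{n,k}, \mathcal{F}_{n,k})_{k=1}^n$ is a (forward) martingale-difference triangular array whose partial sum is $S_n$, whose total variance is $\sigma_n^2$, and whose conditional quadratic variation $\sum_{k=1}^n \E[X_{n,k}^2 \mid \mathcal{F}_{n,k-1}]$ coincides with $V_n$.

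Hypotheses (i) and (ii) translate, under this relabeling, into the standard conditional-variance normalization and Lindeberg condition for the triangular array $(X_{n,k})$. Applying the classical triangular-array martingale CLT then yields $S_n/\sigma_n \to^d \mathcal{N}(0,1)$; that is, the distribution function of $S_n/\sigma_n$ converges pointwise to $\Phi$ at every continuity point. Because $\Phi$ is continuous on all of $\R$, Polya's theorem upgrades pointwise convergence to the uniform convergence asserted in the statement.

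If one preferred a self-contained proof without invoking time reversal, the direct approach would be to adapt Brown's characteristic-function argument to the reverse setting: first truncate $X_k$ at level $\epsilon \sigma_n$, with (ii) ensuring the truncation is negligible; expand
\[
\E\bigl[\exp(it S_n/\sigma_n)\bigr] = \E\Bigl[\prod_{k=0}^{n-1} \exp(it X_k/\sigma_n)\Bigr]
\]
using the bound $|e^{iy} - (1 + iy)\,e^{-y^2/2}| \le C(|y|^3 \wedge y^2)$; then use $\E[X_k \mid \mathcal{B}_{k+1}] = 0$ via iterated conditioning (from $k = n-1$ inward, opposite to the forward case) to reduce the expectation to $\E[\exp(-t^2 V_n/(2 \sigma_n^2))]$; finally apply (i) and dominated convergence to conclude the limit is $e^{-t^2/2}$.

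Main obstacle: in the direct approach, the bookkeeping of iterated conditional expectations is done in reverse order, and one must keep careful track of which variables are measurable with respect to which $\sigma$-algebra at each step of the product expansion. The time-reversal reduction removes this difficulty entirely, at the modest price of relabeling the array. I expect no serious analytic obstacle either way, since the statement is essentially a repackaging of the classical MDS CLT in a form adapted to decreasing filtrations.
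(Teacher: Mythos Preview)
The paper does not prove this theorem; it is quoted from Conze--Raugi~\cite[Theorem~5.8]{Conze_Raugi} and described there only as ``a modification of a result of B.~M.~Brown~\cite{Brown} from martingale differences to reverse martingale differences.'' Your time-reversal relabeling is precisely the standard way to effect that modification, and the argument you give is correct: the triangular array $(X_{n,k},\mathcal{F}_{n,k})$ is a forward martingale-difference array, hypotheses (i) and (ii) become the usual conditional-variance and Lindeberg conditions, and Brown's theorem together with P\'olya's theorem yields the uniform convergence.
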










\begin{pfof}{Theorem~\ref{thm:CLT}}

We will apply Theorem~\ref{thm:Brown} with the following identifications:
\begin{itemize}
\item $X_n=\psi_n \circ \TF^n.$
\item $\mathcal{B}_{n}=\TF^{-n}\mathcal{B}.$
\item $\sigma_n^2=\E[(\sum_{i=1}^n \mean{i}{\phi}\circ \TF^i)^2]$ as
  defined earlier, but if $\alpha<\frac15$ then $\sigma_n^2=
  \E[(\sum_{i=1}^n \psi_i\circ \TF^i)^2]+O(1)$ by Lemma~\ref{lem:c} and
  Corollary~\ref{cor:h-bounded}.
\end{itemize}




Let us take $\H_n$ defined in~\eqref{eq:h_n} and $\psi_n$ given
in~\eqref{eq:psi_n}
\[
\psi_n:=\mean{n}{\phi} +\H_n-\H_{n+1} \circ T_{n+1}.
\]




Recall that $\psi_n \circ \TF^n$ is a reverse martingale difference scheme,
{\optbf uniformly bounded in $L^{r_1}(m)$ provided
  $1\le r_1<\frac{1}{2\a}-\frac{1}{2}$ (because so is
  $\H_{k}\circ \TF^{k}$, see the second item in Corollary
  \ref{cor:h-bounded}).}
Once we establish $(i)$ and $(ii)$ it follows that $\lim_{n\to \infty}
\frac{1}{\sigma_n} \sum_{j=1}^{n} \psi_j \circ \TF^j \to \mathcal{N}(0,1)$
in distribution. Finally, since $[\sum_{j=1}^{n} \mean{j}{\phi} \circ
  \TF^j]-[\sum_{j=1}^{n} \psi_j \circ \TF^j]=\H_{n+1}\circ \TF^{n+1}$ is
uniformly bounded in $L^{2}$ if $\a<1/5$, we conclude that
%
$\lim_{n\to \infty} \frac{1}{\sigma_n} \sum_{j=1}^{n} \mean{j}{\phi} \circ
\TF^j \to \mathcal{N}(0,1)$ in distribution as well.

We will now verify conditions $(i)$ and $(ii)$ of Theorem~\ref{thm:Brown}.
We defer to the end of this proof the discussion about the possible choices
for~$\a$ and $\b$, see \eqref{eq:alpha-beta-bounds}.

For condition $(ii)$ we begin by noticing that the functions $(\psi_n\circ
\TF^n)^2$ have a uniformly bounded $L^p$-norm if the same is true
for $(\H_{n+1}\circ \mathcal{T}_{n+1})^2$; this holds provided
$1\le 2p<\frac{1}{2\a}-\frac{1}{2}$, and we also need $p>1$ (for a H\"older
inequality, see below). By Minkowski's inequality,
$\|(\psi_n\circ \TF^n)^2\|_{L^p(m)}$ will therefore be bounded uniformly in
$n$ by some constant $\hat{C}_p$. Then we have by H\"older's and
Tchebycheff's inequality, where $1/p+1/q=1$:
\begin{multline}
  \sigma_n^{-2} \sum_{k=0}^{n-1} \E[(\psi_k\circ \TF^k)^2 \one_{\{
      |\psi_k\circ \TF^k)|> \epsilon \sigma_n\}} ]\le \sigma_n^{-2}
  \ \sum_{k=0}^{n-1} \|(\psi_k\circ \TF^k)^2\|_p m(|\psi_k\circ \TF^k|>
  \epsilon \sigma_n)^{\frac1q} \\ \le \sigma_n^{-2}
 \sum_{k=0}^{n-1} \|(\psi_k\circ \TF^k)^2\|_p \left[\frac{1}{(\epsilon
     \sigma_n)^s} \E( |\psi_k\circ \TF^k|^s)\right]^{\frac{1}{q}}
\\
\label{eq.clt-ii-upper-bound}
\le \sup_k \|\psi_k\circ \TF^k\|_{2p}^2 \sup_k \|\psi_k\circ
\TF^k\|_{s}^{\frac s q}
\frac{n}{\epsilon^{\frac{s}{q}}\sigma_n^{2+\frac{s}{q}}} \le C
\frac{n}{\epsilon^{\frac{s}{q}}\sigma_n^{2+\frac{s}{q}}}
\end{multline}
if $1\le s< \frac{1}{2\alpha}-\frac12:=\tilde{s}(\alpha).$
Since $q=(1-1/p)^{-1}>\frac{1-\alpha}{1-5\alpha}:=\tilde{q}(\alpha)$
provided $\alpha<\frac15$, the largest value we can use for the exponent of
$\sigma_n$ is
$2+\frac{s}{q}=2+\frac{\tilde{s}(\alpha)-\iota}{\tilde{q}(\alpha)+\iota},$
for $0<\iota$ small. {\optbf If we now assume that the variance grows as
  $\sigma_n^2\gtrsim  n^{\beta}$, then we need
  $\beta>\frac{2(\tilde{q}(\alpha)+\iota)}{2\tilde{q}(\alpha)+\tilde{s}(\alpha)+\iota}:=b_{\alpha}(\iota),$
  in order for the upper bound~\eqref{eq.clt-ii-upper-bound} to vanish as
  $n$ tends to infinity.} It is easy to check that when $\tilde{q}(\alpha)$
and $\tilde{s}(\alpha)$ are positive then the function $\iota\mapsto
b_{\alpha}(\iota)$ is decreasing for $\iota>0$, so suffices to require that
$\beta> b_{\alpha}(0)= \frac{4\alpha}{1-\alpha}.$

The hard part lies in establishing~$(i)$. This is in contrast with the
stationary setting where condition $(i)$ is usually a straightforward
consequence of the ergodic theorem.

For $(i)$, we first prove that
\begin{equation}\label{eq:condition_i_main-part}
  \frac1{\sigma_n^2}\sum_{j=1}^n \psi_j^2\circ \TF^j\rightarrow 1
  \text{\quad in probability as $n\rightarrow\infty$.}
\end{equation}
That~\eqref{eq:condition_i_main-part} implies $(i)$ follows from
Theorem~\ref{bU}.

We follow~\cite[Lemma 3.3 and proof of Theorem 3.1 (II)]{BC-CLT}, which
uses an argument of Peligrad~\cite{Peligrad}.
Since \mbox{$\psi_j=\mean{j}{\phi}+\H_j-\H_{j+1}\circ T_{j+1}$},
\begin{eqnarray*}
  \psi_j^2 &=&\mean{j}{\phi}^2+2\mean{j}{\phi} \H_j +\H_j^2+\H_{j+1}^2\circ
               T_{j+1}-2\H_{j+1}\circ T_{j+1}(\mean{j}{\phi} +\H_j)
  \\
           &=& \mean{j}{\phi}^2 +2\mean{j}{\phi} \H_j +\H_j^2+\H_{j+1}^2\circ
               T_{j+1}-2\H_{j+1}\circ T_{j+1} (\psi_j +\H_{j+1}\circ T_{j+1})
  \\
           &=& \mean{j}{\phi}^2  +(\H_j^2 -\H_{j+1}^2 \circ T_{j+1}) -2\psi_j \cdot
               \H_{j+1} \circ T_{j+1}  +2\mean{j}{\phi}  \H_j.
\end{eqnarray*}

Therefore
\begin{eqnarray*} 
  \sum_{j=1}^n \psi_j^2\circ \TF^j &=&
 \left(\H_1^2\circ \TF_{1} - \H_{n+1}^2 \circ \TF_{n+1}\right)
                                     -
                                     \left[\sum_{j=1}^n \psi_j\circ \TF^j
                                     \cdot \H_{j+1} \circ \TF^{j+1}\right]
  \\
                                 &+&
                                     \left[\sum_{j=1}^n \mean{j}{\phi}^2\circ \TF^j\right]
                                     +
                                     2  \left[\sum_{j=1}^n (\mean{j}{\phi} \cdot \H_{j}) \circ \TF^{j}\right].
\end{eqnarray*}

By Corollary~\ref{cor:h-bounded}, $\H_n\circ \mathcal{T}^n$ is uniformly
bounded in $L^{2}$ for $\a < \frac 1 5$ , so $\frac{1}{\sigma_n^2}
\H_{n+1}^2\circ \TF^{n+1} \to 0$ in probability.

Next we show that
\begin{equation} \label{eq:psi_j.h_j+1}
  \frac1{\sigma_n^2}\left[\sum_{j=1}^n \psi_j\circ \TF^j
    \cdot \H_{j+1} \circ \TF^{j+1}\right]
  \rightarrow 0 \text{\qquad in probability.}
\end{equation}

Define
\[
\H_j^{\epsilon}:=\H_j \one_{\{ |\H_j|\le \epsilon \sigma_n\}}.
\]
By Lemma~\ref{lem:peligrad-computation},
\[
U_n^2:=\int \left(\sum_{j=1}^n[ \psi_j\circ\TF^j \cdot
  \H_{j+1}^{\epsilon}\circ \TF^{j+1}]\right)^2 = \int
\sum_{j=1}^n[\psi_j\circ\TF^j \cdot \H_{j+1}^{\epsilon}\circ \TF^{j+1}]^2.
\]
Hence, using Lemma~\ref{lem:c} for the equality in the next computation
(note that $\H_{k}\circ \TF^{k} \in L^2$ if $\a < \frac 1 5$),
\begin{multline}\label{eq:V_n-bound}
  U_n^2\le
  \epsilon^2 \sigma_n^2 \sum_{j=1}^n \int \psi_j^2\circ \TF^j \\
  = \epsilon^2 \sigma_n^2 \left[\int (\sum_{j=1}^n \mean{j}{\phi}\circ\TF^j)^2
    + \int \H_1^2\circ \TF^1 - \int \H_{n+1}^2\circ \TF^{n+1}\right]
  \le \epsilon^2 \sigma_n^4.
\end{multline}
%
For any $a>\epsilon$ we obtain, using Tchebycheff's inequality in the third
and fourth lines below, the inequality \eqref{eq:V_n-bound}, and that
$\H_j\circ \TF^j$ is uniformly bounded in $L^{r}$ by some constant
$\hat{D}$ (Corollary~\ref{cor:h-bounded})
\begin{eqnarray*}
  m\left(\left|\frac1{{\sigma_n^2}}\sum_{j=1}^n \psi_j\circ \TF^j \cdot \H_{j+1}
      \circ \TF^{j+1}\right|>a\right)\hspace{-7cm}\\
  &\le &m\left(\max_{1\le j\le n} \left|\H_{j+1}\circ
      \TF^{j+1}\right|>\epsilon \sigma_n\right)+
  m\left(\left|\frac1{\sigma_n^2}\sum_{j=1}^n \psi_j \circ \TF^j
      \cdot \H_{j+1}^{\epsilon}\circ \TF^{j+1}\right|>a\right)\\
  &\le&
  \sum_{j=1}^n m(|\H_{j+1}\circ \TF^{j+1}|>\epsilon \sigma_n)
  +\frac{1}{a^2 \sigma_n^4}  U_n^2\\
  &\le&
  \frac{n}{(\epsilon \sigma_n)^{r}} \left({\max_{1\le j\le n} \int
    |\H_{j+1}\circ \TF^{j+1}|^{r}}\right) +\frac{\epsilon^2}{a^2} \le
  \frac{n \hat{D}^r}{(\epsilon \sigma_n)^{r}} +\frac{\epsilon^2}{a^2}.
\end{eqnarray*}
Take $a=\sqrt{\epsilon}$; {\optbf if we use that $\sigma_n^2\gtrsim
  n^{\beta}$, then $\beta>\frac{2}{r}$ with $1\le
  r<\frac{1}{2\a}-\frac{1}{2}$}, that is $\beta > \frac{4\a}{1-\a}$, allows
us to obtain~\eqref{eq:psi_j.h_j+1}.

Finally, we show that
\begin{equation}\label{eq:SB2}
  \frac1{\sigma_n^2}\sum_{j=1}^n (\mean{j}{\phi}^2 +
  2\mean{j}{\phi} \H_j)\circ \TF^j\rightarrow 1 \text{\quad in probability}.
\end{equation}

%
%
%
%
%
%

We know from our Strong Borel-Cantelli Theorem~\ref{thm:SBC} that
%
%
\begin{equation}\label{eq:SB2a}
  \sum_{j=1}^n \mean{j}{\phi}^2\circ \TF^j = \sum_{j=1}^n \E[\mean{j}{\phi}^2\circ \TF^j] +
  o(n^{\frac 1 2 + \epsilon}) \text{\quad $m$-a.e.}
\end{equation}


We will show in Lemma~\ref{lem:phi-H} that
\begin{equation}\label{eq:SB3}
\frac{1}{\sigma_n^2}\left(\sum_{j=1}^n (\mean{j}{\phi} \H_j)\circ \TF^j
  -\sum_{j=1}^n \E[ (\mean{j}{\phi} \H_j)\circ \TF^j]\right) \to 0 \text{\ in
  probability}.
\end{equation}

In view of Lemma \ref{lem:e}, equations \eqref{eq:SB2} and
\eqref{eq:SB3} imply
$\frac{1}{\sigma_n^2} [\sum_{j=1}^n \mean{j}{\phi}^2\circ
\TF^j+2\sum_{j=1}^n (\mean{j}{\phi} \H_j)\circ \TF^j ] \to 1$ in
probability.
\end{pfof}

\begin{lemma}\label{lem:phi-H}
  For $0 < \alpha < 1/5$ and the variance growing as
  $\sigma_n^2\gtrsim  n^{\beta}$ with $\beta>\frac{1}{2(1-2\alpha)}$, we
  have
  \begin{equation}\nonumber
    \frac{1}{\sigma_n^2}\left(\sum_{j=1}^n (\mean{j}{\phi} \H_j)\circ \TF^j
      -\sum_{j=1}^n \E[ (\mean{j}{\phi} \H_j)\circ \TF^j]\right) \to 0
    \text{\ in probability}.
  \end{equation}
\end{lemma}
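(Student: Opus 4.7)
The plan is to apply Tchebycheff's inequality to $Y_j := (\mean{j}{\phi}\H_j)\circ \TF^j$: it suffices to prove
\[
  \Var\!\left(\sum_{j=1}^n Y_j\right) = o(\sigma_n^4) = o(n^{2\beta}),
\]
and I would expand $\Var(\sum_j Y_j) = \sum_j \Var(Y_j) + 2\sum_{i<j}\operatorname{Cov}(Y_i, Y_j)$, handling the diagonal and off-diagonal contributions separately.

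The diagonal is immediate: $\|\mean{j}{\phi}\|_\infty \le 2\|\phi\|_\infty$ together with the $L^2$-bound $\|\H_j\circ \TF^j\|_2 \le C$ from Corollary~\ref{cor:h-bounded} (valid for $\alpha < 1/5$) gives $\E[Y_j^2] \le C$, so $\sum_j \Var(Y_j) = O(n) = o(n^{2\beta})$, since $\beta > 1/(2(1-2\alpha)) > 1/2$.

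For the off-diagonal contribution, since $\mathcal{B}_j = \TF^{-j}\mathcal{B} \subset \mathcal{B}_i$ when $i < j$ and $Y_j$ is $\mathcal{B}_j$-measurable, I would condition on $\mathcal{B}_j$ and apply the transfer-operator identity $\E[f\circ\TF^i \mid \mathcal{B}_j] = \bigl[\P_{i+1}^{j-i}(f\,\P^i\one)/\P^j\one\bigr]\circ\TF^j$ to obtain
\[
  \operatorname{Cov}(Y_i, Y_j) = \int \mean{j}{\phi}\,\H_j \cdot \P_{i+1}^{j-i}\!\big[(\mean{i}{\phi}\H_i - \E Y_i)\,\P^i\one\big]\,dm.
\]
H\"older's inequality bounds this by $\|\mean{j}{\phi}\H_j\|_q\cdot \|\P_{i+1}^{j-i}[\cdots]\|_p$ with $1/p+1/q=1$, and the first factor is uniformly bounded when $q < 1/(2\alpha)$ (again by Corollary~\ref{cor:h-bounded}). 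For the $L^p$-factor I would use the decomposition $\H_i\,\P^i\one = \sum_{k<i}P_i\cdots P_{k+1}(A_k - B_k)$ from the proof of Lemma~\ref{lem:pointwise-bound-for-H_n}, in which $A_k, B_k\in\cone$ have uniformly bounded integrals; for each $k$, Corollary~\ref{cor:C1*Pk-applications} rewrites $\mean{i}{\phi}\cdot P_i\cdots P_{k+1}(A_k)$ modulo its mean as a difference of cone functions, and the residual needed to exchange the scalar mean for $\E Y_i\cdot \P^i\one$ involves $\one-\P^i\one$, which is itself a difference of two cone elements with equal integral. Proposition~\ref{prop:decay_Lp} then yields an $L^p$-decay of order $(j-i)^{1-1/(p\alpha)}$ for each summand.

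The main obstacle is summing over the $i$ values of $k$, since a naive term-by-term bound produces an extra factor of $i$ per pair $(i,j)$, which is too lossy to match the target threshold. To sharpen the estimate I would use that the intrinsic $L^p$-norm of each summand $\mean{i}{\phi}\cdot P_i\cdots P_{k+1}(A_k - B_k)$ itself decays in $i-k$, obtained by interpolating (as in Lemma~\ref{lem:L2bounds}) the $L^1$-decay $(i-k)^{1-1/\alpha}$ from Theorem~\ref{thm:decay} against the pointwise cone bound $\lesssim x^{-\alpha}$, so that $k$ far from $i$ contributes negligibly. Taking the minimum of the two decay rates per $k$, summing over $k<i$ and $i<j$, and optimizing the H\"older exponent $p$ subject to $q<1/(2\alpha)$, produces the claimed threshold $\beta > 1/(2(1-2\alpha))$; this exponent optimization, analogous to that performed for condition~(ii) in the proof of Theorem~\ref{thm:CLT}, is the technical core of the argument.
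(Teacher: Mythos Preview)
Your framework matches the paper's: Tchebycheff reduces the claim to $\Var(\sum_j Y_j)=o(\sigma_n^4)$, the diagonal is $O(n)$ via the uniform $L^2$-bound on $\H_j\circ\TF^j$, and the off-diagonal covariances are expressed through the transfer operator exactly as you write. The divergence is in how the factor of $i$ arising from the $i$-term sum that defines $\H_i\P^i\one$ is handled. The paper does \emph{not} pursue the $(i-k)$-decay you suggest: it records (Lemma~\ref{lem:decay2}, proved in the appendix) the cruder bound $\|\P_{i+1}^{j-i}[\,\cdot\,]\|_2\le C^*\,i\,(j-i)^{-\alpha^*}$ with $\alpha^*=\tfrac{1-2\alpha}{2\alpha}$, accepts the factor $i$, and then splits the row sum over $i$ at $j-j^{\delta}$---the near block $i\in[j-j^\delta,j-1]$ bounded trivially by $Cj^\delta$, the far block by $Cj\cdot(j^\delta)^{1-\alpha^*}$---and optimizes $\delta=1/\alpha^*$ to obtain $\E|S_n-E_n|^2\lesssim n^{1/(1-2\alpha)}$, whence the stated threshold. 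Your refinement, taking per summand the minimum of the $(j-i)$-decay from Proposition~\ref{prop:decay_Lp} and the $(i-k)$-decay obtained via $L^1$-contraction of $\P_{i+1}^{j-i}$ followed by Lemma~\ref{lem:L2bounds}, is valid and in fact sharper: one gets $\sum_{k}\min\{j-i,i-k\}^{1-1/(p\alpha)}\lesssim (j-i)^{2-1/(p\alpha)}$, which is summable over $i<j$ once $p\alpha<1/3$, achievable under the constraint $q<1/(2\alpha)$ precisely when $\alpha<1/5$, so that $\E|S_n-E_n|^2=O(n)$ and only $\beta>1/2$ is needed. Thus the paper's route is more elementary but looser; yours, if carried through, would actually improve the hypothesis of the lemma beyond what you claim.
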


\begin{proof}
  Write $S_n= \sum_{j=1}^n (\mean{j}{\phi} \H_j)\circ \TF^j$ and
  $E_n= \sum_{j=1}^n \E[ (\mean{j}{\phi} \H_j)\circ \TF^j]$ and estimate
\begin{eqnarray*}
  \E( |S_n-E_n|> \sigma_n^2 \epsilon)&=&\E( |S_n-E_n|^2> \sigma_n^4
                                        \epsilon^2)
  \\
     &\le& \frac{1}{\sigma_n^4 \epsilon^2}\E(|S_n-E_n|^2).
\end{eqnarray*}

When we expand $\E(|S_n-E_n|^2)$ we have, as usual, the diagonal terms and
a double summation of off-diagonal terms:
\begin{multline}\nonumber
  \E(|S_n-E_n|^2)=\sum_{j=1}^n \E( (\mean{j}{\phi} \H_j)\circ \TF^j -m [(\mean{j}{\phi}
  \H_j)\circ \TF^j)]^2)
  \\
  + 2\sum_{j=1}^n \sum_{i=1}^{j-1}  \int [(\mean{j}{\phi} \H_j)\circ \TF^j -m ((\mean{j}{\phi}
  \H_j)\circ \TF^j)][(\mean{i}{\phi} \H_i)\circ \TF^i -m ((\mean{i}{\phi} \H_i)\circ
  \TF^i)]dx.
\end{multline}
The sum of diagonal terms is $O(n)$ as
$(\mean{j}{\phi}\H_j )\circ \TF^j \in L^2 (m)$ with uniformly bounded norm
if $\a < 1/5$. {\optbf Therefore, if $\sigma_n^2 \approx n^{\beta},$ then
  the exponent $\beta$ must verify $\beta>1/2.$}

We now consider
\begin{eqnarray*}
  \sum_{j=1}^n \sum_{i=1}^{j-1}  \int [(\mean{j}{\phi} \H_j)\circ \TF^j -m ((\mean{j}{\phi}
  \H_j)\circ \TF^j)][(\mean{i}{\phi} \H_i)\circ \TF^i -m ((\mean{i}{\phi} \H_i)\circ
  \TF^i)]dx
  \\
  = \sum_{j=1}^n \sum_{i=1}^{j-1}  \int [\mean{j}{\phi} \H_j -m ((\mean{j}{\phi}
  \H_j)\circ \TF^j)]\circ \TF^j \cdot [\mean{i}{\phi} \H_i- m ((\mean{i}{\phi} \H_i)\circ
  \TF^i)]\circ
  \TF^i dx
  \\
  = \sum_{j=1}^n \sum_{i=1}^{j-1}  \int [\mean{j}{\phi} \H_j -m
  ((\mean{j}{\phi} \H_j)\circ \TF^j)]\circ \TF_{i+1}^{j-i} \cdot [\mean{i}{\phi} \H_i -m ((\mean{i}{\phi}
  \H_i)\circ \TF^i )] \cdot \P^i
  \one~dx
  \\
  =\sum_{j=1}^n \sum_{i=1}^{j-1}  \int [\mean{j}{\phi} \H_j-m ((\mean{j}{\phi} \H_j)\circ
  \TF^j)] \cdot  \P_{i+1}^{j-i} [ \H_i \mean{i}{\phi} \P^i \one -  m
  ((\mean{i}{\phi} \H_i)\circ \TF^i)\P^i \one ]~dx.
\end{eqnarray*}

%

We will prove in Lemma \ref{lem:decay2} below that $\a < 1/2$ implies
$||\P_{i+1}^{j-i}[\P^i \one \H_i \mean{i}{\phi}- \P^i \one m
  ((\mean{i}{\phi} \H_i)\TF^i)]||_2 \le \frac{C^* i}{(j-i)^{\a^*}},$ where
$C^*$ is a constant depending only on $\a$ and the $C^1$ norm of $\phi$
(and uniform in $i$ and $j$). {\optbf Here the numerator $i$ comes about as
  $1\le i\le j-1$ and $\alpha^*=\frac{1-2\a}{2\a}$ follows from the decay
  Theorem~\ref{thm:decay} and Lemma~\ref{lem:L2bounds}.
  Note also that $||(\mean{j}{\phi} \H_j)-m ((\mean{j}{\phi} \H_j)\circ
  \TF^j)||_2$ is uniformly bounded in $j$ provided $\a<\frac{1}{4}$}, see
Corollary~\ref{cor:h-bounded}.

We have to show that each row summation satisfies
\[
|\sum_{i=1}^{j-1} \int [(\mean{j}{\phi} \H_j)-m ((\mean{j}{\phi} \H_j)\circ \TF^j)]
\P_{i+1}^{j-i} [\P^i \one \H_i \mean{i}{\phi} - \P^i \one m ((\mean{i}{\phi} \H_i)\circ
\TF^i )]~dx|\le j^{\chi}
\]
where $n^{1+\chi}=o(\sigma_n^4)$ otherwise the double summation
contributes a term which is too large.

So we divide the sum into two parts, with $0 < \delta<1$
\[
\sum_{i=j-{j^{\delta}}}^{j-1} \int [(\mean{j}{\phi} \H_j)-m ((\mean{j}{\phi}
\H_j)\circ \TF^j)] \P_{i+1}^{j-i} [\P^i \one \H_i \mean{i}{\phi}- \P^i \one m
((\mean{i}{\phi} \H_i)\circ \TF^i)]~dx
\]
\[ + \sum_{i=1}^{j-j^{\delta}} \int [(\mean{j}{\phi} \H_j)-m ((\mean{j}{\phi}
\H_j)\circ \TF^j)] \P_{i+1}^{j-i} [\P^i \one \H_i \mean{i}{\phi}- \P^i \one m
((\mean{i}{\phi} \H_i)\circ \TF^i )]~dx.
\]
We bound the first sum by $C^* j^{\delta}$ using $L^2$ bounds without
decay. The second sum uses our decay estimate (see Lemma~\ref{lem:decay2})
and we get $\sum_{i=1}^{j-j^{\delta}} \frac{C^* i}{(j-i)^{\alpha^*}} \le
C^* j^{1-(\alpha^*-1)\delta}=C^*\ j^{1+\delta-\alpha^*\delta}$
provided $\alpha^*>1$ ($\iff 0 < \alpha < 1/2$). Then
$|\sum_{i=1}^{j-1} \int [(\mean{j}{\phi} \H_j)-m ((\mean{j}{\phi} \H_j)\circ \TF^j)]
\P_{i+1}^{j-i} [\P^i \one \H_i \mean{i}{\phi}- \P^i \one m ((\mean{i}{\phi} \H_i)\circ
\TF^i )]~dx|\le C(j^{\delta}+j^{1+\delta-\alpha^*\delta})$ which is
lowest for $\delta=1/\alpha_*$. We obtain
\begin{multline*}
  |\sum_{j=1}^n \sum_{i=1}^{j-1} \int [(\mean{j}{\phi} \H_j)\circ \TF^j -m
  ((\mean{j}{\phi} \H_j)\circ \TF^j)][(\mean{i}{\phi} \H_i)\circ \TF^i -m ((\mean{i}{\phi}
  \H_i)\circ \TF^i)]dx| \\ \le C^* \ n^{1+1/\alpha_*} = C^*\
  n^{1/(1-2\alpha)}
\end{multline*}
so
\begin{equation*}
  E(|S_n-E_n|^2) \le C n^{1/(1-2\alpha)}.
\end{equation*}
{\optbf
  By dividing for $\sigma_n^4$ and asking again for a growth like
  $\sigma_n^2\gtrsim  n^{\beta}$ we have now that
  $\beta>\frac{1}{2(1-2\a)}.$}
This estimate allows us to show that
$\frac{1}{\sigma_n^2}\left(\sum_{j=1}^n (\mean{j}{\phi} \H_j)\circ \TF^j -
  \sum_{j=1}^n E[ (\mean{j}{\phi} \H_j)\circ \TF^j]\right)\to 0$ in
probability.
%
\end{proof}

%

We now collect the various inequalities involving $\a$ and $\beta$, which
is the scaling of $\sigma_n^2\gtrsim  n^{\beta}$:
\begin{itemize}
\item for our proof of condition (ii) in Brown's Theorem~\ref{thm:Brown} we
  need $\a<\frac 1 5$ and $\beta>\frac{4\alpha}{1-\alpha}$, $\beta> \frac 1
  2$;

\item in Peligrad's argument we needed $\a< \frac 1 5$ and $\beta >
  \frac{4\a}{1-\a}$;

\item in Lemma \ref{lem:phi-H}, using that $\a< \frac 1 5$, we have $\beta
  > \frac{1}{2(1-2\a)}$.

\item for Theorem~\ref{bU} we use $\b > \frac 1 2$, and $\a< \frac 1 9$ to
  obtain a uniform $L^4$-bound for
  $\psi_n\circ \TF^n=\mean{n}{\phi}\circ \TF^{n} +\H_n \circ \TF^{n}
  -\H_{n+1} \circ \TF^{n+1}$ (see Corollary~\ref{cor:h-bounded}).
\end{itemize}
Therefore, it is sufficient to take
\begin{equation}\label{eq:alpha-beta-bounds}
  0 <\a < \alphaupperbound \text{ and } \b > \max\left\{\frac{1}{2},
    \frac{4\a}{1-\a}, \frac{1}{2(1-2\a)}\right\} =
  \frac{1}{2(1-2\a)}.
\end{equation}


To conclude the proof we need Theorem~\ref{bU} to show
that~\eqref{eq:condition_i_main-part} implies condition (i) of Brown's
Theorem~\ref{thm:Brown}, and the statement of Lemma \ref{lem:decay2}.

\begin{lemma}\label{lem:decay2}
  For $1\le p < 1/\a$
  \begin{equation}\nonumber
    \|\P_k^n\left([\P^i \one \H_i \mean{i}{\phi}- \P^i \one m ((\mean{i}{\phi} \H_i)\circ
      \TF^i )]\right)\|_p\le i\ C_{\alpha,p} \ C_{\phi} \
    n^{-\frac{1}{p\alpha}+1}
    \left(\log n\right)^{\frac{1}{\alpha}\frac{1-\a p}{p- \a p}}
\end{equation}
\end{lemma}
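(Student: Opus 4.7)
The plan is to reduce the estimate to $i$ separate applications of the $L^p$-decay result Proposition~\ref{prop:decay_Lp}, exploiting the fact that $\H_i\,\P^i\one$ is explicitly a sum of $i$ pieces. First, expand via the formula~\eqref{eq:h_n}: setting
\[
u_\ell := \P_{i-\ell}^{\ell+1}\bigl(\mean{i-\ell-1}{\phi}\,\P^{i-\ell-1}\one\bigr), \qquad \ell = 0,\dots,i-1,
\]
one has $\H_i\,\P^i\one = \sum_{\ell=0}^{i-1} u_\ell$, and by duality together with the definition of $\mean{k}{\phi}$ each $u_\ell$ has zero mean. By linearity, the expression inside $\P_k^n$ splits as
\[
\P^i\one\,\H_i\,\mean{i}{\phi} - \P^i\one\,m\bigl((\mean{i}{\phi}\H_i)\circ\TF^i\bigr) = \sum_{\ell=0}^{i-1}\bigl[\mean{i}{\phi}\,u_\ell - \P^i\one\,m(\mean{i}{\phi}\,u_\ell)\bigr],
\]
so it suffices to prove the claimed bound, uniformly in $\ell$ and $i$, for each summand; a triangle inequality then supplies the factor of $i$.

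For a fixed $\ell$, I would apply Corollary~\ref{cor:C1*Pk-applications} (really Lemma~\ref{lem:C1*Pk}) twice. In the first application $\mean{i-\ell-1}{\phi}$ is $C^1$ with norm controlled by $\|\phi\|_{C^1}$, and $\P^{i-\ell-1}\one \in \cone$ has mean $1$; this yields a decomposition $\mean{i-\ell-1}{\phi}\,\P^{i-\ell-1}\one = A' - B'$ with $A',B'\in\cone$, equal means, and $L^1$-norms bounded uniformly in $i,\ell$. Since $\cone$ is invariant under each $P_k$, $u_\ell = \P_{i-\ell}^{\ell+1}A' - \P_{i-\ell}^{\ell+1}B'$ inherits the same properties. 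In the second application I multiply by the $C^1$ function $\mean{i}{\phi}$, using each of $\P_{i-\ell}^{\ell+1}A', \P_{i-\ell}^{\ell+1}B'$ as the cone factor; this produces a decomposition $\mean{i}{\phi}\,u_\ell = \tilde A - \tilde B$ with $\tilde A, \tilde B\in\cone$ of uniformly bounded $L^1$-norm. Finally, adjusting by $\P^i\one \cdot m(\mean{i}{\phi}\,u_\ell) \in \cone$ (times a bounded scalar) to equalize the means, I obtain
\[
\mean{i}{\phi}\,u_\ell - \P^i\one\,m(\mean{i}{\phi}\,u_\ell) = F_\ell - G_\ell, \qquad F_\ell, G_\ell \in \cone, \quad m(F_\ell) = m(G_\ell),
\]
with $\|F_\ell\|_1 + \|G_\ell\|_1 \le K_0$ depending only on $\alpha$ and $\|\phi\|_{C^1}$.

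Proposition~\ref{prop:decay_Lp} now gives
\[
\|\P_k^n(F_\ell-G_\ell)\|_p \le C_{\alpha,p}\,K_0\,n^{1-\frac{1}{p\alpha}}(\log n)^{\frac{1}{\alpha}\frac{1-\alpha p}{p-\alpha p}},
\]
and summing the $i$ contributions finishes the proof. The one delicate point, which I expect to be the principal obstacle, is the uniformity of the constants produced by Lemma~\ref{lem:C1*Pk}: they depend only on $|\mean{\cdot}{\phi}|_\infty$, $|(\mean{\cdot}{\phi})'|_\infty$ (both bounded by $2\|\phi\|_{C^1}$) and on the $L^1$-norm of the cone factor. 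The latter is $m(\P^{i-\ell-1}\one)=1$ in the first application, and is bounded by the uniform bound produced in the first step when used in the second application; so this bookkeeping goes through, essentially as in the proof of Lemma~\ref{lem:pointwise-bound-for-H_n}.
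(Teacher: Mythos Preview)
Your proposal is correct and follows essentially the same route as the paper's own proof: expand $\H_i\,\P^i\one$ into its $i$ constituent terms, apply Lemma~\ref{lem:C1*Pk} twice (first to $\mean{i-\ell-1}{\phi}\,\P^{i-\ell-1}\one$, then to $\mean{i}{\phi}$ times the resulting cone functions), absorb the constant multiple of $\P^i\one$ to equalize means, and invoke Proposition~\ref{prop:decay_Lp} termwise. Your discussion of the uniformity of the constants is exactly the bookkeeping the paper also carries out.
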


\begin{proof}
  See Section~\ref{sec.proof-in-appendix} in the Appendix.
\end{proof}

\begin{thm}\label{bU}
  Assume $\psi_j\circ \TF^j$ is uniformly bounded in $L^4$ and
  $\sigma_n^2 = \E(\sum_{j=1}^n \psi_j^2\circ \TF^j) + O(1) \gtrsim
  n^{\beta}$ with $\beta > \frac 1 2$. Then
  \[ \frac{1}{\sigma_n^2} \sum_{j=1}^n ( \psi^2_j \circ \TF^j- \E[ \psi^2_j
    \circ \TF^j |\mathcal{B}_{j+1}]) \to 0 \text{\quad in probability.}
  \]
\end{thm}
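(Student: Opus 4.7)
My plan is to reduce convergence in probability to $L^2$-convergence by exploiting that the centered summands form an orthogonal system (a reverse martingale difference array) with respect to the decreasing filtration $\{\mathcal{B}_n\}$.

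Set $Y_j := \psi_j^2 \circ \TF^j - \E[\psi_j^2 \circ \TF^j \mid \mathcal{B}_{j+1}]$. First I would verify that $Y_j$ is $\mathcal{B}_j$-measurable (since $\psi_j^2 \circ \TF^j$ is $\mathcal{B}_j$-measurable and $\mathcal{B}_{j+1}\subset \mathcal{B}_j$) and satisfies $\E[Y_j \mid \mathcal{B}_{j+1}]=0$ by construction, so $\{Y_j\}$ is a reverse martingale difference sequence for the decreasing filtration $\mathcal{B}_n=\TF^{-n}\mathcal{B}$.

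The key step is orthogonality: for $i<j$, the random variable $Y_j$ is $\mathcal{B}_j$-measurable and $\mathcal{B}_j\subset \mathcal{B}_{i+1}$, so
\[
\E[Y_i Y_j] = \E\bigl[Y_j \cdot \E[Y_i\mid \mathcal{B}_{i+1}]\bigr] = 0.
\]
Combined with the uniform $L^4$-bound on $\psi_j\circ \TF^j$ and the contraction property of conditional expectation,
\[
\|Y_j\|_2^2 \le \|\psi_j^2\circ \TF^j\|_2^2 = \|\psi_j\circ \TF^j\|_4^4 \le C,
\]
with $C$ independent of $j$. Pythagoras then gives
\[
\E\Bigl[\Bigl(\sum_{j=1}^n Y_j\Bigr)^2\Bigr] = \sum_{j=1}^n \E[Y_j^2]\le Cn.
\]

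Finally, Chebyshev's inequality together with the hypothesis $\sigma_n^2 \gtrsim n^{\beta}$ yields, for every $\epsilon>0$,
\[
m\Bigl(\Bigl|\sigma_n^{-2}\sum_{j=1}^n Y_j\Bigr|>\epsilon\Bigr) \le \frac{C n}{\epsilon^2\sigma_n^4} \lesssim \frac{1}{\epsilon^2}\, n^{1-2\beta},
\]
which tends to $0$ because $\beta>1/2$. This is in fact an $L^2$-convergence statement, so the probabilistic conclusion follows immediately. I do not anticipate a real obstacle: the statement is calibrated so that the uniform $L^4$-bound supplies the diagonal estimate and the growth condition $\beta>1/2$ absorbs the linear growth of the variance of the orthogonal sum; the reverse-martingale orthogonality is the only structural ingredient needed.
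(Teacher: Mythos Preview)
Your proof is correct and essentially identical to the paper's: you define the same centered variables $Y_j$ (the paper calls them $V_j$), observe they form a reverse martingale difference sequence, use Pythagoras/orthogonality together with the uniform $L^4$-bound to get $\E[(\sum Y_j)^2]\lesssim n$, and conclude via Chebyshev and $\sigma_n^2\gtrsim n^\beta$ with $\beta>1/2$. Your write-up is in fact slightly more explicit about the orthogonality step and the bound $\|Y_j\|_2\le\|\psi_j^2\circ\TF^j\|_2$.
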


\begin{proof}
  Define
  \[
    V_k:=\psi^2_k \circ \TF^k- \E[ \psi^2_k \circ \TF^k
    |\mathcal{B}_{k+1}], \qquad T_n:=\sum_{j=1}^n V_j.
  \]
  Note that $\E[V_k | \mathcal{B}_{k+1}]=0$, so $V_k$ is a reverse
  martingale difference; by Pythagoras,
  $\E(V_k^2)\le \E((\psi^2_k \circ \TF^k)^2)$. Applying Pythagoras again,
%
%
%
%
%
%
   \[
     \E\Big[\big(\sum_{j=1}^n V_j\big)^2\Big] = \sum_{j=1}^n \E(V_j^2) \le
     \sum_{j=1}^n \E(\psi^4_j \circ \TF^j) \lesssim n
   \]
   therefore, by Tchebycheff
   \[
   P(|T_n|> \sigma_n^2 \epsilon) =P(|T_n|^2> \sigma_n^4
   \epsilon^2)\lesssim \frac{n}{\epsilon^2\sigma_n^4}
 \]
 Since we assumed that $\sigma_n^2\gg n^{1/2}$, it follows that
 \[
   \frac{1}{\sigma_n^2} T_n \to 0 \text{ in probability}
 \]
 as claimed.
\end{proof}

\section{Central Limit Theorem for nearby maps}\label{sec:nearby-maps}

\begin{thm}\label{thm:linear-variance}
  Given $\bbb \in (0, \alphaupperbound)$ and $\phi \in C^1([0,1])$, if
  $\phi$ is not a coboundary (up to a constant) for $T_\bbb$ there exists
  $\epsilon >0$ such that for all parameters
  $\beta_k\in (\bbb-\epsilon,\bbb+ \epsilon)$ the variance grows linearly
  for {\it any} sequential system formed from concatenation of the maps
  $T_{\beta_k}$.

  Therefore, by Theorem~\ref{thm:CLT},
  the CLT holds.
\end{thm}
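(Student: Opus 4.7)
The plan is to compare the non-stationary variance $\sigma_n^2$ with the variance of the stationary concatenation of $T_\bbb$ with itself, exploiting linear growth in the stationary case together with a perturbation argument in the parameters $\beta_k$.

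First, I would recall the stationary fact: for $\bbb<1/9<1/2$, $T_\bbb$ admits an absolutely continuous invariant probability measure $\mu_\bbb$ with summable decay of correlations on $C^1$ observables, so the Green--Kubo series
\[
\sigma^2_\bbb:=\mu_\bbb\!\left((\phi-\mu_\bbb(\phi))^2\right)+2\sum_{k\ge 1}\mu_\bbb\!\left((\phi-\mu_\bbb(\phi))\cdot(\phi-\mu_\bbb(\phi))\circ T_\bbb^k\right)
\]
converges and vanishes precisely when $\phi$ is a coboundary for $T_\bbb$. Since $P_\bbb^n\one\to h_\bbb$ in $L^1$ with summable rate, the Lebesgue variance $\sigma^2_{n,\bbb}:=\Var\!\left(\sum_{k=1}^n \mean{k}{\phi}\circ T_\bbb^k\right)$ agrees with its $\mu_\bbb$-counterpart up to a uniformly bounded error, yielding $\sigma^2_{n,\bbb}=n\sigma^2_\bbb+O(1)$.

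Next, I would use Lemma~\ref{lem:e} to expand
\[
\sigma_n^2=\sum_{i=1}^n \E[\mean{i}{\phi}^2\circ \TF^i]+2\sum_{i=1}^n \E[(\H_i\mean{i}{\phi})\circ \TF^i]
\]
and compare it termwise with the analogous stationary expression. All entries reduce to integrals involving the compositions $\P^k$, so everything is controlled by the perturbation estimate of Lepp\"anen--Stenlund~\cite{Leppanen-Stenlund}, which bounds $\|\P^{k}f-P_\bbb^{k}f\|$ by $\epsilon\cdot\Psi(k)$ for some sublinear $\Psi$ when $\max_j|\beta_j-\bbb|\le\epsilon$. Combined with the uniform $L^2$-bound on $\H_i\circ \TF^i$ from Corollary~\ref{cor:h-bounded}, summing over $i=1,\dots,n$ yields $|\sigma_n^2-\sigma^2_{n,\bbb}|\le C\epsilon\, n+o(n)$. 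Choosing $\epsilon<\sigma_\bbb^2/(2C)$ gives $\sigma_n^2\ge(\sigma_\bbb^2/2)n$ for all large $n$, i.e.\ linear growth with exponent $\beta=1$. Since $\bbb<1/9$ forces $\tfrac{1}{2(1-2\bbb)}<9/14<1$, Theorem~\ref{thm:CLT} applies with $\beta=1$ and the CLT follows.

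The main obstacle is obtaining the perturbation comparison uniformly in $n$: a naive term-by-term replacement of each $P_{\beta_k}$ by $P_\bbb$ compounds errors and could easily grow faster than $n$, so the Lepp\"anen--Stenlund inequality, which is designed precisely to give sublinear control along the entire orbit, is essential. Verifying its hypotheses in our $C^1$, cone-based framework for $\bbb<1/9$ and calibrating $\epsilon$ so that the perturbation error does not swamp the leading linear term is the technical crux of the argument.
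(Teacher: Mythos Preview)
Your strategy differs from the paper's. The paper does not compare $\sigma_n^2$ with the stationary variance via Lemma~\ref{lem:e}; instead it works at the level of each martingale difference $\psi_n$ and shows $\|\psi_n\|_2>\eta/10$ for all large $n$, which gives linear growth directly via $\sigma_n^2=\sum_k\E[\psi_k^2\circ\TF^k]+O(1)$ (Lemma~\ref{lem:c}). The mechanism is: truncate $\H_n$ to its first $N$ terms (the tail is uniformly small by the polynomial decay of Corollary~\ref{cor:C1*Pk-applications}), then invoke Lepp\"anen--Stenlund only for $L^2$-continuity of those \emph{finitely many} terms as $\beta_k\to\bbb$, so that $\H_n$ is close to the stationary $\tilde H_\bbb$ and hence $\psi_n$ is close to $\tilde\psi_\bbb\ne 0$. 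No summation of perturbation errors over $n$ is needed.

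Your route is plausible in outline, but there is a genuine gap at the step where you assert $|\sigma_n^2-\sigma^2_{n,\bbb}|\le C\epsilon\,n+o(n)$. You attribute to \cite{Leppanen-Stenlund} a bound of the form $\|\P^{k}f-P_\bbb^{k}f\|\le \epsilon\,\Psi(k)$ with sublinear $\Psi$; the paper uses that reference only for continuity at a fixed iterate, and even granting such a bound, it does not by itself control the second sum in Lemma~\ref{lem:e}. Each term $\E[(\H_i\mean{i}{\phi})\circ\TF^i]$ involves $\H_i$, which is a sum of $i$ terms built from \emph{all} the operators $P_{\beta_1},\dots,P_{\beta_i}$; comparing it to its stationary analogue requires exactly the truncate-then-perturb argument the paper carries out for $\|\H_n-\tilde H_\bbb\|_2$. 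The uniform $L^2$-bound on $\H_i\circ\TF^i$ from Corollary~\ref{cor:h-bounded} gives boundedness, not an $O(\epsilon)$ perturbation estimate, so ``summing over $i$'' does not yield $C\epsilon\,n$ without that missing work. If you supply that estimate your approach should go through, but at that point you will have essentially reproduced the paper's key lemma; working with $\psi_n$ directly, as the paper does, is cleaner because it avoids accumulating $n$ perturbation errors altogether.
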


\begin{proof}

  Recall the quantities defined by a concatenation of different maps:
  \begin{equation}\nonumber
    \H_n =\frac{1}{\P^n \one} \left[P_n
      (\mean{n-1}{\phi} \P^{n-1} \one) +P_n P_{n-1}
      (\mean{n-2}{\phi} \P^{n-2} 1)+ \dots + P_n P_{n-1} \dots P_1 (\mean{0}{\phi} \P^0
      \one)\right]
  \end{equation}
  and
  \[
  \psi_n :=\mean{n}{\phi} +\H_n-\H_{n+1} \circ T_{n+1}.
  \]

  First assume that the maps all coincide with $T_{\bbb}$ so that
  $P_\bbb^n 1\to h_{\bbb}$ (at a polynomial rate in $L^2$),
  $P_nP_{n-1}...P_{n-k}=P_{\bbb}^k$, where $h_{\bbb}$ is the
  invariant density for $T_{\bbb}$ and $P_{\bbb}$ is the transfer
  operator for $T_{\bbb}$ with respect to Lebesgue measure.
  Furthermore $\mean{n}{\phi}=\phi- m(\phi (T^n_{\bbb}))\to \phi-\int \phi
  h_{\bbb} dx$. Denote the $\H_n$ corresponding to this situation by
  $\H_{\bbb,n}$.

  Note the terms $P_n P_{n-1}...P_{n-j} (\mean{n-j-1}{\phi} \P^{n-j-1} \one)$
  decay at a polynomial rate in $L^2$, $\|P_n P_{n-1}...P_{n-j}
  (\mean{n-j-1}{\phi} \P^{n-j-1} \one)\|_2\le \frac{C}{j^{\tau}}$ for some
  $\tau>1$ for $\bbb < 1/4$, by Proposition~\ref{prop:decay_Lp} and
  Lemma \ref{lem:C1*Pk}. Note that $C$ and $\tau$ may be taken as
  uniform over all $T_{\beta_k}$ if $\beta_k$ is close to~$\bbb$.

  Combining this with the fact that $P_{\bbb}^n \one\to h_{\bbb}$ in
  $L^2$ (and hence $\frac{1}{P_{\bbb}^n \one} \to \frac{1}{h_{\bbb}}$
  in $L^2$ as both $h_{\bbb}$ and $P_{\bbb}^n \one$ are bounded below
  by a positive constant\footnote{These facts, in particular that $P_{\bbb}^n \one$ are uniformly in $n$ bounded from below by a strictly positive constant, are proved in \cite{LSV}. } ), we see that given $\epsilon >0$ there exists an $N$
  such that for all $n>N$, $\H_{\bbb,n}= \frac{1}{h_{\bbb}} [P_{\bbb}
  (h_{\bbb} \phi- \int \phi h_{\bbb} dx) +P_{\bbb}^2 (h_{\bbb} \phi-
  \int \phi h_{\bbb} dx)+ ...+ P_{\bbb}^N (h_{\bbb} \phi- \int \phi
  h_{\bbb} dx)] + \gamma (\bbb,n)$ where $\|\gamma (\bbb,n)\|_2 <
  \epsilon$.  We define $G_{\bbb,N}= \frac{1}{h_{\bbb}} [P_{\bbb}
  (h_{\bbb} \phi- \int \phi h_{\bbb} dx) +P_{\bbb}^2 (h_{\bbb} \phi-
  \int \phi h_{\bbb} dx)+ ...+ P_{\bbb}^N (h_{\bbb} \phi- \int \phi
  h_{\bbb} dx)]$ so that $\H_{\bbb,n}=G_{\bbb,N}+\gamma(\bbb,n)$.

  Now suppose $\phi$ is not a coboundary for $T_{\bbb}$. Denote by
  $\tilde{P}_{\bbb}$ the transfer operator for $T_{\bbb}$ with
  respect to the invariant measure $d\mu_{\bbb}=h_{\bbb} dx$. Then
  $\tilde{P}_{\bbb}^n (\phi)=\frac{1}{h_{\bbb}} P_{\bbb}^n
  (h_{\bbb} \phi)$ where $P_{\bbb}$ is the transfer operator for
  $T_{\bbb}$ with respect to Lebesgue measure.

  Hence $\frac{1}{h_{\bbb}} [P_{\bbb} (h_{\bbb} \phi- \int \phi
  h_{\bbb} dx) +P_{\bbb}^2 (h_{\bbb} \phi- \int \phi h_{\bbb}
  dx)+ ...+ P_{\bbb}^N (h_{\bbb} \phi- \int \phi h_{\bbb}
  dx)]=\sum_{k=1}^N \tilde{P}_{\bbb}^k [\phi-\int \phi
  d\mu_{\bbb}]$. If $\phi$ is not a coboundary then
  $\sum_{k=1}^{\infty} \tilde{P}_{\bbb}^k [\phi-\int \phi
  d\mu_{\bbb}]$ converges to a coboundary $\tilde{H}_{\bbb}$ so
  that
  \[
  \phi= \tilde{\psi}_{\bbb}+ \tilde{H}_{\bbb}\circ T_{\bbb}-\tilde{H}_{\bbb}
  \]
  defines a martingale difference sequence $\{
  \tilde{\psi}_{\bbb}\circ T_{\bbb}^n\} $, where
  $\tilde{\psi}_{\bbb}\not=0$ in $L^2$ (as $\phi$ is not a
  coboundary for $T_{\bbb}$). Suppose $\|\tilde{\psi}_{\bbb}\|_2>
  \eta$.

  Choose $N$ large enough that for all $n>N$, $\|
  [H_{\bbb,n}-H_{\bbb,n+1}\circ T_{\bbb}]-
  [\tilde{H}_{\bbb}-\tilde{H}_{\bbb}\circ T_{\bbb}]\|_2
  <\frac{\eta}{20}$ and $\|\tilde{H}_{\bbb}-\sum_{k=1}^N
  \tilde{P}_{\bbb}^k [\phi-\int \phi d\mu_{\bbb}]\|_2 <
  \frac{\eta}{20}$ . Then $\|\psi(\bbb,n)\|_2> \frac{\eta}{2}$ for
  all $n>N$.

  Now we consider a concatenation of maps $T_{\beta_k}$ where
  $\beta_k$ is close to $\bbb$.  The idea is to break $\H_n$ into a
  sum of $N$ terms uniformly close to $G(\bbb,N)$ (no matter what the
  sequence of maps) and a small error.

  Choose all $\beta_k$'s sufficiently close to $\bbb$ that when we
  form a concatenation of the maps $T_{\beta_k}$ we have
  \begin{multline*}
    \|G_{\bbb,N}- \frac{1}{\P^n \one} \big[P_n (\mean{n-1}{\phi} \P^{n-1}
    \one) +P_n P_{n-1} (\mean{n-2}{\phi} \P^{n-2} \one) + \dots \\ + P_n
    P_{n-1}\cdots P_{n-N} (\mean{n-N-1}{\phi} \P^{n-N-1} \one)\big]\|_2 <
    \frac{\eta}{20}.
  \end{multline*}
  We can do this as we have fixed $N$ and the finite terms are
  continuous in $L^2$ as $\beta_k \to \bbb$, see~\cite[Theorem
  5.1]{Leppanen-Stenlund} and Lemmas~\ref{lem:C1*Pk},
  \ref{lem:L2bounds}.

  Recall we also have $\|\gamma(\bbb,n)\|_2 < \frac{\eta}{20}$ for
  all $n\ge N$.

  Using the uniform contraction ($\tau$ and $C$ are uniform for
  $T_{\beta}$ where $\beta$ is in a small neighborhood of $\bbb$) we
  have
  \begin{multline*}
    \|\H_{n}- \frac{1}{\P^n \one} \big[P_n (\mean{n-1}{\phi} \P^{n-1} \one)
    +P_n P_{n-1} (\mean{n-2}{\phi} \P^{n-2} \one) + \dots \\ + P_n
    P_{n-1}\cdots P_{n-N} (\mean{n-N-1}{\phi} \P^{n-N-1}
    1)\big]\|_2<\frac{\eta}{20}
  \end{multline*}
  for all $n>N$.  Then $\|\psi_{n}\|_2> \frac{\eta}{10}$ for all $n>N$
  and we have linear growth of variance for the concatenation of maps
  as $\sigma_n^2=\sum_{k=1}^n E[\psi_n\circ\TF^k]^2$.
\end{proof}

\section{Random compositions of intermittent maps}\label{sec:quenched}

Suppose $S=\{ T_{\alpha_1}, \ldots, T_{\alpha_\ell } \}$ is a finite number
of intermittent type maps as in Section 1, with
$\alpha_i < \alphaupperbound$. We will take an iid selection of maps from
$S$ according to a probability vector $p=(p_1, \ldots, p_\ell)$ where the
probability of choosing map $T_{\alpha_i}$ is $p_i$. This induces a
Bernoulli measure $\nu$ on the shift space $\Omega:= \{ 1,\dots, l\}^{\N}$,
where $(i_1, i_2, \ldots, i_n, \ldots )$ corresponds to the sequence of
maps: first apply $T_{\alpha_{i_1}}$, then $T_{\alpha_{i_2}}$ and so on.
Writing elements of $\omega \in \Omega$ as sequences
$\omega:=(\omega_0,\omega_1,\ldots, \omega_n, \ldots)$ the shift operator
$S: \Omega \to \Omega$, $(S\omega)_{i}=\omega_{i+1}$ preserves the measure
$\nu$.

This random system also induces a Markov process on $[0,1]$ with the
transition probability function $P(x, A)=\sum_{i=1}^\ell p_{\alpha_i} 1_A
(T_{\alpha_i} (x))$. A measure $\mu$ is invariant for the Markov process if
$P^{*} \mu =\mu$. In this setting Bahsoun and Bose~\cite{Bahsoun_Bose} have
shown (among other results) that there is a unique absolutely continuous
invariant measure $\mu$ and that if $\phi: [0,1] \to \R$ is a H\"older
function then $\phi$ satisfies an {\it annealed} CLT for this random
dynamical system in the sense that if $\int \phi d~\mu=0$
then
\[
(\nu \times \mu) \{ (\omega, x): \frac{1}{n} \sum_{j=1}^n \phi (T_{(S^j\omega)_0} \ldots T_{(\omega)_0}  x) \in A\}
\rightarrow \frac{1}{{\sqrt{2\pi\sigma^2}}} \int_A e^{-\frac{x^2}{2\sigma^2}} dx
\]
for some $\sigma^2 \ge 0$.

In fact the result of Bahsoun and Bose~\cite{Bahsoun_Bose} also shows that
this convergence is with respect to $(\nu \times m)$ where $m$ is Lebesgue
measure on $[0,1]$. This follows from a well known result by Eagleson
\cite{EA} which states the equivalence of the convergence in distribution
for measures which are absolutely continuous with respect to each other.

We will strengthen this to a quenched result: almost every realization of
choices of concatenations of maps (with respect to the product measure
$\nu$), satisfies a self-norming CLT provided
$\phi$ is not a coboundary -- up to a constant -- for all maps (see the
precise statement below).
%
First we show that, in this situation, $\nu$ almost surely a random
composition of a finite number of intermittent type maps has linear growth
of the variance. Therefore, we can apply the CLT proven earlier.

\begin{lemma}
  Assume $\a_i < \frac{1}{4}$ for all $1\le i \le \ell$ and let
\[
\sigma_n^2 (\omega) : =\int \left(\sum_{j=1}^n \phi\circ \TF^j_\omega
-m(\phi\circ \TF^j_\omega)\right)^2~dx
\]
where $ \TF^j_\omega$ stands for $T_{(S^{j-1}\omega)_0} \circ \ldots \circ
T_{(\omega)_0}$.

If $ \phi$ is not a coboundary (up to a constant) for one of the maps, i.e.
\begin{quote}
  there exists an $i$ such that $\phi \not = c+\psi \circ T_{\alpha_i}
  -\psi$ for any measurable $\psi$ and any constant $c$
\end{quote}
then for $\nu$-almost every $\omega$ there exists a $C>0$ (independent of
$\omega$) and an integer $N(\omega)$ such that $\sigma_n^2(\omega) \ge Cn$
for all $n\ge N(\omega)$.
\end{lemma}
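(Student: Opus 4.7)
The plan is to adapt Theorem~\ref{thm:linear-variance} to the random setting by producing a uniform local approximation to the stationary picture and then averaging via Birkhoff's theorem. Fix an index $i^*\in\{1,\dots,\ell\}$ such that $\phi$ is not a coboundary for $T_{\alpha_{i^*}}$, set $\bbb:=\alpha_{i^*}$, and write $h_\bbb$, $\mu_\bbb$, $\tilde P_\bbb$ for the invariant density, invariant measure, and $\mu_\bbb$-transfer operator of $T_\bbb$. As in the proof of Theorem~\ref{thm:linear-variance}, the series $\tilde H_\bbb:=\sum_{k\geq1}\tilde P_\bbb^k[\phi-\mu_\bbb(\phi)]$ converges, giving the decomposition $\phi-\mu_\bbb(\phi)=\tilde\psi_\bbb+\tilde H_\bbb\circ T_\bbb-\tilde H_\bbb$ with $\|\tilde\psi_\bbb\|_2\geq\eta>0$.

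The central step would be a uniform approximation: for every $\varepsilon>0$ there is $M=M(\varepsilon)$ such that whenever $\omega_{n-2M}=\omega_{n-2M+1}=\cdots=\omega_n=i^*$ (a block of $2M+1$ consecutive occurrences of $i^*$ ending at position $n$), one has $\|\psi_n^\omega-\tilde\psi_\bbb\|_2\leq\varepsilon$. The proof mirrors Theorem~\ref{thm:linear-variance}: throughout the block every transfer operator equals $P_\bbb$, so $\P^j_\omega\one=P_\bbb^{\,j-(n-2M)}\,\P^{n-2M}_\omega\one$ for $n-2M\leq j\leq n+1$. Since $\P^{n-2M}_\omega\one$ lies in $\cone$ with unit $L^1$-mass regardless of the random prefix, and $P_\bbb$ contracts $\cone$ at a polynomial rate in $L^2$ by Proposition~\ref{prop:decay_Lp} (which applies because $\alpha_i<1/4$), one obtains $\|\P^j_\omega\one-h_\bbb\|_2=O(M^{-\tau})$ uniformly for $j\in[n-M,n+1]$ with some $\tau>0$, whence $c_j^\omega:=\int\phi\,\P^j_\omega\one\,dm$ is close to $\mu_\bbb(\phi)$. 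Plugged into formula~\eqref{eq:h_n}, the first $M$ terms of $\H_n^\omega$ involve only $P_\bbb$ and are $L^2$-close to the stationary truncation $G_{\bbb,M}$ of $\tilde H_\bbb$, while the tail $\sum_{k\geq M}P_n\cdots P_{n-k}(\mean{n-k-1}{\phi}\,\P^{n-k-1}_\omega\one)$ is $O(M^{1-\tau'})$ in $L^2$ uniformly in $\omega$ with $\tau'>1$, by Corollary~\ref{cor:C1*Pk-applications} (again using $\alpha_i<1/4$). The same estimates apply to $\H_{n+1}^\omega\circ T_\bbb$; combining with $\mean{n}{\phi}=\phi-c_n\approx\phi-\mu_\bbb(\phi)$ yields the claim.

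Taking $\varepsilon:=\eta/2$ then forces $\|\psi_n^\omega\|_2\geq\eta/2$ at every good index $n$. I would then apply Birkhoff's theorem to the ergodic Bernoulli system $(\Omega,\nu,S)$ with the indicator of the cylinder $[i^*,\ldots,i^*]$ of length $2M+1$: for $\nu$-a.e.\ $\omega$ the frequency of good indices $n\leq N$ converges to $p_{i^*}^{2M+1}>0$. Combining Lemma~\ref{lem:c} with the pointwise lower bound $\P^j\one\geq D_\alpha>0$ from~\cite[Remark~1.3]{lom-paper},
\[
\sigma_n^2(\omega)\;\geq\;D_\alpha\!\!\sum_{\substack{j\leq n\\ j\text{ good}}}\!\!\|\psi_j^\omega\|_2^2\;+\;O(1)\;\geq\;D_\alpha\,(\eta/2)^2\,\#\{j\leq n:\ j\text{ good}\}\;+\;O(1),
\]
which exceeds $Cn$ for $n\geq N(\omega)$ with $C:=\tfrac12 D_\alpha(\eta/2)^2\,p_{i^*}^{2M+1}$.

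The main obstacle is the uniform approximation step: one must control $\H_n^\omega$ when only the last $2M+1$ symbols of $\omega$ are prescribed while the entire earlier history is arbitrary. The decisive input is that the density $\P^{n-2M}_\omega\one$, however the random past has generated it, always sits in the fixed cone $\cone$ with unit $L^1$-mass; this is what ensures that the $M$ iterates of the single map $P_\bbb$ supplied by the block suffice to drag it $L^2$-close to $h_\bbb$, allowing the stationary coboundary identification of Theorem~\ref{thm:linear-variance} to survive the perturbation from the random prefix of $\omega$.
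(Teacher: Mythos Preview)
Your proposal is correct and follows essentially the same approach as the paper: both fix a map $T_\bbb$ for which $\phi$ is not a coboundary, build the stationary reference decomposition $\phi-\mu_\bbb(\phi)=\tilde\psi_\bbb+\tilde H_\bbb\circ T_\bbb-\tilde H_\bbb$, show that whenever the last $2M{+}1$ symbols equal $i^*$ the sequential $\psi_n^\omega$ is $L^2$-close to $\tilde\psi_\bbb$ (the paper does this via the decomposition $\H_{n+2k}=A(k,n)[B(k,n)+C(k,n)]$, you via the equivalent split into ``first $M$ terms'' and ``tail''), and then invoke the asymptotic frequency $p_{i^*}^{2M+1}$ of such blocks together with Lemma~\ref{lem:c} and the uniform lower bound on $\P^j\one$. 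Your identification of the ``decisive input''---that the random prefix enters only through $\P^{n-2M}_\omega\one\in\cone$ with unit mass, so a fixed number of $P_\bbb$-iterates suffices to bring it uniformly close to $h_\bbb$---is exactly the mechanism the paper exploits.
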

\begin{rmk}
  A similar cohomological condition was presented in~\cite{ALS} in the
  setting of two random commuting toral automorphisms, and conditions on
  the maps are given under which all $\phi\not=0$ have a linear rate of
  growth of variance.
\end{rmk}
\begin{proof}

  We will assume that $\phi$ is not a coboundary (up to constants) for one
  of the maps, suppose, without loss of generality, that this map is
  $T_{\alpha_1}$.

  Given any $k$, for $\nu$-a.e. $\omega$, the sequence of $m$ consecutive
  applications of the map $T_{\alpha_1}$ will occur in the sequence of
  composed maps prescribed by $\omega$ at a fixed asymptotic frequency of
  $p_1^m$.

  Now we consider $T_{\alpha_1}$ as a fixed map. $T_{\alpha_1}$ has an
  absolutely continuous invariant probability measure $\mu_{\alpha_1}$
  whose density~$h_{\alpha_1}$ is in the cone $\cone$. We let
  $Q_{\alpha_1}$ denote the transfer operator of $T_{\alpha_1}$ with
  respect to the invariant measure $\mu_{\alpha_1}$. Then
  $Q_{\alpha_1} \one=\one$, $P_{\a_1}h_{\alpha_1}=h_{\alpha_1}$.

  First we construct a martingale decomposition for $\phi$ using the
  transfer operator $Q_{\alpha_1}$ corresponding to the invariant measure
  $\mu_{\alpha_1}$ for $T_{\alpha_1}$. Note that $P_{\alpha_1}$ is the
  transfer operator of $T_{\alpha_1}$ with respect to Lebesgue measure $m$,
  so the relation between $P_{\alpha_1}$ and $Q_{\alpha_1}$ is
  $Q_{\alpha_1}(\phi)=\frac{1}{h_{\alpha_1}} P_{\alpha_1}(h_{\a_1}\phi)$,
  so
  $Q_{\alpha_1}^n (\phi)=\frac{1}{h_{\alpha_1}} P_{\alpha_1}^n
  (h_{\alpha_1} \phi)$ for all $n>0$. $Q_{\a_1}$ has the same decay rate as
  $P_{\a_1}$.

  Define
  \[
    H_{\alpha_1}=\sum_{j=1}^{\infty} Q_{\alpha_1}^j[\phi- \int \phi
    d\mu_{\alpha_1}] \qquad \text{and} \qquad \phi-\int \phi
    d\mu_{\alpha_1}=\psi_{\alpha_1} + H_{\alpha_1} -H_{\alpha_1}\circ
    T_{\alpha_1}.
  \]
  Although it will not be used, but note that
  $\{ \psi_{\alpha_1} \circ T_{\alpha_1}^n \}$ is a reverse martingale
  difference scheme with respect to $\mu_{\a_1}$ and the decreasing
  filtration $\mathcal{F}_n:=T_{\alpha_1}^{-n} \mathcal{B}$, where
  $\mathcal{B}$ is the $\sigma$-algebra of Borel sets on $[0,1]$.


  Since $\phi-\int \phi d \mu_{\a_1}=\psi_{\alpha_1} + H_{\alpha_1}
  -H_{\alpha_1}\circ T_{\alpha_1}$ and there are no measurable solutions to
  $\phi= c + H_{\alpha_1} -H_{\alpha_1}\circ T_{\alpha_1}$ with $c$
  constant, the martingale difference function $\psi_{\alpha_1}$ is not
  zero, so $\|\psi_{\alpha_1}\|_2>\rho>0$.\footnote{Unless explicitly
    stated, $L^2$ stands for $L^2(m)$, and conditional expectations are
    with respect to $m$.}

  Now we consider the analogous quantities defined by a concatenation of
  different maps, not just iterates of $T_{\alpha_1}$. We will use the
  notation from previous sections, so that
  $\P^n:=P_{\alpha_{i_n}}\circ P_{\alpha_{i_{n-1}}}\circ \cdots \circ
  P_{\alpha_{i_1}}$ for some sequence
  $\TF^n:=T_{\alpha_{i_n}}\circ T_{\alpha_{i_{n-1}}}\circ \cdots \circ
  T_{\alpha_{i_1}}$ (leaving out the dependence on $\omega$ for
  convenience).

  Defining as before
  \begin{equation}\nonumber 
    \H_n =\frac{1}{\P^n \one} \left[P_n (\mean{n-1}{\phi} \P^{n-1} \one) +P_n P_{n-1}
      (\mean{n-2}{\phi} \P^{n-2} \one)+ \dots + P_n P_{n-1} \dots P_1 (\mean{0}{\phi} \P^0
      \one)\right]
\end{equation}
and $\psi_n :=\mean{n}{\phi} +\H_n-\H_{n+1} \circ T_{n+1}$, the sequence
$\{ \psi_n\circ \TF^n\}$ is a reverse martingale difference scheme for $m$
and the decreasing filtration $\{\TF^{-n} \mathcal{B}\}$.

Our strategy is to show that if $k$ is sufficiently large (independent of
$n$) then $\|\psi_{n+2k}-\psi_{\alpha_1}\|_2<\frac{\rho}{2}$ and
$\|\psi_{n+2k+1}-\psi_{\alpha_1}\|_2<\frac{\rho}{2}$ every time that
$\psi_{n+2k}$ corresponds to the reverse martingale difference produced by
following any sequence of $n$ maps chosen from $S$ by $2k+1$ applications
of $T_{\alpha_1}$ (i.e., the last $2k+1$ maps applied were $T_{\a_1}$).

More precisely we will show that
$\|\H_{n+2k}-H_{\alpha_1}\|_2<\frac{\rho}{10}$ and
$\|\H_{n+2k+1}-H_{\alpha_1}\|_2<\frac{\rho}{10}$, which will imply
$\| \psi_{n+2k}-\psi_{\alpha_1}\|_2<\frac{\rho}{2}$ since
$\psi_{\alpha_1}-\psi_{n+2k}=[H_{\alpha_1}\circ
T_{\alpha_1}-H_{\alpha_1}]-[\H_{n+2k+1}\circ T_{\alpha_1}-\H_{n+2k}]$. The
proof that $\|\H_{n+2k+1}-H_{\alpha_1}\|_2<\frac{\rho}{10}$ is exactly the
same as the proof that $\|\H_{n+2k}-H_{\alpha_1}\|_2<\frac{\rho}{10}$, so
we only give details in the latter case. In fact, to simplify notation we
consider $2k$ applications of the $T_{\alpha_1}$ after $n$ applications of
any sequence of maps from $S$.

Once we have established this, by Lemma~\ref{lem:c},
\[
  \sigma_m^2 \approx \sum_{j=1}^{m}\E[\psi_j^2\circ \TF^j]=
  \sum_{j=1}^{m}\int\psi_j^2\cdot \P^j(\one) d m,
\]
and hence (since $\P^j(\one)$ is bounded away from zero) there is linear
growth as for any integer $r$, $r$ consecutive applications of
$T_{\alpha_1}$ will occur with an asymptotic frequency of $p_1^{r}$ for
$\nu$ a.e. $\omega$.

To set the stage for our estimates we make the assumption that $n$ maps
have been applied followed by $2k$ applications of $T_{\alpha_1}$ and write
\[
\H_{n+2k} =\frac{1}{\P^{n+2k} \one} [P_{n+2k} (\mean{n+2k-1}{\phi} \P^{n+2k-1} \one) +P_{n+2k} P_{n+2k -1}
  (\mean{n+2k -2}{\phi} \P^{n+2k-2} \one)
  \]
  \[
  + \dots + P_{n+2k} P_{n+2k-1} \dots P_1 (\mean{0}{\phi} \P^0 \one)]
  \]
as
\[
  H_{n+2k}=A(k,n)[B(k,n)+C(k,n)]
\]
where $A(k,n):=\frac{1}{\P^{n+2k} \one}$,
$B(k,n):=\sum_{j=0}^k P_{n+2k} P_{n+2k -1}\ldots P_{n+2k-j} (\mean{n+2k
  -j-1}{\phi} \P^{n+2k-j-1} \one)$ and
$C(k,n):=\sum_{j=k+1}^{n+2k-1}P_{n+2k} P_{n+2k -1}\ldots P_{n+2k-j}
(\mean{n+2k -j-1}{\phi} \P^{n+2k-j-1} \one)$.

Recall that
$\sum_{j=1}^k Q_{\alpha_1}^j [\phi-\int \phi d\mu_{\alpha_1}] =
\frac{1}{h_{\alpha_1}} [P_{\alpha_1} (h_{\alpha_1} \phi- h_{\alpha_1}\int
\phi h_{\alpha_1} dx) +P_{\alpha_1}^2 (h_{\alpha_1} \phi- h_{\alpha_1}\int
\phi h_{\alpha_1} dx)+ ...+ P_{\alpha_1}^k (h_{\alpha_1} \phi-
h_{\alpha_1}\int \phi h_{\alpha_1} dx)]$
converges to $H_{\alpha_1}$ at a polynomial rate in $L^2$.

We define $\alpha(k):=\frac{1}{h_{\alpha_1}}$ (which does not actually
depend on $k$),
$\beta(k):=P_{\alpha_1} (h_{\alpha_1} \phi- h_{\alpha_1}\int \phi
h_{\alpha_1} dx) +P_{\alpha_1}^2 (h_{\alpha_1} \phi- h_{\alpha_1}\int \phi
h_{\alpha_1} dx)+ ...+ P_{\alpha_1}^{k+1} (h_{\alpha_1} \phi-
h_{\alpha_1}\int \phi h_{\alpha_1} dx)$ and
$\gamma (k):=\sum_{j=k+2}^{\infty} P_{\alpha_1}^j
[h_{\alpha_1}\phi-h_{\alpha_1}\int \phi d\mu_{\alpha_1}]$.

We will show that as $k$ increases, uniformly in $n$,
$\|A(k,n)-\alpha(k)\|_2 \to 0$, $\|B(k,n)-\beta (k)\|_2 \to 0$,
$\|C(k,n)\|_2 \to 0$ and $\|\gamma (k)\|_2\to 0$. As
$H_{\alpha_1}= \alpha(k)[\beta (k) + \gamma (k)]$ and
$\H_{n+2k}=A(k,n)[B(k,n)+C(k,n)]$ this implies (because $A(k,n)$ and
$\a(k)$ are uniformly bounded in $L^\infty$) that
$\|\H_{n+2k}-H_{\alpha_1}\|_2\le \frac{\rho}{10}$ for sufficiently large
$k$.

We first consider the terms $A(k,n)$ and $\alpha(k)$.


For any $j$ and any sequence of $j$ maps
$T_{\alpha_{i_j}}\circ T_{\alpha_{_{j-1}}}\circ \cdots \circ
T_{\alpha_{i_1}}$ chosen from $S$ the corresponding transfer operator with
respect to Lebesgue measure
$\P^j=P_{\alpha_{i_j}}\circ P_{\alpha_{i_{j-1}}}\circ \cdots \circ
P_{\alpha_{i_1}}$ (again, we leave out the dependence on $\omega$ for
notational convenience) has the property that $ \P^j \one$ lies in the cone
$\cone$ and $\int \P^j \one dx=1$.

Furthermore for any $n$
\[
  P_{\alpha_1}^{2k} [h_{\alpha_1} -\P^n \one] \to 0 \text{ as } k\to\infty
\]
in $L^2$ at a uniform polynomial rate, in fact
$\|P_{\alpha_1}^{2k} [h_{\alpha_1} -\P^n \one] \|_2 \le
C\frac{1}{(2k)^{1+\eta}}$ where $C$ and $\eta$ are uniform over $\P^n 1$.

Hence $\frac{1}{P_{\alpha_1}^{2k}\P^n \one } \to \frac{1}{h_{\alpha_1}}$ in
$L^2$ at a polynomial rate as both $h_{\alpha_1}$ and
$P_{\alpha_1}^{2k}\P^n \one$ are uniformly bounded below by a positive
constant. Thus there exists $C_1>0$ such that for all $k$ and $n$
\[
  \| \frac{1}{P_{\alpha_1}^{2k}\P^n \one } - \frac{1}{h_{\alpha_1}}\|_2 \le
  C_1\frac{1}{(2k)^{1+\eta}}
\]
This is the same as
\[
  \| A(k,n) - \alpha(k)\|_2 \le C_1\frac{1}{(2k)^{1+\eta}}
\]

Now we consider $C(k,n)$ and $\gamma(k)$.

The terms $P_n P_{n-1}...P_{n-j}
(\mean{n-j-1}{\phi} \P^{n-j-1} \one)$ decay at a polynomial rate in $L^2$, in fact
$\|P_n P_{n-1}...P_{n-j}
(\mean{n-j-1}{\phi} \P^{n-j-1} \one)\|_2\le \frac{C}{j^{1+\eta}}$. Note that $C$ and $\eta$ may be taken as uniform over all choices of $T_{\alpha_i}$ in the concatenation.
Hence

\[
  \|C(k,n)\|_2 \le \frac{C_2}{(2k)^{\delta}}
\]
Similarly
\[
  \|\gamma (k) \|_2 \le \frac{C_3}{(2k)^{\delta}}
\]

Finally we consider the terms $B(k,n)$ and $\beta(k)$. Observe that
\[
  B(k,n)-\beta(k)=
  \sum_{j=0}^k P_{\a_1}^{j+1} \left[(\mean{n+2k -j-1}{\phi} \P^{n+2k-j-1}
    \one) -
    (h_{\alpha_1} \phi- h_{\alpha_1}\int \phi h_{\alpha_1} dx) \right]
\]
where the terms in square brackets have Lebesgue integral zero and are
``uniformly'' differences of functions in the cone $\cone$ (see
Corollary~\ref{cor:C1*Pk-applications}). Therefore
\[
   \left\|P_{\a_1}^{j+1} \left[(\mean{n+2k -j-1}{\phi} \P^{n+2k-j-1}
    \one) -
    (h_{\alpha_1} \phi- h_{\alpha_1}\int \phi h_{\alpha_1} dx)
  \right]\right\|_2 \le \frac{C}{j^{1+\delta}}.
\]
uniformly over $n$, $k$ and $j$.

Hence uniformly over $n$,
\[
  \|B(k,n)-\beta(k)]\|_2 \le \frac{C_2}{k^{1+\delta}}
\]

To summarize: we have shown that if we choose $k$ large enough then
$\| \H_{n+2k}-H_{\alpha_1}\|_2 <\frac{\rho}{10}$ and
$\| \H_{n+2k_1}-H_{\alpha_1}\|_2 <\frac{\rho}{10}$, hence
$\|\psi_{n+2k}\|_2>\frac{\rho}{2}$, whenever, independently of $n$, the
last $2k+1$ maps in the sequence are all $T_{\a_1}$.
This implies linear growth in the random composition setting as almost all
choices of maps will have $2k+1$ long sequences of the map $T_{\alpha_1}$
at a fixed frequency $p_1^{2k+1}$.
\end{proof}


The next theorem is an immediate consequence of the previous Lemma and
Theorem~\ref{thm:CLT}.

\begin{thm}\label{thm:CLTquenched}

  If $\alpha_i < \alphaupperbound$ for all $1\le i \le \ell$ and $ \phi$ is
  not a coboundary (up to constants) for one of the maps $T_{\a_i}$ then
  $\sigma_n^2(\omega)\ge C n$ for some $C>0$ and $n>N(\omega)$, and hence
  $\phi$ satisfies a CLT, for $\nu$ almost every sequence $\omega$ of maps.

\end{thm}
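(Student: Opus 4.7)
The plan is to combine the previous Lemma with Theorem~\ref{thm:CLT} in a straightforward way, so the bulk of the work has already been done. The previous Lemma establishes that under the assumed cohomological nondegeneracy, there exists a full $\nu$-measure set $\Omega_0 \subset \Omega$ and a constant $C > 0$ (independent of $\omega$) such that for every $\omega \in \Omega_0$ there is $N(\omega)$ with $\sigma_n^2(\omega) \ge Cn$ for all $n \ge N(\omega)$. This is precisely a linear lower bound, i.e.\ $\sigma_n^2(\omega) \gtrsim n^\beta$ with $\beta = 1$.

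First I would fix an arbitrary $\omega \in \Omega_0$. The choice of $\omega$ prescribes a specific sequential dynamical system with parameters $\beta_k := \alpha_{\omega_{k-1}}$ drawn from the finite set $\{\alpha_1, \dots, \alpha_\ell\}$. Setting $\alpha := \max_i \alpha_i$, the assumption $\alpha_i < \alphaupperbound$ for all $i$ gives $\beta_k \le \alpha < \alphaupperbound$ uniformly in $k$, so the structural hypothesis $0 < \alpha < \alphaupperbound$ of Theorem~\ref{thm:CLT} is satisfied by this sequential system.

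Next I would verify the variance hypothesis of Theorem~\ref{thm:CLT}: with $\beta = 1$, the condition $\beta > \frac{1}{2(1-2\alpha)}$ reduces to $\alpha < 1/4$, which is implied by $\alpha < \alphaupperbound = 1/9$. Hence Theorem~\ref{thm:CLT} applies along the sequence of maps indexed by $\omega$, yielding
\[
\frac{S_n}{\sigma_n(\omega)} \xrightarrow{d} \mathcal{N}(0,1)
\]
for this fixed $\omega$. Since $\omega \in \Omega_0$ was arbitrary and $\nu(\Omega_0) = 1$, the quenched CLT holds $\nu$-almost surely.

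The only conceivable obstacle is a measurability/uniformity concern: one wants the constant $C$ and the ``eventually'' threshold $N(\omega)$ to behave well enough that the conclusion makes sense pointwise $\nu$-a.e. But both are provided directly by the previous Lemma (the constant $C$ is uniform in $\omega$, and $N(\omega)$ is defined pathwise), so no additional argument is required. The finiteness of the set $S$ is what guarantees the uniform upper bound $\alpha < \alphaupperbound$ needed to invoke Theorem~\ref{thm:CLT}.
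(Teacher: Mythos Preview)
Your proposal is correct and matches the paper's own approach: the paper states that the theorem is an immediate consequence of the preceding Lemma (linear growth of $\sigma_n^2(\omega)$) together with Theorem~\ref{thm:CLT}, and you have simply spelled out the verification that $\alpha<\alphaupperbound$ and $\beta=1$ satisfy the hypotheses there.
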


\section{Appendices}

\subsection{{\Sprindzuk} Theorem.}\label{sec.galkoksma-appendix}

We recall the following result, as formulated by W. Schmidt~\cite{W1,W2}
and stated by Sprindzuk~\cite{Sprindzuk}\footnote{Quoting
  Sprindzuk~\cite{Sprindzuk}: ``The Lemma is abstracted from the work of
  W.~Schmidt, and is based on the idea of the well-known method of
  Rademacher in the theory of orthogonal series.''}:


\begin{thm}[\protect{\cite[page 45, Lemma 10]{Sprindzuk}}]\label{thm:sprindzuk}
    Let $(\Omega,\mathcal{B},\mu)$ be a probability space and let $f_k
    (\omega) $, $(k=1,2,\ldots )$ be a sequence of non-negative $\mu$
    measurable functions and $g_k$, $h_k$ be sequences of real numbers such
    that $0\le g_k \le h_k \le 1$, $(k=1,2, \ldots,)$. Suppose there exists
    $C>0$ such that
  \begin{equation} \nonumber 
    \int \left(\sum_{m<k\le n}( f_k (\omega) - g_k)
    \right)^2\,d\mu \le C \sum_{m<k \le n} h_k
  \end{equation}
  for arbitrary integers $m <n$. Then for any $\epsilon>0$
  \[
  \sum_{1\le k \le n} f_k (\omega) =\sum_{1\le k\le n} g_k + O
  (\Theta^{1/2} (n) \log^{3/2+\epsilon} \Theta (n))
  \]
  \text{for $\mu$-a.e. $\omega \in \Omega$,} where
  $\Theta (n)=\sum_{1\le k \le n} h_k$.
\end{thm}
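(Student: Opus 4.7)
My plan follows the classical Gal--Koksma/Schmidt strategy based on a dyadic decomposition of partial sums combined with the Rademacher--Menshov maximal-inequality trick. Set $S(m,n;\omega) := \sum_{m<k\le n}(f_k(\omega) - g_k)$; the hypothesis is exactly the $L^2$-bound $\|S(m,n)\|_{L^2(\mu)}^2 \le C(\Theta(n)-\Theta(m))$, valid for every pair $m<n$. I would first reduce to a geometric subsequence by choosing integers $N_j$ minimal with $\Theta(N_j)\ge 2^j$. Since $0\le h_k\le 1$, this gives $\Theta(N_j)=2^j + O(1)$. Applying Chebyshev at threshold $2^{j/2} j^{1/2+\delta}$ and Borel--Cantelli then yields $|S(0,N_j)|=O(\Theta(N_j)^{1/2}\log^{1/2+\delta}\Theta(N_j))$ almost surely, which already exceeds what is needed along the subsequence.

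The nontrivial step is controlling the partial sums \emph{between} consecutive $N_j$. For this I invoke the Rademacher--Menshov trick: every integer $n\in(N_j,N_{j+1}]$ is an endpoint of at most $r_j+1$ nested dyadic subintervals of $(N_j,N_{j+1}]$, where $r_j := \lceil \log_2 (N_{j+1}-N_j)\rceil \lesssim j$. Thus $S(N_j,n)$ is a sum of at most $r_j+1$ terms $S(I)$ drawn from the finite family $\mathcal I = \bigcup_{r=0}^{r_j} \mathcal I_r$, with $\mathcal I_r$ the collection of $2^r$ level-$r$ dyadic subintervals. Cauchy--Schwarz over the $r$-index gives
\[
\max_{N_j<n\le N_{j+1}} |S(N_j,n)|^2 \le (r_j+1)\sum_{r=0}^{r_j}\max_{I\in\mathcal I_r}|S(I)|^2 \le (r_j+1)\sum_{r=0}^{r_j}\sum_{I\in\mathcal I_r}|S(I)|^2.
\]
Integrating and applying the variance hypothesis level by level yields
\[
\int \max_{N_j<n\le N_{j+1}}|S(N_j,n)|^2\,d\mu \;\lesssim\; (r_j+1) \cdot C\bigl(\Theta(N_{j+1})-\Theta(N_j)\bigr) \cdot (r_j+1) \;\lesssim\; j^2\, 2^j,
\]
since at each level $\sum_{I\in\mathcal I_r}\|S(I)\|_2^2 \le C(\Theta(N_{j+1})-\Theta(N_j))=O(2^j)$ by disjointness.

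A second application of Chebyshev with threshold $2^{j/2} j^{3/2+\delta}$ and Borel--Cantelli then gives $\max_{N_j<n\le N_{j+1}}|S(N_j,n)|=O(2^{j/2} j^{3/2+\delta})$ almost surely. For general $n$ with $N_j<n\le N_{j+1}$, the triangle inequality $|S(0,n)|\le |S(0,N_j)|+|S(N_j,n)|$ together with $\Theta(n)\asymp 2^j$ then yields the claimed bound $S(0,n) = O(\Theta(n)^{1/2}\log^{3/2+\epsilon}\Theta(n))$ upon setting $\delta=\epsilon$.

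The main obstacle, in my view, is not the dyadic bookkeeping, which is purely combinatorial, but the passage from a per-interval $L^2$ variance bound to a maximal-function bound: the exponent $3/2$ is exactly the cost of this passage, arising as $1/2$ from Cauchy--Schwarz over the $\approx \log\Theta(n)$ dyadic scales, plus an extra $1+\epsilon$ to ensure summability in the Borel--Cantelli step along the geometric subsequence $N_j$. Without further structural hypotheses on the increments $f_k-g_k$ (such as orthogonality or mixing), this $\log^{3/2+\epsilon}$ exponent is essentially sharp, which is why it appears in Sprindzuk's statement.
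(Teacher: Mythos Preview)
The paper does not prove Theorem~\ref{thm:sprindzuk}; it is merely quoted from Sprindzuk~\cite{Sprindzuk} (who in turn attributes it to Schmidt~\cite{W1,W2}), with a footnote that the argument ``is based on the idea of the well-known method of Rademacher in the theory of orthogonal series.'' Your outline follows exactly this classical Gal--Koksma/Schmidt route, and its overall architecture --- geometric subsequence in $\Theta$, Rademacher--Menshov maximal bound, Borel--Cantelli --- is the right one.

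There is, however, a genuine gap in the execution. You set $r_j:=\lceil\log_2(N_{j+1}-N_j)\rceil$ and assert $r_j\lesssim j$. This would require $N_{j+1}-N_j\le C\,2^{Cj}$, but the hypotheses give only $\sum_{N_j<k\le N_{j+1}}h_k\approx 2^j$ with $0\le h_k\le 1$; nothing prevents many $h_k$ from being tiny (or zero), so $N_{j+1}-N_j$ can be arbitrarily large and your bound on the number of dyadic levels collapses. The standard remedy is to perform the dyadic subdivision in the \emph{weight} $\Theta$ rather than in the raw index: at level $r\in\{0,\dots,j\}$ cut $(N_j,N_{j+1}]$ at the first indices where $\Theta$ crosses $\Theta(N_j)+s\cdot 2^{j-r}$, $s=1,\dots,2^r$. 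This forces exactly $j+O(1)$ levels, and the per-level estimate $\sum_{I\in\mathcal I_r}\|S(I)\|_2^2\le C(\Theta(N_{j+1})-\Theta(N_j))$ survives by disjointness of the $\Theta$-increments. A second point your sketch does not address is that a general $n$ need not be one of these $\Theta$-cut-points; here the sign hypotheses $f_k\ge 0$, $0\le g_k\le h_k$ are actually used: if $n$ lies in a finest-level block $(a,b]$ with $\Theta(b)-\Theta(a)=O(1)$, monotonicity of $n\mapsto\sum_{a<k\le n}f_k$ gives $|S(a,n)|\le|S(a,b)|+(\Theta(b)-\Theta(a))=|S(a,b)|+O(1)$, which is harmless. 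With these two adjustments your argument goes through and yields the stated $\log^{3/2+\epsilon}$ exponent.
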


\subsection{\protect{Proof of Lemma
    \ref{lem:decay2}}}\label{sec.proof-in-appendix}

\begin{proof}
  For simplicity of notation we discuss only the case $k=1$; the
  general case is the same, since we use the $n$ Perron-Frobenius maps
  in $\P_k^n$ only for the decay given by Theorem~\ref{thm:decay}.

  The idea is to write $[\P^i \one \H_i \mean{i}{\phi}- \P^i \one m ((\mean{i}{\phi}
  \H_i)\circ\TF^i )]$ as a difference of $2i$ functions in the cone of
  the same integral. By writing explicitely $\H_i$ we get
 \[
 [\P^i \one \H_i \mean{i}{\phi}- \P^i \one m ((\mean{i}{\phi} \H_i)\circ\TF^i)] =
 \left[\sum_{k=1}^i\prod_{j=0}^{k-1}P_{i-j}(\mean{i-k}{\phi}\P^{i-k}\one )\mean{i}{\phi}-\P^i
   \one m((\mean{i}{\phi} \H_i)\circ\TF^i)\right]
 \]
 \[
 =\left[\sum_{k=1}^i\prod_{j=0}^{k-1}P_{i-j}(\mean{i-k}{\phi}\P^{i-k}\one)\mean{i}{\phi}-\P^i
   \one \sum_{k=1}^im((\mean{i}{\phi} \frac{1}{\P^i \one
   }\prod_{j=0}^{k-1}P_{i-j}(\mean{i-k}{\phi} \P^{i-1}\one )\circ
   \TF^i)\right]
 \]
 \[
 =\sum_{k=1}^i\left[\mean{i}{\phi} \P^k_{i-k+1}(\mean{i-k}{\phi}\P^{i-k}\one )-\P^i \one
   m((\mean{i}{\phi} \frac{1}{\P^i \one }\P^{k}_{i-k+1}(\mean{i-k}{\phi} \P^{i-1}\one)\circ
   \TF^i)\right]
\]
Call $C_{k,i}:=m((\mean{i}{\phi} \frac{1}{\P^i \one }\P^{k}_{i-k+1}(\mean{i-k}{\phi}
\P^{i-1}1)\circ\TF^i)$; then consider the quantity
 $$
 (*):=\mean{i}{\phi} \P^k_{i-k+1}(\mean{i-k}{\phi}\P^{i-k} \one)-\P^i \one C_{k,i}.
 $$
 Since $\mean{i-k}{\phi}\in C^1$ and $\P^{i-k}\one\in \cone$ we can write by
 Lemma~\ref{lem:C1*Pk}
$$
\mean{i-k}{\phi}\P^{i-k}\one=F_{i-k}-G_{i-k}
$$
with $F_{i-k}, G_{i-k}\in \cone.$ By the invariance of the cone, the functions $h^{(1)}_{i-k}:=\P^k_{i-k+1}F_{i-k}; \ h^{(2)}_{i-k}:=\P^k_{i-k+1}G_{i-k}$ are still in the cone, and we rewrite (*) as
$$
(*)= \mean{i}{\phi} h^{(1)}_{i-k}-\mean{i}{\phi} h^{(2)}_{i-k}-C_{i,k}\P^i \one.
$$
Although the functions (in the cone), $F_{i-k}, G_{i-k}$ are not of
zero mean, we can still apply Lemma~\ref{lem:C1*Pk} and split the
product of $\mean{i}{\phi}$ with them into the differences of two new
functions belonging to the cone, namely
$$
\mean{i}{\phi} h^{(1)}_{i-k}=M^{(1)}_{i-k}-M^{(2)}_{i-k}; \ \mean{i}{\phi}
h^{(2)}_{i-k}=N^{(1)}_{i-k}-N^{(2)}_{i-k}
$$
with $M^{(1,2)}_{i-k}, N^{(1,2)}_{i-k}\in \cone.$ We finally have
$$
(*)=[M^{(1)}_{i-k}+N^{(2)}_{i-k}]-[M^{(2)}_{i-k}+N^{(1)}_{i-k}+C_{i,k}\P^i \one ]:=R_{i,k}-S_{i,k}
$$
where the functions $R_{i,k}, S_{i,k}$ are in the cone and have the
same expectation. Before continuing, let us summarize what we got
$$
[\P^i \one \H_i \mean{i}{\phi}- \P^i \one m ((\mean{i}{\phi} \H_i)\circ\TF^i
)]=\sum_{k=1}^i (R_{i,k}-S_{i,k}).
$$
By taking the power $\P^n$ on both sides we have by our
Theorem~\ref{thm:decay} on the loss of memory and
Proposition~\ref{prop:decay_Lp}
$$
\|\P^n\left([\P^i \one \H_i \mean{i}{\phi}- \P^i \one m ((\mean{i}{\phi}
  \H_i)\circ\TF^i )]\right)\|_p\le \sum_{k=1}^i
C_{\alpha,p}(\|R_{i,k}\|_1+\|S_{i,k}\|_1)n^{-\frac{1}{p\alpha}+1}
\left(\log n\right)^{\frac{1}{\alpha}\frac{1-\a p}{p- \a p}}.
$$
From Lemma~\ref{lem:C1*Pk}, one observes that if we have $\phi\in
C^1([0,1])$ and $H\in \cone$ the splitting $\phi H=A-B$, with $A,B\in
\cone$ is such that the functions $A,B$ depend only on the $C^1$ norm
of $\phi$ and the integrals $m(H), m(\phi H).$ In our case since
$\mean{i}{\phi}(x)=\phi(x)-m(\phi\circ \TF^i)$, we have that
$\|\mean{i}{\phi}\|_{C^1}\le \|\phi\|_{C^1};$ moreover, at each application of
Lemma~\ref{lem:C1*Pk}, the function $H$ is either $\P^i \one $ or
obtained by applying $\P^\ell$ to a function obtained in the previous
step and which only depends upon $\|\phi\|_{C^1};$ in conclusion the
norms $\|R_{i,k}\|_1, \|S_{i,k}\|_1$ are bounded by a function
$C_{\phi}$ which only depends on the choice of the observable $\phi.$
We finally get
 $$
 \|\P^n\left(\P^i \one [\H_i \mean{i}{\phi}- m((\mean{i}{\phi} \H_i)\circ \TF^i)]
 \right)\|_p \le i\ C_{\alpha,p} \ C_{\phi} \ n^{-\frac{1}{p\alpha}+1}
 \left(\log n\right)^{\frac{1}{\alpha}\frac{1-\a p}{p- \a p}}.
 $$
\end{proof}

\section*{Acknowledgments}
SV was supported by the ANR-Project {\em Perturbations} and by the PICS
(Projet International de Coop\'eration Scientifique), {\em Propri\'et\'es
  statistiques des syst\`emes dynamiques d\'eterministes et al\'eatoires},
with the University of Houston, n. PICS05968. SV thanks the University of
Houston for supporting his visits during the preparation of this work. SV
thanks the Leverhulme Trust for support thorough the Network Grant
IN-2014-021, and for useful conversations with S. Galatolo and W. Bahsoun.
MN was supported by NSF grant DMS 1101315 and Simons Foundation
Collaboration Grant Number 349664. AT was partially supported by the Simons
Foundation grant 239583. The authors warmly thank R. Aimino for useful
suggestions concerning Theorem~\ref{thm:SBC}. All three authors are
grateful for the support received from the Erwin Schr\"odinger Institute in
Vienna during their May 2016 visit.

\end{document}